\documentclass{article}

\usepackage[letterpaper,top=2cm,bottom=2cm,left=3cm,right=3cm,marginparwidth=1.75cm]{geometry}\hfuzz=4pt
\usepackage[T1]{fontenc}
\usepackage{lmodern}
\usepackage{textcomp}
\usepackage{graphicx}
\usepackage{amsfonts,amsmath,amssymb,amsthm,amscd}
\usepackage{mathtools}
\usepackage{tikz}
\usetikzlibrary{decorations.pathmorphing,patterns,arrows.meta}
\usepackage{soul}
\usepackage{xcolor}
\usepackage{comment}
\usepackage{url}
\usepackage{bm}
\usepackage[toc,page]{appendix} 
\usepackage{fancyvrb} 
\usepackage{mathtools,nccmath}
\usepackage{cancel} 
\usepackage{enumitem} 
\usepackage[normalem]{ulem} 
\usepackage{fullpage}
\usepackage{dsfont} 

\usepackage[pdfencoding=auto,psdextra]{hyperref} 
\hypersetup{
  colorlinks   = true, 
  urlcolor     = black, 
  linkcolor    = blue, 
  citecolor   = blue 
}

\usepackage{booktabs} 
\usepackage{makecell}

\newcommand{\R}{\mathbb{R}}
\newcommand{\eps}{\varepsilon}

\newcommand{\littleO}{o}
\newcommand{\bigO}{O}

\theoremstyle{definition}
\newtheorem{defi}{Definition}[section]

\theoremstyle{remark}
\newtheorem{rem}[defi]{Remark}
\usepackage{etoolbox} 
\AtEndEnvironment{rem}{\qed}
\AtEndEnvironment{example}{\qed}

\theoremstyle{plain}
\newtheorem{theorem}[defi]{Theorem}
\newtheorem{lemma}[defi]{Lemma}
\newtheorem{corollary}[defi]{Corollary}
\newtheorem{prop}[defi]{Proposition}

\def\E{\mathbb{E}}

\def\Re{\mathrm{Re}}
\def\tr{\mathrm{Tr}}

\title{
Realisation of constraints in underdamped Langevin dynamics}
\author{Carsten Hartmann\thanks{Institut f\"ur Mathematik, Brandenburgische Technische Universit\"at Cottbus-Senftenberg, Konrad-Wachsmann-Allee 1, D-03046 Cottbus, Germany.\ Email:\href{mailto:carsten.hartmann@b-tu.de}{carsten.hartmann@b-tu.de}},  
Lara Neureither\thanks{Institut f\"ur Mathematik, Brandenburgische Technische Universit\"at Cottbus-Senftenberg, Konrad-Wachsmann-Allee 1, D-03046 Cottbus, Germany.\ Email:\href{mailto:neurelar@b-tu.de}{neurelar@b-tu.de}}, 
Upanshu Sharma\thanks{School of Mathematics and Statistics, University of New South Wales, Sydney 2052, Australia.\ Email:\href{mailto:upanshu.sharma@unsw.edu.au}{upanshu.sharma@unsw.edu.au}}}

\allowdisplaybreaks
\usepackage{tocloft}

\begin{document}

\newpage

\maketitle

\begin{abstract}
This article deals with the realisation of constraints in underdamped Langevin dynamics via soft-constrained dynamics. Specifically, we study systems with a large (or small) parameter that controls the constraint mechanisms, e.g. the strength of confinement forces, mass or friction coefficients, and we derive quantitative convergence results for both the constrained variables and the softly constrained dynamics on the limiting subspace. The latter can be either a spatial or a momentum or velocity subspace, depending on the underlying soft constraint mechanism; in this paper we treat only holonomic constraints, i.e.~all momentum- or velocity-level constraints are integrable. We explicitly include the initial conditions so that it is clear whether they must satisfy the constraint or not in order to realise the desired constrained dynamics. We discuss the implications of these results as well as questions related to the sampling of the corresponding conditional probability measures.  
\end{abstract}

\tableofcontents

\section{Introduction}


The realisation of constraints is a classical topic in mechanics; see \cite[Ch.~1.6]{arnoldBook} and the references therein. This is due to the fact that constraints are often understood as idealisations of some underlying physical mechanism that is possibly too complicated to be modelled in detail. For example, the relatively stiff bases Adenine, Guanine, Cytosine, and Thymine of which DNA is composed are often modelled as rigid bodies; rolling of a body on a solid surface with friction can be modelled as rolling without slipping, etc. In molecular dynamics simulations, bond constraints are routinely used as an approximation of stiff atomic bonds, with the aim of increasing the stable numerical time step \cite{barth1995algorithms,Leimkuhler-Matthews-MD-book,manning2025laplace}. Constraints also play a role in Markov Chain Monte Carlo (MCMC) for the sampling of statistical distributions with complex geometry \cite{girolami2011riemann,diaconis2013sampling,goodman-submanifold}, for the computation of conditional expectations \cite{carter1989constrained,CiccottiKapralEijnden05,hartmann2008ergodic}, the training of deep neural networks \cite{leimkuhler2022better,bellavia2025fast,kong2022momentum}, or the efficient computation of free energy profiles \cite{sprik1998free,den1998calculation,LelievreRoussetStoltz10}.

Historically, constrained mechanical systems have been classified into holonomic and non-holonomic systems where the former can be regarded as a special case of the latter, in that holonomic constraints impose restrictions on the positions (assuming that the velocities stay in the corresponding tangent space) whereas non-holonomic constraints impose restriction on positions \emph{and} velocities in such a way that the restriction is non-integrable (i.e.~the space of admissible velocities is not the tangent space of the configuration space).  
While holonomically constrained mechanical systems behave essentially like unconstrained mechanical systems on a lower-dimensional state space or phase-space, there can be subtle differences when it comes to the numerical discretisation; see e.g. \cite{leimkuhler1994symplectic} or \cite[Ch.~VII.1]{hairer2006geometric} and references given there. 

Another aspect is the realisation of holonomic constraints by actual physical mechanisms, such as  strong confining forces due to a stiff spring or large friction. This is sometimes called a soft or real constraint, in contrast to the ideal constraint that is imposed on the equations of motion using Lagrange multipliers or by switching to local coordinates \cite[Ch.~3.6]{gallavotti2013elements}. Depending on the forcing mechanism and the initial conditions, the resulting constrained dynamics may contain extra terms that are not present in an ideally constrained system. In the absence of resonances, these extra terms can often be expressed in terms of adiabatic invariants. We refer to \cite{RubinUngar57,bornemann1997homogenization,takens2006motion} for an in-depth discussion of this aspect. 

\paragraph{Relevant previous work}

While the realisation of holonomic constraints is now well understood for deterministic mechanical systems, the field of constrained stochastic dynamical systems is less developed. For deterministic systems, convergence of the dynamics with soft constraints to a system with hard (also ``real'' or ``ideal'') constraints in the limit of infinitely strong forcing can be often based on energy arguments \cite{benettin1989realization,kozlov1990realization} and homogenisation theory \cite{bornemann1997homogenization,froese2001realizing}. 
Related arguments have been exploited in the numerical analysis of highly oscillatory Hamiltonian systems, e.g.~\cite{LeBrisLegoll10,DobsonLeBrisLegoll13} and stochastic differential equations (SDEs) \cite{tao2010nonintrusive}, with the aim of eliminating the stiff parts from the equations of motion and to allow for larger time steps. 

Stochastic dynamics driven by Brownian motion is in general lacking the passivity and stability of Hamiltonian systems, due to the nature of the Brownian motion having unbounded total variation. As a consequence, the stochastic limit dynamics may show other features than its deterministic counterpart. 
For dissipative (``overdamped'') Langevin dynamics, the problem of realising holonomic constraints by adding strong gradient forces is now relatively well understood, starting from the works by Hinch \cite{hinch1994brownian} and Öttinger \cite{ottinger1994brownian} on rigid polymers and including more recent works, such as \cite{marbach2026brownian,waszkiewicz2025trimer} on Langevin equations with space-dependent mobility; convergence results for overdamped Langevin equations with constant diffusion coefficient can be found in, e.g. \cite{projection_diffusion}.

Results for underdamped Langevin equations or general SDEs are rarer than for the overdamped case. Langevin equations with hard (holonomic) constraints have been studied in Lagrange multiplier formulation \cite{vanden2006second,leimkuhler-constraint-regularization-nn}, within the so-called impetus-striction framework \cite{WalterHartmannMaddocks11}, or its dual formulation -- the Dirac bracket framework \cite{LelievreRoussetStoltz12}. For further results on numerical schemes for  non-reversible perturbations of overdamped Langevin systems on manifolds, see \cite{SharmaZhang21,zhang2022efficient}. 
To our knowledge, limits of underdamped systems under soft constraints have not been systematically studied yet. An exception is the paper \cite{Reich00} that analyses highly oscillatory Langevin equations using action-angle formulations and formal asymptotic expansions, drawing on similar arguments for deterministic Hamiltonian systems \cite{schuetteBornemann1997} and nearly integrable stochastic Hamiltonian systems \cite{kifer2001stochastic}. It has been conjectured, using statistical equipartition arguments, that the system under soft constraints converges to a system with hard constraints and an additional correction potential (``Fixman potential'') that depends on the noise coefficient, but that is independent of the initial conditions; see also \cite[Ch.~3.4]{Hartmann2007}. This should be contrasted with the deterministic case, in which the limit leads to a constrained system with a correction potential that depends on the initial data. It turns out, as we will discuss below, that the correction potential in the underdamped Langevin case may also depend on the initial data, depending on the physical mechanism by which the constraint is realised. Moreover, it is not always of the asserted ``Fixman'' form.

We mention two studies on soft constrained limits that do not fall under the Langevin category. The seminal work \cite{Katzenberger91} by Katzenberger features general results on soft constraints for semi-martingales; this work shows how SDEs on manifolds can be represented as limits of randomly perturbed dynamical systems with an asymptotically stable manifold. Yet, the proofs are non-constructive, in that the limit dynamics is not explicitly characterised, and provides no quantitative estimates. Quantitative   estimates for linear SDEs with soft constraints, with an explicit algebraic characterisation of the constrained limit SDEs and their steady states (or: invariant measures) are given in \cite{HartmannNeureitherSharma25a}.

\paragraph{Contributions of this article} 

Here we study and compare two different classes of physical mechanisms for the realisation of (holonomic) constraints for underdamped Langevin equations. The first class comprises constraint realisation by adding strong confinement terms to the system Hamiltonian that penalises deviations from the constraint, in both configuration and phase space. 
The second class of approaches is based on studying the physical parameter limits of small and large masses and the closely related limit of large damping. While configuration space (``spatial'') confinement is the traditional approach in classical mechanics (e.g. \cite{RubinUngar57,bornemann1997homogenization}) there are few works on imposing hard phase-space constraints on Langevin dynamics (e.g.~\cite{WalterHartmannMaddocks11,LelievreRoussetStoltz12,DLPSS18}), and we are not aware of any systematic accounts of phase-space confinement for Langevin equations (that suppresses the momentum oscillations of the spatial confinement); the same goes for the physical parameter limits that have a solid theoretical foundation for deterministic Hamiltonian systems (see \cite{arnoldBook} are the references therein), but  to our knowledge, have not been studied in the context of underdamped Langevin dynamics.   

Following our recent work~\cite{HartmannNeureitherSharma25a}, we confine ourselves to constraints that can be written as coordinate projections. Under suitable Lipschitz and growth conditions on the conservative part of the (nonlinear) drift terms, we present a complete quantitative analysis of the underdamped Langevin dynamics for all the aforementioned soft constraint limits, including explicit convergence rates as functions of the time horizon and the initial data. We moreover include a quantitative analysis of the orthogonal (i.e.~constrained) variables, akin to the analysis in~\cite[Appendix C]{projection_diffusion} for the overdamped Langevin equation.

\paragraph{Guiding example} 
We will now illustrate the different physical mechanisms that can be used to realise constraints on Langevin systems, and discuss connections with the relevant approaches for deterministic mechanical systems. To this end, we consider the following example that has been adapted from the classical mechanics textbook \cite[Ch.~1.6.1]{arnoldBook}: a linear SDE for $Q=(Q^1,Q^2)$ and $P=(P^1,P^2)$ that models a system of two bodies on the real axis. The two bodies are coupled by springs where one mass is attached to a wall (by a spring), and one mass is subject to damping and noise (see Figure \ref{fig:2bodyexample}):    
\begin{equation}\label{eq:2bodyexample}
    \begin{aligned}
        \begin{aligned}
            dQ^1_t & = M^{-1}P^1_t\,dt\\
            dQ^2_t & = m^{-1}P^2_t\,dt\\
            dP^1_t & = -\left(k Q^1_t + \alpha(Q^1_t-Q^2_t)\right) dt\\
            dP^2_t & = -\left(\alpha(Q^2_t-Q^1_t) + \gamma m^{-1}P^2_t \right) dt + \sigma dW_t\,.
        \end{aligned}
    \end{aligned}
\end{equation}
Here $W_t$ is a standard one-dimensional Brownian motion.
The system is an underdamped Langevin equation with quadratic Hamiltonian
\[
H(Q,P) =  \frac{1}{2}Q^TKQ + \frac{1}{2}P^\top G^{-1}P
\]
where the mass and stiffness matrices
\[
G = \begin{pmatrix}
    M & 0 \\ 0 & m  
\end{pmatrix}\,,\quad K = \begin{pmatrix}
    k+\alpha & -\alpha \\ -\alpha & \alpha  
\end{pmatrix}
\]
are easily seen to be positive definite, which implies that the Hamiltonian is strictly convex and bounded below by zero for any choice of $m,M,\alpha,k>0$.

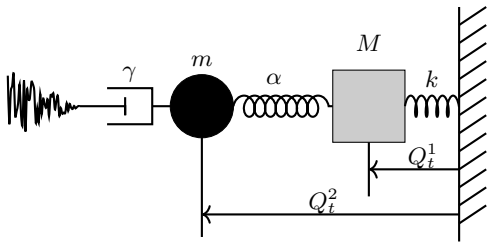
\begin{figure}[ht!]
    \centering
\begin{tikzpicture}[scale=1.2, every node/.style={font=\small}]

  \draw[thick] (7,1.1) -- (7,-1.5);
  \foreach \y in {-1.3,-1.1,...,0.9}
    \draw[thick] (7,\y) -- ++(0.3,0.2);

  \draw[fill=black!20] (5.6,-0.4) rectangle (6.4,0.4);
  \node at (6,0.7) {$M$};

  \draw[thick,decorate,decoration={coil,aspect=0.3, segment length=4pt, amplitude=4pt}]
        (6.4,0) -- (7,0);
  \node at (6.7,0.3) {$k$};

  \draw[thick] (2.8,0) -- (3.3,0) -- (3.3,-0.1) -- (3.3, 0.1);
  \draw[thick] (3.1,0.2) -- (3.6,0.2) -- (3.6,-0.2) -- (3.1,-0.2);
  \draw[thick] (3.6,0) -- (4.5,0);
  \node at (3.35,0.35) {$\gamma$};
  \draw[thick,decorate,decoration={coil,aspect=0.75, segment length=4pt, amplitude=4pt}] (4.5,0) -- (5.6,0);
  \node at (4.95,0.3) {$\alpha$};

      \draw[thick] plot [domain=2:2.8,samples=50,smooth] (\x,{0.5*(\x-2.8)*rand});

  \draw[fill=black] (4.15,0) circle (0.35);
  \node at (4.15,0.5) {$m$};

  \draw[thick] (4.15,-0.25) -- (4.15,-1.45);
  \draw[->, thick] (7,-1.2) -- (4.15,-1.2) node[anchor=north] {};
  \node at (5.5,-1.05) {$Q^2_t$};

  \draw[thick] (6,-0.4) -- (6,-1.0);
  \draw[->, thick] (7,-0.7) -- (6,-0.7) node[anchor=north] {}; 
  \node at (6.6, -0.55) {$Q^1_t$};

\end{tikzpicture}
    \caption{Two coupled point masses subject to friction and noise}
    \label{fig:2bodyexample}
\end{figure}

Let us consider the case that the spring that is attached to the wall becomes infinitely stiff, i.e. we take the limit $k\to\infty$. We start by considering the deterministic case and suppose that $\gamma=\sigma=0$. It can be shown (e.g. \cite{RubinUngar57}) that 
$Q^1_t=\mathcal{O}(1/k)$, moreover the limit motion of the second body is a harmonic oscillation of frequency $\omega=\sqrt{\alpha/m}$, provided that the initial conditions of the full system  satisfy $Q^1_0=P^1_0=0$. This implies that the limit dynamics agree with the constrained dynamics $\dot{q}=p/m$, $\dot{p}=-\alpha q$ under the constraint $Q^1=0$ (which implies the velocity constraint $\dot{Q}^1=P^1/M=0$). The total energy of the limit system,
\[
\bar{H}(q,p) = \frac{\alpha}{2}q^2 + \frac{1}{2m}p^2  \,,
\]
equals the total energy of the original system under the initial conditions $Q^1_0=0$ and $P^1_0=0$. (Note that the total energy is preserved under the deterministic dynamics.)  

Now, if $\gamma>0$ and $\sigma=\sqrt{2\gamma\beta^{-1}}$ for some $\beta>0$, then the law of the dynamics at time $t$ converges to a unique Gaussian probability measure with density $\rho\propto\exp(-\beta H)$, independently of the initial conditions. In the limit $k\to \infty$, the Gaussian  measure converges weakly to a degenerate Gaussian with density $\rho|_{Q^1=0}$, and therefore we expect the limit dynamics to be a three-dimensional system for $(Q^2,P^1,P^2)$, rather than the two-dimensional one for $(Q^2,P^2)$ as in the deterministic case. Precise statements will be given in the next section. 

The situation is different  for the noisy system if we let simultaneously $k\to\infty$ and $M\to 0$, in which case $(Q^1_t,P^1_t)\to 0$ for any $t>0$ and independently of the initial conditions. The joint Gaussian invariant measure thus becomes singular in both $Q^1$ and $P^1$ and weakly converges to a degenerate Gaussian with the non-degenerate part having the density $\bar{\rho}\propto \exp(-\beta \bar{H})$. This Gaussian density turns out to be the unique stationary density of the resulting limit dynamics
\begin{equation}\label{eq:2bodyexamplePhaseSpace}
    \begin{aligned}
        \begin{aligned}
            dq_t & = m^{-1}p_t\,dt\\
            dp_t & = -\left(\alpha q_t + \gamma m^{-1}p_t \right) dt + \sigma dW_t\,,
        \end{aligned}
    \end{aligned}
\end{equation}
which suggests that the approximation of in $(Q^2_t,P^2_t)$ by the constrained dynamics $(q_t,p_t)$ is uniform in time. 

An alternative mechanism to realise a holonomic (or even a non-holonomic) constraint is by increasing friction \cite{kozlov1990realization,eldering2016realizing}. When we add a term $-c P^1_t$ to the third equation in (\ref{eq:2bodyexample}) and take the limit $c\to\infty$, it is easy to see that $P^1_t=\mathcal{O}(e^{-ct})$ for every $t>0$. 
Therefore, by integrating the first equation,  it follows that $|Q^1_t-Q^1_0| = \mathcal{O}(1/c)$ on every finite time interval. As a consequence, $Q^1_t$ hardly departs from its initial value, which implies that $Q^1$ becomes an integral rather than a constrained variable. The resulting limit dynamics  
\begin{equation}\label{eq:2bodyexampleHF}
    \begin{aligned}
        \begin{aligned}
            dq_t & = m^{-1}p_t\,dt\\
            dp_t & = -\left(\alpha (q_t - Q^1_0) + \gamma m^{-1}p_t \right) dt + \sigma dW_t\,,
        \end{aligned}
    \end{aligned}
\end{equation}
agrees with (\ref{eq:2bodyexamplePhaseSpace}) if and only if $Q^1_0=0$. For $\gamma=\sigma=0$ the dynamics is Hamiltonian with the energy
\[
\tilde{H}(q,p;Q^1_0) =  \frac{\alpha}{2}(q-Q^1_0)^2 + \frac{1}{2m}p^2 \,.
\]
Note, however, that $\tilde{H}(q,p;\xi)\neq H(\xi,q,0,p)$. If $\gamma>0$ and $\sigma=\sqrt{2\gamma\beta^{-1}}$, then the limit dynamics has a unique Gaussian invariant measure with density $\tilde{\rho}\propto\exp(-\beta \tilde{H})$, which has no obvious relation to the original Gaussian density $\rho\propto\exp(-\beta H)$ in that it cannot be obtained by marginalising over or conditioning on $Q^1$. 

Interestingly, equation (\ref{eq:2bodyexampleHF}) can also be formally derived from (\ref{eq:2bodyexample}) by sending the mass of the first particle to infinity. Indeed, it is easy to see that $Q^1_t=\mathcal{O}(1/M)$ as $M\to\infty$. Yet, there is no damping acting on the first particle that would force $P^1_t\to 0$, and it turns out that the limit dynamics comprises $Q^2,P^2$ and $P^1$. This has consequences for the invariant measure as 
\[
\lim_{M\to\infty} \exp\left(-\beta H(Q^1,Q^2,P^1,P^2)\right) = \exp\left(-\beta\left(\frac{1}{2}Q^\top K Q + \frac{1}{2m}|P^2|^2 \right)\right)
\] 
where the right-hand side not only is not invariant under the limit dynamics, but also not a normalisable probability density function in the variables $Q^2,P^2$, and $P^1$. (The marginal distribution in $P^1$ tends to a Gaussian with infinite variance.) Physically, an infinitely large mass $M$ has the effect of preventing the corresponding particle with position $Q^1_t$ to be moved away from its initial value $Q^1_0$. Clearly $\dot{Q}^1_t\to 0$ as $M\to\infty$, still this does not imply that $P^1_t\to 0$, since $P^1_t=M\dot{Q}^1_t$ with $M\to\infty$. The large mass limit has connections to what is known as vakonomic mechanics and the method of adjoint masses, which we will briefly discuss at the end of this paragraph (see Remark \ref{rem:adjointMasses} below).  

Rather than preventing $Q^1_t$ to move away from $Q^1_0$ by making it infinitely heavy, we can also consider the small-mass limit that is in some sense dual to the former (see \cite{maddocks1995unconstrained}). 
Sending $M$ to zero will have the effect of making the first particle oscillate at a high frequency $\tilde{\omega}=\mathcal{O}(1/\sqrt{M})$ as $M\to 0$. While this implies that $P^1_t$ converges to a zero-mean Gaussian with variance $M$ as $t\to\infty$, the effect on $Q^1_t$ on finite time intervals is not obvious. Under suitable conditions, $Q^1_t$ undergoes an overdamped diffusion as will be discussed below. For deterministic Hamiltonian systems, the small mass limit has links to port-Hamiltonian descriptor systems \cite{beattie2018linear}, but also to Dirac's theory of constraints as has been pointed out in \cite{deryabin2000dirac}.   

\begin{rem}\label{rem:adjointMasses}
    The idea of realising constraints by friction can be generalized; see, e.g.~\cite{kozlov1990realization}: if a term $-c \bm{M}^{-1}A^\top A \bm{M}^{-1} P_t$  is added to the equation where $A=A(Q)$ is assumed to have a  non-trivial kernel and $\bm{M}$ is a  symmetric positive definite mass matrix, then the limit dynamics as $c\to\infty$ will evolve under the constraint $A(Q_t)\dot{Q}_t=0$; depending on whether $A$ can be written as the Jacobian of some function $\xi$ or not, this gives rises to a holonomic constraint  $\xi(Q)=0$ or a non-holonomic constraint $A(Q)\dot{Q}=0$. 

    The high friction and large mass limits have strong connections to the theory of Vakonomic mechanics \cite[Ch.~1.4]{arnoldBook} and the impetus-striction formulation of constrained mechanical systems \cite{maddocks1995unconstrained,dichmann1996hamiltonian}, which both lead to formulations with degenerate Hamiltonians of the form
    \[
    \mathcal{H}(Q,P) = H(Q, \mathcal{P}_Q P)\,,\quad \mathcal{P}_Q = I - A^\top(A \bm{M}^{-1}A^\top)^{-1}A\bm{M}^{-1}
    \] 
    where $A=\nabla\xi$ in case of a holonomic constraint of the form $\xi(Q)=0$. Generalisations of the impetus-striction formulation for underdamped Langevin equations can be found in \cite{latorre2010free,WalterHartmannMaddocks11}.
\end{rem}

\paragraph{Outline} The rest of the paper is organised as follows: The next Section \ref{sec:mainresults} introduces the Langevin set-up considered in this paper, the main results and underlying assumptions. Section \ref{sec:InfConf} analyses the realisation of constraints by strong confinement, whereas Section \ref{sec:PhysicalLimits} is devoted to a discussion of friction and mass parameter limits (``physical parameter limits''). Possible generalisations of the results are discussed and contextualised in Section \ref{sec:discussion}. The main proofs are deferred to the  Appendix that also records a couple of auxiliary results and examples for the spatial confinement case.

\section{Main results}\label{sec:mainresults}

We briefly introduce the set-up of this paper and discuss the main finding of this work. To this end, we introduce the underdamped Langevin equation (we will drop the prefix underdamped from here onwards)
\begin{align}\label{eq:intro-genLang}
    \begin{aligned}
    dQ_t &= \nabla_p H(Q_t,P_t)dt\\
    dP_t &= -\nabla_q H(Q_t,P_t)dt - \gamma \nabla_p H(Q_t,P_t)dt + \sqrt{2\gamma\beta^{-1}}dW_t,
    \end{aligned}
\end{align}
where $(Q_t,P_t)\in \R^{d}\times \R^d$ are the position and momenta, $\gamma>0$ is the friction coefficient, $\beta$ is the inverse temperature, and $W_t$ is the standard Brownian motion in $\R^d$. Here $H:\R^{2d}\to\R$ is the \emph{Hamiltonian} 
\begin{equation*}
    H(Q,P)=V(Q) + \frac{1}{2}|P|^2
\end{equation*}
where, for simplicity, we have assumed that the particles have unit mass, and $V:\R^d\to \R$ is the potential. The Langevin equation is often the canonical choice for sampling the Boltzmann density $\rho=Z^{1}\exp(-\beta H)$, where $Z$ is a normalisation constant that makes $\rho$ a probability density function. 
Now let 
\begin{equation*}
    \xi:\R^d\to \R^k, \ \ Q\mapsto \xi(Q),
\end{equation*}
be a smooth function, with regular value $0\in\R^k$, such that the level set $\xi^{-1}(0)=\{Q\in\R^d\colon \xi(Q)=0\}$ is a smooth submanifold of codimension $k$ in $\R^d$. We call  $\xi^{-1}(0)$ the constraint manifold.

A key challenge in this context is to sample the Boltzmann distribution conditioned on the level sets of $\xi$, for example to compute free energy profiles using thermodynamic integration or so to disintegrate an otherwise intractable probability measure. 
In numerical simulations, this challenge is often addressed by adding a confining force to the potential (e.g. see \cite{kastner2011umbrella}), considering 
a Hamiltonian of the form
\begin{equation}\label{eq:stuff-Ham-spat}
    H(Q,P) = V(Q) + \frac{1}{2\eps} | \xi(Q)|^2 + \frac12 |P|^2,
\end{equation}
where $0< \eps \ll 1$. The idea here is that the confinement potential $(2\eps)^{-1}|\xi|^2$ forces the dynamics to the zero level set of $\xi$ as $\eps\to 0$. Clearly, if the constraint is of the form $\xi^{-1}(a)\subset\R^d$, we can replace the confinement potential by $(2\eps)^{-1}|\xi(Q)-a|^2$, assuming that $a\in\R^k$ is a regular value of $\xi$. Nevertheless, we focus on the choice $a=0$ in this article since $a\neq 0$ does not change the fundamental behaviour of what is going to follow.

\paragraph{Strong confinement of Langevin dynamics}

In this paper we study the $\eps\to 0$ limit of the Langevin dynamics with stiff Hamiltonian \eqref{eq:stuff-Ham-spat}, and show that the corresponding momenta oscillate in the $\eps\to 0$ limit (see Appendix~\ref{app:oscP1} for a simple example). This behaviour is reminiscent of oscillating momenta in the deterministic, purely Hamiltonian setting that can be treated by weak convergence techniques \cite{bornemann1997homogenization,klar2021second}. While the derived limiting Langevin dynamics does sample the correct steady state on the level sets of $\xi$ (cf.~the introductory example), the oscillating momentum implies that the stochastic dynamics itself never stabilizes. 
To deal with this issue, in this paper we study a second, so-called \emph{phase-space} constraint which corresponds to a Hamiltonians of the form
\begin{equation}\label{eq:stuff-Ham-phase}
    H(Q,P) =V(Q) + \frac{1}{2\eps} | \xi(Q)|^2 + \frac12 |P|^2 + \frac{1}{2\eps} |\nabla\xi^\top(Q) P|^2.
\end{equation}
The idea of this construction is that, if we want to constrain a mechanical system to the set $\xi(Q)=0$, then the corresponding velocity, which for a system with unit mass matrix is formally given by $\frac{d}{dt}\xi(Q) = \nabla\xi^\top(Q)\dot Q =\nabla\xi^\top(Q)P$, should also be zero since the motion will be constrained to the corresponding (co)tangent space. This is sometimes called a \emph{hidden constraint}.   

With this additional term, one would expect that oscillations in momenta as $\eps\to 0$ do not occur, and therefore we have a well-defined soft-constraining limit for the Langevin dynamics. This indeed turns out to be the case, as will be discussed below. We point out that such momentum-based constraints have appeared in the context of free energy calculation \cite{LelievreRoussetStoltz10,LelievreRoussetStoltz12} and coarse-graining \cite{DLPSS18,nateghi2025consistent}. Yet, to our knowledge, this work is the first attempt to address the problem of highly oscillatory momenta in Langevin dynamics, beyond the formal asymptotics approach \cite{Reich00}.  


As a starting point, this paper focusses on the $\eps\to 0$ limit of the underdamped Langevin dynamics in the case of linear/affine constraints. 
It has been demonstrated in the mechanics literature (e.g.~\cite{kozlov1990realization,eldering2016realizing}) that constraints can also be realised via the infinite scale-separation limit of certain physical parameters such as friction and mass. As will be discussed below, in this work we also analyse the limits of such physical parameters for the Langevin dynamics. 
%
We will follow a pathwise approach  which allows us to handle the restrictive case of purely spatial constraint described above. We first analyse the limit of the constrained variables, characterised by the linear map $\xi(Q)$. This limit is then used to study the dynamics of the remaining unconstrained variables, including a discussion of the invariant measures arising out of the aforementioned constraining procedures (either via adding a stiff potential or passing to the limit in a physical parameter).
It should be pointed out that the only literature on soft-constraint limits for Langevin dynamics~\cite{Reich00} analyses the behaviour of the unconstrained variables  only. Alternatively, the general martingale approach of Katzenberger~\cite{Katzenberger91}  provides qualitative results only (some quantitative results have recently been derived in a companion paper~\cite{HartmannNeureitherSharma25a}), while in the present paper we provide quantitative pathwise estimates for the evolution of the unconstrained variables and pointwise in time estimates for the evolution of the constrained variables. Compared to related results  for overdamped systems, such as \cite[Appendix C]{projection_diffusion} that assumes bounded gradients for the potential $V$, we can slightly relax the assumptions on the coefficients; see Subsection \ref{sec:Ass-Pot} below. 


\subsection{Set-up: coordinate projection constraints and physical parameter limits} \label{sec:limit-results}

In this section we introduce notation and give an overview of our results briefly discussed above. 
Throughout this article we will work with \emph{coordinate-projection} constraints, i.e.\ $\xi:\R^d\to \R^k$ defined as
\begin{equation*}
      \R^d \ni Q=(Q^1,Q^2) \mapsto \xi(Q) = Q^1 \in \R^k, \ \ \text{ with } Q^1\in \R^k, \ Q^{2}\in \R^{d-k}.  
\end{equation*}
In line with the notation above, throughout this article we will use $X=(X^1,X^2) \in \R^{k}\times \R^{d-k}$. In order to avoid confusion, we write $|\cdot|^2$ whenever we raise something to the power of 2. We point out that while the results in this article generalise to affine constraints (see the discussion in Section \ref{sec:affine}), we restrict to the coordinate-projection for simplicity of presentation. We use $\nabla$ for the gradient in $\R^d$ and $\nabla_{q^1}$, $\nabla_{q^2}$ for the gradients in $\R^k$ and $\R^{d-k},$ respectively. 

To get an overview of the results, we introduce an explicit  Langevin equation (recall the general form in~\eqref{eq:intro-genLang}) which captures all considered constraint mechanisms and which is given by
\begin{subequations}\label{eq:abc}
    \begin{align}
        \begin{split}\label{eq:abc-fast}
        dQ^{1}_t &=  a P^{1}_t dt \\
        dP^{1}_t &= -\nabla_{q^1} V(Q_t) dt -b  Q^{1}_t dt - c \gamma P^{1}_t dt + \sqrt{2\gamma\beta^{-1}} \,dW^1_t 
        \end{split}\\
        \begin{split}\label{eq:abc-slow}
        dQ^{2}_t &=   P^{2}_t dt    \\
        dP^{2}_t &= -\nabla_{q^2} V(Q_t) dt - \gamma P^{2}_t dt + \sqrt{2\gamma\beta^{-1}} \,dW^2_t.
        \end{split}
    \end{align}
\end{subequations}
Here $a,b,c>0$ are parameters that will be specified in the following section and that can change according to the chosen constraint mechanism. 
Moreover, we introduce the limit dynamics for $(q_t,p_t) \in \R^{d-k} \times \R^{d-k}$:
\begin{align}\label{eq:limit-gen}
\begin{aligned}
    dq_t &= p_t dt\\
    dp_t &= - \nabla_{q^2} V(\hat q, q_t) dt - \gamma p_t dt + \sqrt{2  \gamma\beta^{-1}} dW_t^2\,. 
\end{aligned}
\end{align}

Here $W^1$ and $W^2$ are standard Brownian motions in $\R^k$ and $\R^{d-k}$ respectively and $V,\beta,\gamma$ are as before. The variable $\hat q$ in \eqref{eq:limit-gen} can be constant or evolve according to yet another limit dynamics. Note that the dynamics of $(Q^1,P^1)\in\R^{2k}$ and $(Q^2,P^2)\in \R^{2(d-k)}$ only see each other via the force $\nabla V(Q^1,Q^2)$. 

In this article we are interested in the behaviour of $(Q_t,P_t)$ in the limit of infinite scale-separation, typically indicated by some small parameter  $0<\eps\ll 1$ which will enter the dynamics through the choice of $a,b,c$ in \eqref{eq:abc-fast}. We drop the explicit dependence on $\eps$ of these capitalised variables to simplify notation. Table~\ref{tab:summary} below summarises the various limits studied in this article. A detailed explanation will be provided in the next section.  

We assume that the force $\nabla V$ is Lipschitz, which ensures the existence of unique strong solutions to the Langevin dynamics. Furthermore, to avoid technical compactness arguments, we assume that the $q^1$-gradient of the non-quadratic part of the potential is bounded. For a detailed discussion, about the assumption on the potential $V$ see Section \ref{sec:Ass-Pot}.  
We point out that in order to keep the presentation of our results as clear as possible, we present them for initial conditions, which are deterministic and do not depend on $\eps$. Yet, our proofs are written for the general case.

\begin{table}[ht]
\centering
{\renewcommand{\arraystretch}{1.5}
\resizebox{\textwidth}{!}{%
\begin{tabular}{@{} l l l l @{}} 
\toprule
\makecell[l]{Section} &
\makecell[l]{Pre-limit dynamics} & 
\makecell[l]{Limit for constrained variables } & 
\makecell[l]{Limit dynamics for unconstrained variables} \\
\midrule
\ref{sec:spatial} & Spatial confinement~\eqref{eq:intro-Spat-Cons-Fast}  & $Q^1_t\to 0$ if $Q^1_0=0$ else oscillates    &  if $Q^1_t\to 0$, $(Q^2_t,P^2_t)\to$\,\eqref{eq:limit-gen} with $\hat q=0$; $P^1_t$ oscillates \\
\ref{sec:phase-space} & Phase-space confinement~\eqref{eq:intro-Phase-Cons-Fast} & $(Q^1_t,P^1_t)\to (0,0)$    & $(Q^2_t,P^2_t)\to$\,~\eqref{eq:limit-gen} with $\hat q=0$\\
\ref{sec:zeromass} & Zero mass~\eqref{eq:intro-0Mass-Fast}  & $P^1_t\to 0$  & $Q^1_t\to$\,\eqref{eq:0Mass-q1}, $(Q^2_t,P^2_t)\to$\, \eqref{eq:limit-gen} with $\hat q=Q^1_t$ \\
\ref{sec:highmass} & Infinite mass~\eqref{eq:intro-InfMass} & $Q^1_t\to Q^1_0$  & $P^1_t\to$\,\eqref{eq:InfMass-p1}, $(Q^2_t,P^2_t)\to$\,\eqref{eq:limit-gen} with $\hat q=Q^1_0$ \\
\ref{sec:highfric} & Infinite friction ~\eqref{eq:intro-InfFriction-withFD-Fast}
& $Q^1_t\to Q^1_0$ &  $(Q^2_t,P^2_t)\to$\,\eqref{eq:limit-gen} with $\hat q=Q^1_0$ \\
\bottomrule
\end{tabular}
}
}
\caption{ This table summarises the various asymptotic limits studied in this article. 
The pre-limit dynamics in the second column refers to the $(Q^1_t,P^1_t)$ dynamics \eqref{eq:abc-fast} for each of the cases studied in this article -- the first column indicates the respective sections where the precise error bounds can  be found. 
In all the aforementioned cases the dynamics of $(Q^2_t,P^2_t)$ is given by~\eqref{eq:abc-slow}. The third column collects the \emph{constrained} variables 
and their corresponding limits as $\eps\to 0$. The fourth column collects the remaining \emph{unconstrained} variables and the corresponding limit dynamics.}
\label{tab:summary}
\end{table}

\subsubsection{Confinement via stiff potential}\label{sec:intro-StiffConf}
Adding stiff forces to the system by changing the Hamiltonian is the classical approach to constraining SDEs to manifolds. In the following, we discuss two such choices: the first approach involves adding confinement in the position variable only and is referred to as  \emph{spatial confinement};  the second involves adding a confinement potential in both position and momentum variables and is referred to as \emph{ phase-space confinement} in the following. For a deeper connection between these two choices and possibly non-linear coarse-graining maps, see the discussion in Section~\ref{sec:Disc-GenForm}. 
\paragraph{Spatial confinement.}
Using the Hamiltonian~\eqref{eq:stuff-Ham-spat} with $\xi(Q)=Q^1$, we have $a=1, \  b=\frac1\eps, \ c=1$, and therefore 
the $(Q^1,P^1)$-dynamics~\eqref{eq:abc-fast} is (note that $(Q^2,P^2)$ remains unchanged) 
\begin{align}\label{eq:intro-Spat-Cons-Fast}
\begin{aligned}
     dQ^{1}_t &=  P^{1}_t dt   \\
    dP^{1}_t &= -\nabla_{q^1} V(Q_t) dt -\frac1\eps  Q^{1}_t dt -  \gamma P^{1}_t dt + \sqrt{2\gamma\beta^{-1}} \,dW^1_t.
\end{aligned}
\end{align}
Here and in what follows, we drop the explicit dependence of $(Q^1,P^1)$ on $\eps$. 
As expected, $Q^1_t\to 0$ as $\eps\to 0$ (for the exact mode of convergence see Theorem~\ref{prop:SpatCons}), however this only holds for well-prepared initial datum $Q^1_0=0$. Furthermore, the constrained momentum $P^1_t$ fluctuates for any choice of initial data -- see Proposition~\ref{prop:SpatCons}  for the exact result and Appendix~\ref{app:oscP1} for a simple example. This fluctuation or oscillation is in line with similar findings in the Hamiltonian literature~\cite{bornemann1997homogenization}; see Remark \ref{rem:SpatCompLit} for further references. 

The unconstrained dynamics $(Q^2_t,P^2_t)$ in~\eqref{eq:abc-slow} converges, as expected, to the limit dynamics $q_t,p_t$ in~\eqref{eq:limit-gen}. However, this convergence only holds if $Q^1_t\to 0$, which in turn holds if $Q^1_0=0$. Let us already mention that under certain conditions on the potential, the constrained as well as the unconstrained variables have a \emph{surprising uniform-in-time convergence}, see Remark \ref{rem:Unif-InTime} for details. This result uses Gronwall's inequality and is therefore remarkable as usually these estimates grow exponentially with the considered time horizon. 

These results indicate that the spatially-confined Langevin dynamics has a reasonable spatial limit only for well-prepared initial data while the corresponding momentum always oscillates. Consequently, using this dynamics for sampling steady states can lead to numerical errors. This motivates the phase-space constrained Langevin dynamics which we discuss in the sequel.

\paragraph{Phase-space confinement.}
Using the Hamiltonian~\eqref{eq:stuff-Ham-phase} with $\xi(Q)=Q^1$, we have $a=1+ \frac1\eps, \  b=\frac1\eps, \ c=1+\frac1\eps$ in \eqref{eq:abc-fast}, and therefore 
the $(Q^1,P^1)$ dynamics is (note that $(Q^2,P^2)$ stays unchanged) 
\begin{align}\label{eq:intro-Phase-Cons-Fast}
\begin{aligned}
     dQ^{1}_t &=  \bigg(1+ \frac{1}{\eps}\bigg)P^{1}_t dt   \\
    dP^{1}_t &= -\nabla_{q^1} V(Q_t) dt -\frac1\eps  Q^{1}_t dt -  \gamma\bigg(1+ \frac1\eps\bigg) P^{1}_t dt + \sqrt{2\gamma\beta^{-1}} \,dW^1_t.
\end{aligned}
\end{align}
In stark contrast to the spatially-constrained case, for \emph{any} initial datum both constrained state-variables $(Q^1_t,P^1_t)\to 0$ as $\eps\to 0$, see Theorem~\ref{prop:PS-Cons}. In fact, as outlined in Sec.~\ref{sssec:PS-Init}, these convergence results even hold for initial conditions, for which the total energy diverges as $\eps\to 0$. This analysis suggests that the dynamics is attracted to the constraint manifold $(Q^1_t,P^1_t)=0$ for any initial datum. This behaviour is different from the behaviour of spatially confined mechanical systems, for which the limiting orbit in such cases may not even lie on the constraint manifold (see \cite{bornemann1997homogenization,froese2001realizing}).

Similar to the spatially-constrained setting, the unconstrained variables $(Q^2_t,P^2_t)$ also converge to the limit $(q_t,p_t)$ as given in~\eqref{eq:limit-gen}, but now for \emph{any} initial datum of the full dynamics $(Q_t,P_t)$. Moreover, the \emph{surprising uniform-in-time convergence} for the constrained as well as the unconstrained variables holds, under the same assumptions as in the spatially-constrained case. 

To the best of our knowledge, the $\eps\to 0$ limit (which we call the soft-constrained limit) of the phase-space confined Langevin dynamics has not yet been studied in the literature. 

\paragraph{Steady states.} Sampling invariant probability densities or steady states (typically the Boltzmann or Gibbs-Boltzmann density) on manifolds is one of the key motivation for working with soft- and hard-constrained Langevin dynamics. Consequently, it is important to understand the steady states for the limiting dynamics discussed above. 
The Langevin dynamics~\eqref{eq:intro-genLang} admits the unique steady state 
\begin{equation*}
     \mathcal P(\R^{2d})\ni \mu(dQ,dP) = Z^{-1} e^{-\beta H(Q,P) }\,dQdP,  
\end{equation*}
where $Z$ is the normalisation constant and the Hamiltonian $H$ is given by \eqref{eq:stuff-Ham-spat} for spatially-constrained dynamics or \eqref{eq:stuff-Ham-phase} for the phase-space constrained dynamics. 

For both spatial or phase-space confinement that come with uniform-in-time estimates (i.e. they hold for all $t\in[0,\infty)$), it turns out that the limits $\eps\to 0$ and $t\to\infty$ commute. Consequently, by formally letting $\eps\to 0$ in the full Boltzmann measure we arrive at the following steady states for the unconstrained variables: 
In the spatial constraining case, under suitable initial conditions, $Q^1_t$ converges to zero, and in the limit the remaining unconstrained variables $P^1_t$ and $(Q^2_t,P^2_t)$ admit the following steady state in the limit $\eps\to 0$, 
\begin{equation*}
    \mu(dq^2,dp^1,dp^2) = \frac{1}{Z}\exp\biggl(-\beta\biggl[V(0,q^2) +\frac12 |p|^2\biggr]\biggr)
\end{equation*}
with normalisation constant $Z$ that is different from $Z$ above. (In an abuse of notation we use the same variable $Z$ for different normalisation constants.) This can be seen by formally sending $\eps\to 0$ in the invariant measure for the full dynamics. Note that, even though $P^1_t$ oscillates, it still admits an invariant measure.  

A similar analysis for the phase-space constrained case, where $(Q^1_t,P^1_t)\to 0$, reveals that in the limit $\eps\to 0$, the remaining unconstrained variables $(Q^2_t,P^2_t)$ admit the steady state 
\begin{equation*}
    \mu(dq^2,dp^2) = \frac{1}{Z}\exp\biggl(-\beta\biggl[V(0,q^2) +\frac12 |p^2|^2\biggr]\biggr)
\end{equation*}
with (another) normalisation constant $Z$. A detailed analysis of the steady states is given in Section~\ref{sec:Steady}.

\subsubsection{Constraining via physical parameter limits}\label{sec:intro-PhyLimits}
In the following we take a different perspective on realising constraints, in that we consider limits of certain physical parameters. For instance, the constraint $Q^1_t \equiv Q^1_0 $, can be realised by sending the corresponding friction to infinity, i.e. over-damping parts of the system. We discuss several such limits below.   

\paragraph{Zero mass limit} 
We first study the zero mass limit, which is some form momentum confinement. The zero mass limit corresponds to considering the Hamiltonian 
\begin{align}\label{eq:Ham-ZeroMass}
    H(Q,P) = V(Q) + \frac{1}{2} \biggl( |P|^2 + \frac{1}{\eps} |P^1|^2 \biggr),
\end{align}  
that includes a stiff potential in $P^1$ that penalises deviations from $P^1=0$ in the limit $\eps\to 0$. 
The Hamiltonian in \eqref{eq:Ham-ZeroMass} can be written as 
\[
H(Q,P) = V(Q) + \frac{1}{2} P^\top M^{-1} P\,, \
\text{ with } 
M = \begin{pmatrix}
    \frac{\eps}{\eps +1}I_k & 0 \\ 0 &I_{d-k}
\end{pmatrix} \in \R^{d\times d}\quad 
\] 
and the entries correspond to the mass of the first $k$ particles, which tends to zero as $\eps \to 0.$ In the formulation of~\eqref{eq:abc-fast} this corresponds to $a=c=1+\frac1\eps$, $b=0$ so that
\begin{align}\label{eq:intro-0Mass-Fast}
\begin{aligned}
     dQ^{1}_t &= \bigg(1+\frac1\eps\bigg) P^{1}_t dt  \\
    dP^{1}_t &= -\nabla_{q^1} V(Q_t) dt -   \gamma\bigg(1+\frac1\eps \bigg) P^{1}_t dt + \sqrt{2\gamma\beta^{-1}} \,dW^1_t.
\end{aligned}
\end{align}
As expected from the momentum confinement perspective $P^1_t \to 0$ as $\eps \to 0.$ Then, in accordance with classical results (e.g.~\cite[Ch.~10]{Nelson01}) on overdamped limits for underdamped Langevin dynamics,  $Q^1_t$ converges to $\hat q_t$ that evolves according to the overdamped Langevin equation 
\begin{align}\label{eq:0Mass-q1}
    d\hat q_t = - \frac{1}{\gamma} \nabla_{q^1} V(\hat q_t,q_t)  dt + \sqrt{\frac{2\beta^{-1}}{\gamma}} dW_t^1
\end{align}
where $q_t$ is the solution to \eqref{eq:limit-gen}. Overall, we can show that $(Q^1_t,Q^2_t,P^2_t) \to (\hat q_t,q_t,p_t)$, where $\hat q_t$ is given above and $(q_t,p_t)$ are the solution to \eqref{eq:limit-gen}, as $\eps \to 0$  -- see Proposition \ref{prop:Light} for details. 
Here, in contrast to the phase-space confinement result, we can only prove a pointwise in time estimate. Moreover, the convergence is not uniform in time, but the error bound grows exponentially in time.

\paragraph{Infinite mass limit}
Next, we study the infinite mass limit, in which the mass of the particles with position and momentum given by $Q^1,P^1$ is increased to infinity. Intuitively, this should lead to a constant position $Q^1$ and is somewhat similar to the infinite friction case, which we discuss below. From a Hamiltonian perspective, this corresponds to the Hamiltonian 
\begin{align}\label{eq:Ham-InfMass}
    H(Q,P) = V(Q) + \frac{1}{2} P^\top M P, \quad \text{where } M = \begin{pmatrix}
        \frac{1}{\eps} I_k & 0 \\ 0 & I_{d-k}
    \end{pmatrix} \in \R^{d \times d}
\end{align}
meaning the mass of the particles with position and momenta $(Q^1,P^1)$ tends to infinity as $\eps \to 0.$
In~\eqref{eq:abc-fast} this corresponds to the choices $a=c=\eps, b = 0$  and so the dynamics of the constrained variables reads
\begin{align}\label{eq:intro-InfMass}
\begin{aligned}
     dQ^{1}_t &= \eps P^{1}_t dt   \\
    dP^{1}_t &= -\nabla_{q^1} V(Q_t) dt -   \gamma\eps P^{1}_t dt + \sqrt{2\gamma\beta^{-1}} \,dW^1_t.
\end{aligned}
\end{align}
Indeed, and as expected, $Q^1_t \to Q^1_0$ as $\eps \to 0$. 
Nevertheless, $P^1_t \nrightarrow 0$ , instead $P^1_t$ converges for any $t>0$ to a process $p^1_t$ which is the solution to 
\begin{align}\label{eq:InfMass-p1}
dp^1_t = - \nabla_{q^1}V(Q^1_0,Q^2_t) dt + \sqrt{2\gamma\beta^{-1}} dW_t^1\,.
\end{align}
The unconstrained variables $Q^2_t,P^2_t$ converge to the solution of \eqref{eq:limit-gen} with $\hat q = Q^1_0$ for any initial condition and in a pathwise sense, again not independent of the considered time-horizon; see Proposition \ref{prop:heavy} for details. 

\paragraph{Infinite friction limit and the fluctuation-dissipation relation}
We first study the infinite friction (i.e. large damping) limit \emph{with}  fluctuation-dissipation relation, which corresponds to the parameters  $a=1, b = 0, c = \frac{1}{\eps}$ in~\eqref{eq:abc-fast}, and an additional $\eps^{-\frac12}$-scaling of the noise term (i.e.\ the temperature for the constrained variables also scales like $\eps$), which leads to 
\begin{align}\label{eq:intro-InfFriction-withFD-Fast}
\begin{aligned}
     dQ^{1}_t &=  P^{1}_t dt  \\
    dP^{1}_t &= -\nabla_{q^1} V(Q_t) dt -   \frac\gamma\eps P^{1}_t dt + \sqrt{\frac{2\gamma\beta^{-1}}{\eps}} \,dW^1_t.
\end{aligned}
\end{align}
while $(Q^2_t,P^2_t)$ stay unchanged. Note that $\eps$ appears in both the linear damping term (second term in $P^1_t$ evolution) and the noise term, such that the fluctuation-dissipation relationship holds for all $\eps$. 

In this case, as the physical intuition of infinite damping suggests, $Q^1$ remains constant, i.e. $Q^1_t \to Q^1_0$ as $\eps \to 0$ for any $t>0$; see Proposition~\ref{prop:PartialMom}. We expect $P^1_0$ to have zero mean, but non-trivial variance as $\eps\to 0$ since $P^1_t \sim \mathcal{N}(0, \beta)$ for small $\eps$, which is captured by our results as well. Moreover, the unconstrained variables $Q^2,P^2$ converge pathwise  to the solution of \eqref{eq:limit-gen} with $\hat q = Q^1_0$ for any initial condition. 

A related setup is that of infinite  friction \emph{without} fluctuation-dissipation,  which corresponds to $a=1, b = 0, c = \frac{1}{\eps}$ in~\eqref{eq:abc-fast}, without any change in the noise term, which leads to 
\begin{align}\label{eq:intro-InfFriction-NoFD-fast}
\begin{aligned}
     dQ^{1}_t &=  P^{1}_t dt  \\
    dP^{1}_t &= -\nabla_{q^1} V(Q_t) dt -   \frac\gamma\eps P^{1}_t dt + \sqrt{2\gamma\beta^{-1}} \,dW^1_t,
\end{aligned}
\end{align}
while $(Q^2_t,P^2_t)$ stay unchanged as before. Note that in this case the noise is not scaled by $\eps$, i.e.\ the fluctuation-dissipation in violated here. In this case, $Q^1\to Q^1_0$ and $P^1\to 0$ where the latter follows since the noise is $O(\eps)$ and therefore does not contribute to the limit.

\subsection{Main assumptions and consequences}\label{sec:Ass-Pot}

We prove quantitative estimates for the asymptotic limits discussed earlier. In addition to requiring well-posedness of strong solutions to the Langevin dynamics~\eqref{eq:intro-genLang} we need certain growth conditions on $V$ for our analysis. We state the assumptions here already for two reasons: first, these results will help outline our proof techniques in the next section, and second, there is a crucial difference between the convergence results for constrained and unconstrained variables and this difference depends on the particular form of the potential
\begin{equation*}
    V\in C^2(\R^d,\R), \ \ \R^{k}\times \R^{d-k} \ni (q^1,q^2)\mapsto V(q^1,q^2)\,.
\end{equation*}
Specifically, we assume that $V$ is of the form
\begin{equation}\label{eq:Valpha}
    V(q) = \frac{\alpha}{2}|q|^2 + U(q) \geq 0,
\end{equation}
where $\alpha\geq 0$ and $U\in C^2(\R^d,\R).$  Additionally, the following is assumed throughout this article:
\begin{enumerate}[label=(\Alph*)]    \item\label{item:assVLip} $\nabla U:\R^d\to \R^{d}$ is globally Lipschitz, i.e.\ there exists a constant $L_U$ such that for any $q,\tilde q\in\R^d$ we have 
    \begin{equation}\label{ass:VLip}
        \bigl|\nabla U(q) - \nabla U(\tilde q)\bigr| \leq L_U|q-\tilde q|.
    \end{equation}
    \item\label{item:assVbound} $\nabla_{q^1} U:\R^d\to \R^k$ is bounded, i.e.\ there exists a constant $C_U$  such that for any $q\in\R^d$ we have
    \begin{equation}\label{ass:Vbound}
        \bigl|\nabla_{q^1} U(q) \bigr| \leq C_U.
    \end{equation}
\end{enumerate}

Let us comment on these assumptions. Classical literature dealing with Langevin equations (especially when dealing with steady states) assumes that there exist constants $c_1,c_2>0$ such that 
\begin{equation}\label{eq:genVAss}
     c_1(1+|q|)\leq | V(q)|\leq c_2(1+|q|^2)
\end{equation}
where the super-linear growth ensures existence of a steady state proportional to $e^{-\beta V}$ while the sub-quadratic growth ensures that the Langevin SDE has Lipschitz coefficients, which guarantees existence of a unique strong solution. 
This article deals with strong solutions constructed via variation of constants, and therefore we require that $q\mapsto \nabla V(q) =\alpha q + \nabla U(q)$ is Lipschitz, which explains Assumption~\ref{item:assVLip}. 

Employing variation of constants, we use the linear part of the drift, which explicitly involves $\alpha$ (note that there are other linear terms in Langevin dynamic~\eqref{eq:intro-genLang} even if $\alpha=0$). It turns out that the parameter $\alpha$ can lead to significant changes in the quantitative results, in particular for the unconstrained variables. Specifically, for constraining via spatial and phase-space confinement, the convergence turns out to be,~\emph{uniform-in-time} even though the proof employs a Gronwall-type argument. This is remarkable, since most Gronwall-type quantitative estimates for pathwise asymptotic limits in SDEs explode exponentially in time. The qualitative $\alpha$-dependence of the convergence behaviour is the reason to include $\alpha$ explicitly in the potential $V$. Note, that we could allow for different quadratic growth in the different $q$ variables, i.e. $V(q) = \frac{1}{2}\sum_{i=1}^d \alpha_i q_i^2,\ \alpha _i \geq 0$ which would not change the results. For simplicity of presentation we restrict ourselves to $\alpha_i \equiv \alpha$. For a detailed explanation of the uniform in time behaviour see Remark \ref{rem:Unif-InTime}.

Assumption~\ref{item:assVbound} is a technical requirement made to simplify the analysis. In particular, this assumption is required when dealing with the constrained variables, for instance $Q^1$, where we need to bound terms of the type 
\begin{equation*}
    \int_0^t f_1(s,\eps)|\nabla_{q^1} U(Q^\eps_s)|ds
\end{equation*}
as $\eps\to 0$  (see the general proof strategy outlined in Section~\ref{sec:Strategy-constrained} and the proof of Proposition~\ref{prop:SpatCons}). A more general approach to deal with these integrals would be to discuss compactness properties of the sequence $(Q^{\eps}_s)_{\eps>0}$ as $\eps\to 0$ and to then pass to the limit in these integrals.  We choose the simpler approach by assuming~\ref{item:assVbound} to avoid technical details as this does not change the nature of the soft-constraint limits themselves, but only the underlying analysis. We should point out that while this compactness analysis is involved, it is well understood (see for instance~\cite[Sec.~4]{Katzenberger91} which studies compactness properties in the context of stiff drifts in semi-martingales). We also point out that Assumption~\ref{item:assVbound} (with $\alpha=0$) is used in~\cite[Eq.~(C.3)]{projection_diffusion} which deals with soft-constraining limits for the overdamped Langevin dynamics. In fact, the aforementioned compactness analysis would allow for the more general assumption~\eqref{eq:genVAss} on $V$.

Finally, we point out that Assumptions~\ref{item:assVLip}-\ref{item:assVbound} along with $\alpha>0$ imply~\eqref{eq:genVAss}, and therefore are sufficient to ensure that the corresponding Boltzmann-Gibbs distribution is a probability measure, i.e.\ $e^{-\beta V}$ is integrable. This is another reason we have included $\alpha$ in the assumption on $V$.

\subsubsection{Proof strategy}

In this section, we outline the proof strategy and implications of the error estimates presented in this work. To this end, introduce the notation $Q^{1}_t = \bigl( (Q^{1}_t)^1,\ldots,(Q^{1}_t)^k\bigr)$, $P^{1}_t = \bigl( (P^{1}_t)^1,\ldots,(P^{1}_t)^k\bigr)$ for the coordinates of $Q^{1
}_t \in\R^k$, $P^{1}_t\in\R^k$ (similarly for $(Q^2_t,P^2_t)\in \R^{2(d-k)}$). Observe that each pair $((Q^{1}_t)^i,(P^{1}_t)^i)$ in $\R^2$ for $i\in\{1,\ldots,k\}$  satisfies a simple $2d$-Langevin equation of form \eqref{eq:abc-fast}, where the different spatial-momenta coordinates only interact via the potential $V$ (and therefore only via $U$) (similarly for $(Q^2_t,P^2_t)$ in  \eqref{eq:abc-slow}). Throughout this article, we call $(Q^1,P^1)$ the constrained variables, and $(Q^2,P^2)$ the unconstrained variables.  

    \paragraph{General solution} Calling $X=((Q^{1}_t)^i,(P^{1}_t)^i)$  and applying variation-of-constant (see Appendix \ref{app:VarConst}), we arrive at the solution
\begin{align}\label{eq:intro-VarCons}
    X_t = e^{A t} X_0 + \int_0^t e^{A(t-s)} \begin{pmatrix} 0 \\ \nabla_i U(Q_s)\end{pmatrix}ds + \int_0^t e^{A(t-s)} C dB_s, 
\end{align}
where the linear part of the dynamics driven by $A\in \R^{2\times 2}$ and the noise matrix $C\in\R^{2\times 1}$  are of the form 
\begin{equation} \label{eq:intro-driftdiffmat}
A= \begin{pmatrix}
    0 & a \\ \alpha + b & c
\end{pmatrix}, \quad 
C = \begin{pmatrix}
    0 \\ \sqrt{2\gamma\beta^{-1}}
\end{pmatrix}.
\end{equation}
Here $B_t$ is a one-dimensional Brownian motion.
The same idea works for any $i$ as well as for $((Q^2)^i,(P^2)^i) \in \R^2$.
\paragraph{Constrained variables} For the constrained variables, the parameters $a,b,c\geq 0$ defining $A$ in \eqref{eq:intro-driftdiffmat} involve $\eps$ and therefore control the convergence. They are precisely defined  in Section~\ref{sec:limit-results} where the different limits are discussed. As stated above, Assumption~\ref{item:assVbound} is used to control the first integral term in~\eqref{eq:intro-VarCons}. 
A careful analysis of the explicit solution  \eqref{eq:intro-VarCons}, in particular the corresponding matrix exponentials, allows for proving convergence statements of the type 
\begin{align*}
    \E\Bigl[\bigl|X_t - \bar X_t\bigr|^2 \Bigr] \leq f(\eps,t), 
\end{align*}
where $t\geq0$ and the limit $\bar X_t$ could possibly be a constant. These results for $X_t=((Q^1)^i,(P^1)^i)$ are then combined to provide estimates for $(Q^1_t,P^1_t)$. 
\paragraph{Unconstrained variables} The bounds for the constrained variables, are then used to provide pathwise bounds for the unconstrained variables $(Q^2_t,P^2_t)$. These bounds are derived using the explicit solution \eqref{eq:intro-VarCons} together with a Gronwall-type argument. For the unconstrained variables the linear part of the drift, given by the matrix $A$ in \eqref{eq:intro-driftdiffmat}, is independent of $\eps$ and reads \[ A = \begin{pmatrix}
    0 & 1 \\ - \alpha  & -\gamma 
\end{pmatrix}.\]  Depending on $\alpha$ the overall bound will either grow exponential in the time horizon $T$ (case $\alpha=0$) or, if $\alpha > 0$ be either polynomial in $T$ or even independent of $T$ leading to a uniform in time estimate. For more details see Section~\ref{app:LimitProofs} and in particular Theorem \ref{cor:gronwall}. 
%

\subsubsection{Further remarks}\label{sec:genremarks}

The quantitative results also imply \emph{pointwise-in-time} estimates, \emph{convergence in probability} as well as the convergence of the corresponding time-marginal distributions in the \emph{Wasserstein-2 distance} (see Corollary \ref{cor:point-time-conv-prob}).
We should point out that several results for Langevin equation, for instance the overdamped limit of Langevin dynamics~\cite[Proposition 2.14]{LelievreRoussetStoltz10}, provide almost sure convergence on bounded time intervals.
In Appendix~\ref{app:AlmostSure} we show that quantitive estimates in almost-sure and $L^1$-pathwise sense also hold in the phase-space confinement limit. As our focus in this article is on proving quantitative estimates in (pathwise) $L^2$-sense and hence we do not pursue this route here. 

\paragraph{Role of $\alpha$ in \eqref{eq:Valpha}.} The choice of $\alpha$ considerably changes the $T$-scaling of the estimates of the unconstrained variables as stated above. On the other hand, the choice of $\alpha$ plays no role in the estimates of the constrained variables in the spatial and phase-space confinement case, see Propositions \ref{prop:SpatCons}, \ref{prop:PS-Cons}. This is due to the fact that in this case there is the quadratic confinement potential in $Q^1$ which scales like $\eps^{-1}$. For the physical limits the choice of $\alpha$ does make a difference, since in this case there is no quadratic potential  due to the constraining method.

\paragraph{Time horizon.} The final time horizon $T$ is assumed to be fixed. In particular in Section \ref{sec:PhysicalLimits} dealing with physical limits, when discussing competing terms in the error estimates, for instance $\eps T^2$ and $\eps T$ which can either decay or blow up as $\eps\to 0$ \emph{and} $T\to\infty$, we always present the lower-order terms in $\eps$, to simplify presentation. This will make the final estimates in the results considerably more readable. Readers should refer to the proofs for the precise scaling in $\eps$ and $T$.

\paragraph{Initial data.} In the presentation of the results we choose deterministic and $\eps$-independent initial conditions, to make the presentation as clear as possible. Nevertheless, all results hold for random and $\eps$-dependent initial datum and the general statements can be found in the proofs. Moreover, for the unconstrained variables, we choose the same initial datum for the pre-limit as well as the limit dynamics throughout the presentation, but all results hold for general initial datum.  See Sec.~\ref{sssec:Spat-InitDatum} for a discussion.

\section{Constraining via confinement}\label{sec:InfConf}

In this section we prove convergence results for the soft-constrained dynamics with spatial confinement~\eqref{eq:abc-slow},\eqref{eq:intro-Spat-Cons-Fast} and with phase-space confinement~\eqref{eq:abc-slow},\eqref{eq:intro-Phase-Cons-Fast}, which corresponds to the Hamiltonian~\eqref{eq:stuff-Ham-spat} and~\eqref{eq:stuff-Ham-phase} respectively. Throughout the section we work under the assumptions \ref{item:assVLip}-\ref{item:assVbound}. 

\subsection{Spatial confinement} \label{sec:spatial}
The following two results investigate the behaviour of the spatially confined Langevin dynamics~\eqref{eq:abc-slow},\eqref{eq:intro-Spat-Cons-Fast} as $\eps\to 0$. The first result deals with the behaviour of the constrained variables $(Q^1_t,P^1_t)$, which we expect to converge to zero due to the infinite confinement of $Q^1$, while the second result discusses the behaviour of the unconstrained variables $(Q^2_t,P^2_t)$, which we expect to converge to \eqref{eq:limit-gen} with $\hat q=0$. 
Making the connection to~\eqref{eq:abc-fast}, here we study the limit of equation \eqref{eq:abc-fast} with $b \to \infty$ and $a=c=1$.

\begin{prop}\label{prop:SpatCons}
Given $\eps>0$, let $(Q^1_t,P^1_t)$ for $i=1,2$, be the solution of~\eqref{eq:intro-Spat-Cons-Fast} with corresponding initial datum $(Q^1_0,P^1_0)$. 
\begin{enumerate}[label=(\roman*)]
\item\label{item:spat-Q1} 
For any $t\geq 0$
    \begin{align}
        \E\Bigl[\bigl|Q_t^{1}\bigr|^2\Bigr] \leq C_1 \eps +  C_2 e^{-\gamma t} \Bigl(\bigl|Q^1_0\bigr|^2 + \eps \bigl|P^1_0\bigr|^2\Bigr), 
        \label{eq:spat-point-bound}
    \end{align}
    where the constants $C_1,C_2>0$ are independent of $\eps$ and $t$.

\item\label{item:spat-P1} 
For any $T>0$ 
\begin{equation*}
    \lim\limits_{\eps \to 0 }  \int_0^T P^{1}_t \, dt = 0 \text{ \ in probability}.
\end{equation*}
\end{enumerate}
\end{prop}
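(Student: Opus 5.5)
Both parts rest on the variation-of-constants formula \eqref{eq:intro-VarCons}, applied componentwise to $X=((Q^1_t)^i,(P^1_t)^i)$ with $a=c=1$ and $b=1/\eps$. Since $\nabla_{q^1}V(Q)=\alpha Q^1+\nabla_{q^1}U(Q)$, the linear drift matrix is
\[
A_\eps=\begin{pmatrix}0&1\\-(\alpha+\eps^{-1})&-\gamma\end{pmatrix}.
\]
Its eigenvalues are $\lambda_\pm=-\tfrac{\gamma}{2}\pm i\omega_\eps$ with $\omega_\eps=\sqrt{\alpha+\eps^{-1}-\gamma^2/4}\sim\eps^{-1/2}$, assuming $\eps$ small. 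The forcing $\nabla_i U(Q_s)$ is bounded by $C_U$ by Assumption~\ref{item:assVbound}. The first step is to write $e^{A_\eps t}$ explicitly:
\[
e^{A_\eps t}=e^{-\gamma t/2}\Bigl[\cos(\omega_\eps t)I+\tfrac{\sin(\omega_\eps t)}{\omega_\eps}\bigl(A_\eps+\tfrac{\gamma}{2}I\bigr)\Bigr].
\]
This gives the entry bounds $|(e^{A_\eps t})_{11}|\le Ce^{-\gamma t/2}$ and $|(e^{A_\eps t})_{12}|\le C\eps^{1/2}e^{-\gamma t/2}$. The other entries are $O(1)$, respectively $O(\eps^{-1/2})$, times $e^{-\gamma t/2}$, but they are not needed for $Q^1$.

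For \ref{item:spat-Q1} I take the first component of \eqref{eq:intro-VarCons} and bound its three terms separately.
\begin{itemize}
\item The initial term is at most $Ce^{-\gamma t/2}\bigl(|Q^1_0|+\eps^{1/2}|P^1_0|\bigr)$. Squaring gives $C_2e^{-\gamma t}(|Q^1_0|^2+\eps|P^1_0|^2)$.
\item The drift term is $\int_0^t(e^{A_\eps(t-s)})_{12}\nabla_iU(Q_s)\,ds$. It is bounded by $C_U C\eps^{1/2}\int_0^\infty e^{-\gamma r/2}dr=O(\eps^{1/2})$, so its square is $O(\eps)$, uniformly in $t$.
\item The stochastic term has second moment $2\gamma\beta^{-1}\int_0^t|(e^{A_\eps r})_{12}|^2dr\le C\eps$ by the Itô isometry, again uniformly in $t$.
\end{itemize}
Summing over $i=1,\dots,k$ and using $(x+y+z)^2\le3(x^2+y^2+z^2)$ yields \eqref{eq:spat-point-bound}. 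The point requiring care is that the constants must not depend on $\omega_\eps$ except through the favourable factor $1/\omega_\eps$. I would also handle the case where the discriminant changes sign for moderate $\eps$ by absorbing it into the constants.

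For \ref{item:spat-P1} I avoid $P^1$ itself, which does not converge, and use the equation $dQ^1_t=P^1_t\,dt$. This gives $\int_0^TP^1_t\,dt=Q^1_T-Q^1_0$ exactly. If $Q^1_0=0$, part \ref{item:spat-Q1} gives $\E|Q^1_T|^2\le C_1\eps+C_2e^{-\gamma T}\eps|P^1_0|^2\to0$, and Chebyshev's inequality gives convergence in probability. This is the well-prepared setting in which the paper's Table states the convergence. For $Q^1_0\neq0$, \ref{item:spat-Q1} does not force $Q^1_T\to Q^1_0$, and the claim as literally stated would fail. So the main obstacle is pinning down the intended hypothesis. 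I would state the proof under $Q^1_0=0$ (or $Q^1_0=Q^1_{0,\eps}\to0$). Alternatively, I would interpret the result as saying that the oscillating $P^1$ has vanishing time integrals, i.e. converges weakly to zero, which is exactly what $Q^1_T-Q^1_0\to0$ in $L^2$ delivers.
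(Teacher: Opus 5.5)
Your part (i) is the paper's argument: the explicit underdamped exponential, first-row entries of size $\bigO(1)e^{-\gamma t/2}$ and $\bigO(\eps^{1/2})e^{-\gamma t/2}$, Young's inequality, $|\nabla_{q^1}U|\le C_U$ and the It\^o isometry. You put $\alpha$ into the linear part, whereas the paper treats $\alpha=0$ and only remarks that $\alpha>0$ is similar. One correction: the regime where the discriminant changes sign cannot be ``absorbed into the constants''. At critical damping the $(1,1)$ entry is $e^{-\gamma t/2}(1+\tfrac{\gamma}{2}t)$, and for real eigenvalues the slower decay rate is strictly below $\gamma/2$. So the factor $e^{-\gamma t}$ in \eqref{eq:spat-point-bound} holds only for $\eps\le\eps_0$ with $\omega_{\eps_0}>0$, which is also all the paper proves.

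For (ii) your route is genuinely different and much shorter. The paper integrates the explicit formula \eqref{eq:P1-sol}, exchanges the order of integration, and evaluates the kernels with \eqref{eq:aux-int}. It then estimates four pieces $I^\eps_1,\dots,I^\eps_4$ separately. Your identity $\int_0^T P^1_t\,dt=Q^1_T-Q^1_0$ shows that these pieces are just the terms of $Q^1_r-Q^1_0$; indeed $I^\eps_3$ and $I^\eps_4$ come out as the drift and noise terms of \eqref{eq:Q1-sol}. So (ii) follows from (i) plus Chebyshev, and the It\^o isometry replaces the dominated-convergence argument for $I^\eps_4$.

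Your caveat is correct and exposes an error in the paper. The paper claims $|I^\eps_1+I^\eps_2|\le \bigO(\eps)|Q^1_0|+\bigO(\eps^{1/2})|P^1_0|$. But $\eps(|\theta|^2+\gamma^2)=4$ for $\alpha=0$, so the coefficient of $Q^1_0$ in its displayed expression is $e^{-\gamma r/2}\bigl[\cos(r|\theta|/2)+\tfrac{\gamma}{|\theta|}\sin(r|\theta|/2)\bigr]-1$. For small $\eps$ this has modulus at least $\tfrac12(1-e^{-\gamma r/2})$. With $U\equiv0$, for instance, $\int_0^T P^1_t\,dt$ is Gaussian with variance $\bigO(\eps)$ and a mean whose modulus stays bounded away from $0$ when $Q^1_0\neq0$. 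So (ii) holds exactly under the hypotheses your reduction uses: $Q^1_0=0$, or more generally $Q^{1,\eps}_0\to0$ and $\eps|P^{1,\eps}_0|^2\to0$. The paper's assertions that it holds for any initial condition, or for $Q^{1,\eps}_0=o(\eps^{-1})$, are false.
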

    For proof see Appendix~\ref{proof:spat}.

The result shows that the constraint $Q^1=0$ is satisfied in the limit $\eps \to 0$ if $Q^1_0=0$, i.e.\ if the initial datum also satisfies the constraint. For the corresponding constrained momenta $P^1$ it is not possible to derive an estimate of the form \eqref{eq:spat-point-bound}, but only a time integrated form of convergence as given in \ref{item:spat-P1}, which holds for any choice of initial condition. This suggests that $P^1_t$ oscillates and these oscillations become faster as $\eps\to 0$ --  see the proof for precise details. In fact, one can make precise statements about the behaviour of $P^1$ both as $\eps\to 0$ and $t\to\infty$ in a simple linear setting -- this is discussed in Appendix~\ref{app:oscP1}. Note that $\alpha$ in the definition of $V$~\eqref{eq:Valpha} does not play any role in the estimates above (which is not the case in many of the forthcoming results) -- recall the observations from Section~\ref{sec:genremarks}. 

The next result concerns the convergence of the unconstrained variables $(Q^2,P^2)$. Note that in the result below the initial datum for the unconstrained (pre-limit, $\eps$-dependent) variables is the same as the limit. We make this choice here (and throughout this article) for the sake of notational simplicity and all our results for unconstrained variables generalise to general initial datum; see Sec.~\ref{sssec:Spat-InitDatum} below for details. Connections to related literature on spatial confinement  Remark~\ref{rem:SpatCompLit}.
\begin{prop}\label{prop:Behav-Q2P2}
    Given $\eps>0$, let $(Q_t,P_t)$ be the solution of the spatially confined dynamics~\eqref{eq:abc-slow},\eqref{eq:intro-Spat-Cons-Fast} with corresponding initial datum $(Q^i_0,P^i_0)$ for $i=1,2$. 
    Let $(q_t,p_t)\in \R^{2(d-k)}$ with the same initial data $(q_0,p_0)=(Q^2_0,P^2_0)$ evolve according to~\eqref{eq:limit-gen} with $\hat q=0 \in \R^k$. Recall the definition of the potential $V$ from~\eqref{eq:Valpha} and the parameter $\alpha\geq 0$. 
    We have  
\begin{align}\label{eq:Spat-fast}
  \mathbb{E} \biggl[\sup_{t\in[0,T]}  \biggl|\begin{pmatrix}
       q_t - Q^{2}_t \\ p_t-P^{2}_t  
    \end{pmatrix} \biggr|^2 \biggr] \leq
    \begin{cases}
    C_3 e^{C_4 T} \Bigl(
          T \bigl|Q_0^{1}\bigr|^2 + \eps T \bigl|P_0^{1}\bigr|^2 + \eps T^2 \Bigr) \ \  &\text{if $\alpha=0$}\\
     C_5 \left( \eps T + |Q^1_0|^2 + \eps |P^1_0|^2 \right) &\text{if $\alpha>0$}. 
    \end{cases}      
\end{align}
For $\alpha>0$ (i.e. $V$ has a quadratic part) we additionally have the uniform-in-time ($L^1$-)estimate 
\begin{align}\label{eq:Spat-fast-2}
\mathbb{E} \biggl[\sup_{t \geq 0}  \biggl|\begin{pmatrix}
       q_t - Q^{2}_t \\ p_t-P^{2}_t  
    \end{pmatrix} \biggr| \biggr] &\leq
    C_6  \Bigl(\sqrt{\eps} + \sqrt{\eps}  \bigl|P_0^{1}\bigr|  +\bigl|Q_0^{1}\bigr|\Bigr).
    \end{align}
The constants $C_3,C_4, C_5, C_6>0$ above are independent of $\eps$ and $T$.    
\end{prop}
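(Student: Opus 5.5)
The proof compares $(Q^2,P^2)$ and $(q,p)$ through their common linear part. Set $E_t=(q_t-Q^2_t,\,p_t-P^2_t)$. Both processes are driven by the same Brownian motion $W^2$ and have the same initial datum, so the noise cancels. Writing $\nabla_{q^2}V(q)=\alpha q^2+\nabla_{q^2}U(q)$, the error satisfies the random ODE
\[
dE_t = A E_t\,dt + \begin{pmatrix}0\\ \nabla_{q^2}U(Q^1_t,Q^2_t)-\nabla_{q^2}U(0,q_t)\end{pmatrix}dt,\qquad A=\begin{pmatrix}0&1\\-\alpha&-\gamma\end{pmatrix},
\]
with $E_0=0$. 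By variation of constants (Appendix~\ref{app:VarConst}),
\[
E_t=\int_0^t e^{A(t-s)}F_s\,ds,
\]
where $F_s$ is the forcing term above. Assumption~\ref{item:assVLip} gives $|F_s|\le L_U(|Q^1_s|+|E_s|)$. The whole argument is therefore to insert the bound on $\E|Q^1_s|^2$ from Proposition~\ref{prop:SpatCons}\ref{item:spat-Q1} into a Gronwall inequality.

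\textbf{Case $\alpha=0$.} Here $\|e^{At}\|\le C$ uniformly in $t$, since the eigenvalues of $A$ are $0$ and $-\gamma$. Hence
\[
\sup_{r\le t}|E_r|\le C\int_0^t |Q^1_s|\,ds + C\int_0^t \sup_{r\le s}|E_r|\,ds .
\]
Squaring, applying Cauchy--Schwarz, taking expectations and using Gronwall's inequality gives
\[
\E\sup_{t\le T}|E_t|^2\le C T e^{CT}\int_0^T\E|Q^1_s|^2\,ds .
\]
By \eqref{eq:spat-point-bound}, $\int_0^T\E|Q^1_s|^2\,ds\le C_1\eps T+C_2\gamma^{-1}(|Q^1_0|^2+\eps|P^1_0|^2)$. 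Absorbing the polynomial prefactor into $e^{C_4T}$ yields the first line of \eqref{eq:Spat-fast}.

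\textbf{Case $\alpha>0$.} Now $A$ is Hurwitz, so $\|e^{At}\|\le Ke^{-\lambda t}$ for some $\lambda>0$. A naive Gronwall argument would still produce exponential growth unless the Lipschitz feedback is beaten by the decay. I therefore expect the uniform-in-time statement to need a smallness condition, such as $L_U K/\lambda<1$, or a convexity-type condition absorbed into $\alpha$; this is presumably what Theorem~\ref{cor:gronwall} encodes. I would invoke that Gronwall-type result for convolution inequalities
\[
u(t)\le \int_0^t Ke^{-\lambda(t-s)}L_U\bigl(|Q^1_s|+u(s)\bigr)\,ds ,
\]
whose solution is bounded by $\int_0^t e^{-\lambda'(t-s)}|Q^1_s|\,ds$ with $\lambda'=\lambda-KL_U>0$.

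For the $L^2$ supremum bound, the convolution kernel has $L^1$ norm at most $1/\lambda'$. By Cauchy--Schwarz,
\[
\sup_{t\le T}|E_t|^2\le \frac{C}{\lambda'}\int_0^T|Q^1_s|^2\,ds ,
\]
and taking expectations with \eqref{eq:spat-point-bound} gives $C_5(\eps T+|Q^1_0|^2+\eps|P^1_0|^2)$. For the uniform $L^1$ estimate \eqref{eq:Spat-fast-2}, I bound $\sup_{t\ge0}\int_0^t e^{-\lambda'(t-s)}|Q^1_s|\,ds$ differently, since the $L^2$ route leaves a factor $\eps T$. The plan is to split $Q^1_s$ by variation of constants into two parts.

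1. The deterministic transient is bounded by $Ce^{-\gamma s/2}(|Q^1_0|+\sqrt\eps|P^1_0|)$, which is integrable uniformly in time.
2. The forcing and noise part are stationary-like fluctuations of size $\sqrt\eps$. For the bounded forcing, Assumption~\ref{item:assVbound} gives a contribution of order $\eps C_U$, which is fine.

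\textbf{Main obstacle.} The delicate point is the supremum over $t\ge0$ of the noise convolution. A sup over an infinite horizon of a stationary Gaussian process is not bounded in expectation, so crude estimates fail. I would try two things, in order.

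1. The noise part of $Q^1$ is an oscillatory stochastic integral whose time-integral against $e^{-\lambda'(t-s)}$ gains an additional factor of $\eps$ from its fast oscillation; this is the averaging already behind Proposition~\ref{prop:SpatCons}\ref{item:spat-P1}. Exchanging integrals, I would rewrite it as $\int_0^t g_\eps(t,r)\,dW^1_r$ with a deterministic kernel $g_\eps=O(\sqrt\eps\,e^{-\lambda'(t-r)})$.
2. I would then control the supremum using the exponential decay through a Garsia--Rodemich--Rumsey or chaining argument, or via a factorisation method on unit intervals with geometric weights.

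If that fails, I would fall back on the weaker bound with a $\sqrt{\eps\log}$-type correction, and then reconsider the statement.
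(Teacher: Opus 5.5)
Your $\alpha=0$ argument is correct and coincides with the paper's (Step~2a of Theorem~\ref{cor:gronwall} combined with \eqref{eq:spat-point-bound}). Your bound $CTe^{CT}(\eps T+|Q^1_0|^2+\eps|P^1_0|^2)$ is already of the stated form.

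For $\alpha>0$ the proposal is incomplete. You prove the $L^2$ part of \eqref{eq:Spat-fast} only under the extra contractivity hypothesis $KL_U<\lambda$, which is not in the statement, and you do not prove \eqref{eq:Spat-fast-2}. Do not count on Theorem~\ref{cor:gronwall} to supply that hypothesis, because it assumes none. Its Step~2b gets the $t$-uniform factor $e^{2/\eta}$ by applying Gronwall's lemma with the $t$-dependent weight $\nu(s)=e^{-\frac{\eta}{2}(t-s)}$. That step is invalid: $u(t)=\delta+C\int_0^t e^{-\lambda(t-s)}u(s)\,ds$ has solution $u(t)=\delta\bigl(1+\frac{C}{C-\lambda}(e^{(C-\lambda)t}-1)\bigr)$, which grows exponentially when $C>\lambda$. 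So your smallness condition, or some other contractivity of the slow dynamics, is exactly what a convolution-Gronwall argument needs. Without it the method gives only the $e^{CT}$ bound of the $\alpha=0$ case, and neither your argument nor the paper's establishes the $\alpha>0$ line of \eqref{eq:Spat-fast}.

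For \eqref{eq:Spat-fast-2}, the paper passes from the pathwise bound \eqref{eq:L1-gronwall} to \eqref{eq:grownall-alphapos} ``by Fubini''. That is, it replaces $\E\sup_{t}\int_0^t e^{-\frac{\eta}{2}(t-s)}|Q^1_s|\,ds$ by $\int_0^T e^{-\frac{\eta}{2}(T-s)}\E|Q^1_s|\,ds$, which is not an upper bound, and then inserts the Jensen estimate for $\E|Q^1_s|$. Your diagnosis of the obstacle is correct, and the obstacle is fatal rather than technical. For fixed $\eps$ the process $(Q,P)$ is ergodic and the stationary law of $Q^1$ has unbounded support. Hence $\sup_{t\ge0}\int_0^t e^{-\lambda'(t-s)}|Q^1_s|\,ds=\infty$ almost surely.

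Neither of your remedies helps:
\begin{itemize}
\item Once the Lipschitz estimate has replaced the forcing by $|Q^1_s|$, there is no oscillation left to average.
\item Chaining on $[0,T]$ gives at best the $\sqrt{\eps\log T}$-type growth you anticipate.
\end{itemize}

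Heuristically the inequality itself fails. With $Q^1_0=P^1_0=0$, $Q^1$ almost surely makes $O(1)$-amplitude excursions on $[0,\infty)$. Whenever $\nabla_{q^2}U$ genuinely depends on $q^1$, each such excursion pushes $(Q^2,P^2)$ away from $(q,p)$ by an $\eps$-independent amount.

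Under your contractivity assumption, the argument does deliver two things: the pointwise-in-time bound $\sup_{t\ge0}\E|E_t|\le C(\sqrt\eps+|Q^1_0|+\sqrt\eps|P^1_0|)$, and finite-horizon supremum bounds. Treat \eqref{eq:Spat-fast-2} as unproved rather than trying to close it.
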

    The proof can be found in Appendix~\ref{proof:spatslow}.

Note that $\alpha>0$ provides a \emph{surprising} uniform-in-time estimate~\eqref{eq:Spat-fast-2} for the unconstrained variables $(Q^2,P^2)$ -- see Remark~\ref{rem:Unif-InTime} below for details on its derivation. This directly implies convergence in probability in $C([0,\infty],\R^{2(d-k)})$ of $(Q^2,P^2) \to (q,p)$, see~\eqref{eq:conv-prob-spat} below. 

The following Corollary states that pointwise-in-time estimates as well as convergence in probability can be extracted from the results above. Similar pointwise results also hold throughout the article for every other asymptotic limit with proofs exactly as below. 

\begin{corollary}[Convergence in probability and pointwise in time bounds] \label{cor:point-time-conv-prob}
    Let $(Q_t,P_t)$ and $(q_t,p_t)$ be as above. For any $t \geq 0$ we have  
    \begin{align*}
        &\mathbb{E} \biggl[  \left|\begin{pmatrix}
       q_t - Q^{2}_t \\ p_t-P^{2}_t  
        \end{pmatrix} \right|^2 \biggr] \leq 
        \begin{cases}
        C_3 e^{C_4 t} \Bigl(
          t |Q_0^{1}|^2 + \eps t |P_0^{1}|^2 + \eps t^2 \Bigr) \ \  &\text{if $\alpha=0$}\\
          C_5  \left( \eps t + |Q^1_0|^2 + \eps |P^1_0|^2 \right) &\text{if $\alpha>0$},
          \end{cases}
\end{align*}
Additionally if $\alpha>0$, we have the time independent $L^1$-estimate
\begin{equation*}
    \mathbb{E} \biggl[ \biggl|\begin{pmatrix}
       q_t - Q^{2}_t \\ p_t-P^{2}_t  
    \end{pmatrix} \biggr| \biggr] \leq
    C_6  \Bigl(\sqrt{\eps} + \sqrt{\eps}  \bigl|P_0^{1}\bigr|  +\bigl|Q_0^{1}\bigr|\Bigr)
\end{equation*}
where $C_i >0, i\in\left\{1,\ldots,6 \right\}$ are independent of $t$ and $\eps$.
In particular, if $Q^1_0=0$, then for any $t\geq 0$
\begin{align*}
     Q^1_t \xrightarrow{\eps\to 0} 0 \text{ in probability}, 
\end{align*}
and in $C([0,\infty],\R^{2(d-k)})$ we have 
\begin{align}\label{eq:conv-prob-spat}
     (Q^2,P^2) \xrightarrow{\eps\to 0} (q,p) \text{ in probability}.  
\end{align}
 
\end{corollary}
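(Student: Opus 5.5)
The corollary follows from Propositions~\ref{prop:SpatCons} and~\ref{prop:Behav-Q2P2} by elementary monotonicity and Markov/Chebyshev arguments; no new estimates on the dynamics are needed.

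\emph{Pointwise-in-time bounds.} Fix $t\geq 0$ and apply Proposition~\ref{prop:Behav-Q2P2} with horizon $T=t$. For every realisation,
\[
\left|\begin{pmatrix} q_t - Q^2_t\\ p_t - P^2_t\end{pmatrix}\right|^2 \leq \sup_{s\in[0,t]}\left|\begin{pmatrix} q_s - Q^2_s\\ p_s - P^2_s\end{pmatrix}\right|^2 .
\]
Taking expectations and using \eqref{eq:Spat-fast} with $T=t$ gives the stated $L^2$ bound in both cases $\alpha=0$ and $\alpha>0$. When $\alpha>0$, the same comparison with $\sup_{s\geq 0}$, combined with \eqref{eq:Spat-fast-2}, gives the time-independent $L^1$ bound. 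The constants are exactly $C_3,\dots,C_6$, so they do not depend on $t$ or $\eps$.

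\emph{Convergence of $Q^1_t$.} Assume $Q^1_0=0$. Then \eqref{eq:spat-point-bound} reduces to $\E[|Q^1_t|^2]\leq C_1\eps + C_2 e^{-\gamma t}\eps|P^1_0|^2 \to 0$. Chebyshev's inequality gives $\mathbb P(|Q^1_t|>\delta)\leq \delta^{-2}\E[|Q^1_t|^2]\to 0$ for every $\delta>0$.

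\emph{Convergence in probability in $C([0,\infty],\R^{2(d-k)})$.} I would equip this path space with the metric of locally uniform convergence,
\[
\rho(x,y)=\sum_{n\geq 1}2^{-n}\Bigl(1\wedge \sup_{t\in[0,n]}|x_t-y_t|\Bigr).
\]
Convergence in $\rho$-probability is equivalent to $\sup_{t\in[0,n]}|(q,p)_t-(Q^2,P^2)_t|\to 0$ in probability for each fixed $n$. By Chebyshev's inequality this in turn follows from the $L^2$ bound \eqref{eq:Spat-fast} with $T=n$ and $Q^1_0=0$: the right-hand side is then $C_3e^{C_4 n}(\eps n|P^1_0|^2+\eps n^2)$ if $\alpha=0$, or $C_5(\eps n+\eps|P^1_0|^2)$ if $\alpha>0$, and both tend to $0$ as $\eps\to 0$ for fixed $n$.

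To conclude, fix $\delta>0$ and choose $N$ with $2^{-N}<\delta/2$. Then
\[
\mathbb P\bigl(\rho>\delta\bigr)\leq \mathbb P\Bigl(\sup_{t\in[0,N]}\bigl|(q,p)_t-(Q^2,P^2)_t\bigr|>\delta/2\Bigr)\to 0 .
\]
If $\alpha>0$ and the space is read with the uniform metric on $[0,\infty)$, Markov's inequality applied to \eqref{eq:Spat-fast-2} with $Q^1_0=0$ gives $\mathbb P(\sup_{t\geq0}|\cdot|>\delta)\leq C_6\delta^{-1}\sqrt{\eps}\,(1+|P^1_0|)\to 0$ directly.

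The only point needing care is how to interpret the topology on $C([0,\infty])$. For $\alpha=0$ the constants blow up as $T\to\infty$, so only the locally uniform topology is available and the truncation argument above is required. For $\alpha>0$ the uniform-in-time $L^1$ estimate \eqref{eq:Spat-fast-2} yields the stronger uniform statement.
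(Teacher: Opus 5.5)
Your proposal is correct and follows the paper's argument. The pointwise bounds come from dominating the time-$t$ error by the supremum in Proposition~\ref{prop:Behav-Q2P2}. Convergence in probability comes from Markov/Chebyshev; the paper goes via Jensen and then Markov, which is equivalent. Path-space convergence for $\alpha>0$ comes from Markov applied to the uniform-in-time bound \eqref{eq:Spat-fast-2}.

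Your truncation argument in the locally uniform metric is a genuine supplement. The paper's proof invokes only \eqref{eq:Spat-fast-2}, so it tacitly establishes the path-space claim only for $\alpha>0$. You also obtain it for $\alpha=0$ from the finite-horizon $L^2$ bound \eqref{eq:Spat-fast}.
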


\begin{proof}
It turns out that our pathwise results for the unconstrained variables in Proposition~\ref{prop:SpatCons} (and most other estimates for unconstrained variables in the rest of this article as well) are of the form 
\begin{align}
    \E \biggl[ \sup\limits_{t \in [0,T]} |X_t - \bar X_t|^2 \biggr] \leq f(\eps,T)\,, \label{eq:pathest-1}
\end{align}
where $X=(q-Q^2,p-P^2)^\top$ as in \eqref{eq:Spat-fast}-\eqref{eq:Spat-fast-2}.  
Then (\ref{eq:pathest-1}) immediately implies the pointwise-in-time estimate via
\begin{align*}
    \E \left[ |X_t - \bar X_t|^2 \right] \leq  \E \left[  \sup\limits_{r \in [0,t]} |X_r - \bar X_r|^2 \right]   \leq f(\eps,t)
\end{align*}
for any $t>0$.
For convergence in probability, note that by applying Jensen's inequality to the pointwise bound above we find 
$\E \left[ |X_t - \bar X_t| \right] \leq \sqrt{f(\eps,t)}\,$
and using Markov's inequality leads to 
\begin{align*}
    \Pr\bigl(|X_t - \bar X_t| > \eta \bigr) \leq \frac{1}{\eta} 
   \E \left[ |X_t - \bar X_t| \right] \leq \frac{\sqrt{f(\eps,t)} }{\eta}.
\end{align*}
for any $\eta>0$. 
In other words, if $f(\eps,t) \to 0$ as $\eps \to 0$, then $X_t$ converges to $\bar X_t$ in probability as $\eps \to 0$. For the pathwise convergence in $C([0,\infty],\R^{2(d-k)})$ we use the uniform-in-time estimate~\eqref{eq:Spat-fast-2} together with Markov's inequality. 
and the argument is the same as above.
Note that similar ideas as above also work for the $L^1$-estimate~\eqref{eq:Spat-fast-2}. 
\end{proof}

The following remarks discuss the uniform-in-time estimates for $\alpha>0$, related literature and the role of initial data. 

\begin{rem}[Uniform in time estimate]\label{rem:Unif-InTime}
 The estimates for the unconstrained variables $(Q^2,P^2)$ are derived  using Gronwall's inequality together with the quantitative results obtained for the constrained variable $Q^1_t$. Consequently, as is typical when using  Gronwall's inequality, the estimate involves a constant $C=C(T)$ which is typically exponential, where $T$ is the final time of the considered path. Nevertheless, if the potential energy function $V$ is quadratic in $Q^2$, i.e. $\alpha>0$ (recall the assumptions on the potential in Section~\ref{sec:Ass-Pot}),  we are able to prove the  $L^1$-bound~\eqref{eq:Spat-fast-2} which is uniform-in-time. This is remarkable, but in line with the limit invariant measure (see Proposition \ref{prop:invmeas-spat}), 
 which also requires $V$ growing at least quadratically at infinity. This is due to the fact that the $L^1$ bound allows us to directly estimate the integrals, while the $L^2$-estimate~\eqref{eq:Spat-fast} requires using the Cauchy-Schwarz inequality, which introduces the dependence on the considered time interval, see Theorem~\ref{cor:gronwall} and its proof for details. Note that by Jensen's inequality the $L^2$-bound implies convergence in $L^1$ sense with a similar bound. 

 Nevertheless, there are a few caveats for this surprising uniform-in-time estimate. First, we were only able to prove a uniform-in-time $L^1$-estimate and not a uniform-in-time $L^2$-estimate when $\alpha>0$.  

 Second, this  bound works when dealing with linear confinement forces (i.e. $\xi$ introduced in~\eqref{eq:stuff-Ham-spat} is linear), which leads to same additive noise in the pre-limit unconstrained variables $(Q^2,P^2)$ and the limit $(q_t,p_t)$. Consequently, the noise terms cancel when dealing with the difference of these two, which allows us to prove an $L^1$-estimate. However, if we were working with more complex settings (such as nonlinear confinement), then the noise terms would not cancel, and we not expect an $L^1$-estimate to work. A more detailed study of such cases is left to future work.
\end{rem}

\begin{rem}[Comparison to related literature]\label{rem:SpatCompLit} 
Such systems have been studied for \emph{nonlinear} constraints in the Hamiltonian setting \cite{RubinUngar57,bornemann1997homogenization,froese2001realizing,takens2006motion,klar2021second}, and the full Langevin setting \cite{Reich00} using formal asymptotic expansions (often) based on transformations of the system to action-angle variables and an analysis of adiabatic invariants. A specific feature of the deterministic case is that the resulting correction potential depends on the initial conditions via the  action variables that are conserved quantities under the highly oscillatory dynamics. While these works provide a characterisation of the constrained limit dynamics, they do not provide quantitative convergence rates.   
\end{rem}

\subsubsection{Initial conditions and unbounded energy}\label{sssec:Spat-InitDatum}

If $Q^1_0=0$ then, as expected, $Q^1_t\to 0$ as $\eps\to 0$. Nevertheless, by Proposition~\ref{prop:SpatCons}, the constrained momentum $P^1_t$ fluctuates for any choice of initial condition; see also the example in Appendix~\ref{app:oscP1}.

This is in line with similar findings for deterministic Hamiltonian systems, e.g. \cite{bornemann1997homogenization,froese2001realizing}: Both  $Q^1_t$ and $P^1_t$ oscillate with increasing frequency if $Q^1_0 \neq 0$ as can be seen from \eqref{eq:Q1-sol}--\eqref{eq:P1-sol}. The position variable $Q^1_t$ has decreasing amplitude as $t \to \infty$, while the amplitude of $P^1_t$ is non-decreasing for any choice of initial data. For deterministic systems, under our ``flatness'' assumption on the constraint manifold $\xi^{-1}(0)$, the spatial confinement realises the constrained motion $(Q^2_t,P^2_t)$ if and only if the initial energy 
\[
H(Q_0,P_0)= V(Q_0) + \frac{1}{2\eps} |Q^1_0|^2 + \frac12 |P_0|^2 
\]
remains bounded as $\eps\to 0$; see \cite[Thm.~2.1]{froese2001realizing} and \cite[Ch.~II.3.1]{bornemann1998homogenization}; this condition that is also enforced in the work \cite{Reich00} on highly oscillatory Langevin dynamics, requires $Q^1_0=\bigO(\sqrt{\eps})$. In general, for diverging initial energy, the limit motion of a deterministic Hamiltonian system ceases to satisfy the constraint as has been demonstrated in \cite[Ch.~II.1.9]{bornemann1997homogenization}. 

In our case, under the assumptions on friction and noise coefficients (specifically: the fluctuation-dissipation relation), we only need that $Q^1_0 \to 0$ as $\eps \to 0$; the analysis even reveals that the convergence holds for initial momentum $P^1_0= \littleO(\eps^{-1/2})$. As a consequence, constrained motion with ``flat'' coordinate projection constraints can be realised even for infinite initial energy. 

We stress that, even though the aforementioned and all subsequent results are formulated for deterministic initial conditions, our results straightforwardly generalise to random $\eps$-dependent initial conditions. In this case $|Q^1_0|^2, |P^1_0|^2$ in the above results will be replaced by $\E[|Q^{1,\eps}_0|^2],\E[|P^{1,\eps}_0|^2]$ respectively. The result in Proposition \ref{prop:SpatCons} \ref{item:spat-P1}  also holds as long as  $Q^{1,\eps}_0 = \littleO(\eps^{-1})$  and $P^{1,\eps}_0 = \littleO(\eps^{-\frac12})$ in probability. Moreover, if we allow $(q_0,p_0) \neq (Q^2_0,P^2_0) := \lim\limits_{\eps \to 0} (Q^{2,\eps}_0,P^{2,\eps}_0)$ in Proposition \ref{prop:Behav-Q2P2} then \eqref{eq:Spat-fast} and \eqref{eq:Spat-fast-2} respectively read
\begin{align*}
    \mathbb{E} \biggl[\sup_{t\in[0,T]}  \biggl|\begin{pmatrix}
       q_t - Q^{2}_t \\ p_t-P^{2}_t  
    \end{pmatrix} \biggr|^2 \biggr] \leq
    C_3 e^{C_4 T}\left\{
        \E\left[\left|
        \begin{pmatrix}
           q_0 - Q^{2,\eps}_0 \\ p_0 - P^{2,\eps}_0
        \end{pmatrix} \right|^2\right] + 
          T \E\left[\bigl|Q_0^{1,\eps}\bigr|^2\right] + \eps T \E\left[\bigl|P_0^{1,\eps}\bigr|^2 \right]+ \eps T^2 \right\} 
    \end{align*}
    for $\alpha=0$, and 
    \begin{align*}
    \mathbb{E} \biggl[\sup_{t\in[0,T]}  \biggl|\begin{pmatrix}
       q_t - Q^{2}_t \\ p_t-P^{2}_t  
    \end{pmatrix} \biggr|^2 \biggr] \leq
     e^{- \eta r}\E\left[\left|
        \begin{pmatrix}
           q_0 - Q^{2,\eps}_0 \\ p_0 - P^{2,\eps}_0
        \end{pmatrix} \right|^2\right] +
    C_5  \left(T \eps + \eps  \E\left[\bigl|P_0^{1,\eps}\bigr|^2\right]  +
           \E\left[\bigl|Q_0^{1,\eps}\bigr|^2\right]\right)
    \end{align*}
    for $\alpha>0$. Additionally, in the latter case, we have the uniform in time   $L^1$ bound 
    \begin{align*}
        &\mathbb{E} \biggl[\sup_{t \geq r}  \biggl|\begin{pmatrix}
       q_t - Q^{2}_t \\ p_t-P^{2}_t  
    \end{pmatrix} \biggr| \biggr] \leq e^{- \mfrac{\eta}{2}r}\E\left[\left|
        \begin{pmatrix}
           q_0 - Q^{2,\eps}_0 \\ p_0 - P^{2,\eps}_0
        \end{pmatrix} \right|\right] +
    C_6  \left(\sqrt{\eps} + \sqrt{\eps}  \E\left[\bigl|P_0^{1}\bigr|\right]  +
           \E\left[\bigl|Q_0^{1}\bigr|\right]\right),
\end{align*}
where $\eta \coloneqq  \left(\gamma - \Re(\sqrt{\gamma^2-4\alpha})\right) \in (0,\gamma]$ and $C_i >0, i \in \left\{1,\ldots,6 \right\}$ are independent of $\eps, T.$

\subsubsection{Steady state under spatial confinement}

As stated in the introduction, constraints are often used to sample probability measures, so we will now discuss the steady state of the limiting dynamics under spatial confinement. (We use $\mathcal P(\R^{m})$ to denote probability measures on $\R^m$.) 
    The spatially confined Langevin dynamics~\eqref{eq:abc-slow},\eqref{eq:intro-Spat-Cons-Fast} admits the steady state
    \begin{align}\label{eq:SpatPreLimSte}
     d\mu^\eps(q,p) \coloneqq \frac{1}{Z^\eps} \exp\left( - \beta\left(V(q) + \frac{1}{2\eps}|q^1|^2 + \frac{1}{2}|p|^2\right)\right)dqdp
    \end{align}
where $Z^\eps$ is the normalisation constant which ensures that $\mu^\eps\in \mathcal P(\R^{2d})$. 
Define $\xi:\R^d\to \R^k$ as $\xi(q)=q^1$. The probability measure $\mu^\eps\in \mathcal P(\R^{2d})$ converges weakly to $\mu\in \mathcal P(\xi^{-1}(0)\times\R^d)$ given by 
\begin{align*}
    d\mu(q^2,p)\coloneqq \frac{1}{Z} \exp\left( - \beta\left(V(0,q^2) +\frac{1}{2}|p|^2\right)\right) dq^2 dp
\end{align*}
where $Z$ is the normalisation constant.  
In particular, for $f\in C_b(\R^d)$ with $f=f(q)$ we have
\begin{align*}
    \lim_{\eps\to 0} \int_{\R^{2d}}f(q)d\mu^\eps(q,p) = \int_{\R^{d-k}} f(0,q^2) \frac{1}{\hat Z} \exp\left(-\beta V(0,q^2)\right)dq^2,
\end{align*}
with $\hat Z\coloneqq \int
\exp(-\beta V(0,q^2))dq^2$ as detailed in Proposition~\ref{prop:invmeas-spat} in Appendix~\ref{sec:Steady}. 

While the spatially confined dynamics has the correct constrained position marginal, the oscillatory momentum component $P^1_t$ that, in contrast to the damped spatial component $Q^1_t$, is not decaying makes the dynamics numerically stiff and hence difficult to simulate. 

\begin{rem}
    The fact that $p^1$ is not constrained implies that, if $\xi(q)$ is nonlinear in $q$, such that $\nabla\xi$ is varying when restricted to the constraint manifold $\xi^{-1}(0)$, then marginalisation over the momenta will lead to an extra correction potential $-(2\beta^{-1})\log \det(\nabla\xi^\top\nabla\xi)$ in addition to the Fixman potential that is the result of spatial confinement; see \cite[Sec.~3.4]{Hartmann2007} or \cite[Rem 3.3]{LelievreRoussetStoltz12}. The pathwise analysis would be beyond the scope of this paper, but we briefly mention that the correction potential due to the oscillatory momenta will not be there in the phase-space confinement case that is discussed next for coordinate projections.  
\end{rem}


\subsection{Phase-space confinement} \label{sec:phase-space}
The following result investigates the behaviour of the phase-space confined Langevin dynamics~\eqref{eq:abc-slow},\eqref{eq:intro-Phase-Cons-Fast} as $\eps\to 0$. The first part of this result deals with the behaviour of the constrained variables $(Q^1_t,P^1_t)$, which we both expect to converge to $0$ due to confinement in both these variables. The second part of this result discusses the behaviour of $(Q^2_t,P^2_t)$, which converge to~\eqref{eq:limit-gen} as in the spatial confinement setting. Making the connection to~\eqref{eq:abc-fast}, here we study the limit of equation \eqref{eq:abc-fast} with $a,b,c\to\infty$.

\begin{prop}[Phase-space constrained limits]\label{prop:PS-Cons}
    Given $\eps>0$, let $(Q_t,P_t)$ be the solution to~\eqref{eq:abc-slow},\eqref{eq:intro-Phase-Cons-Fast} with corresponding initial datum $(Q^i_0,P^i_0)$ for $i=1,2$.
    \begin{enumerate}[label=(\roman*)]
        \item\label{item:phase-Q1} 
        For any $t\geq 0$
        \begin{align}\label{eq:PhasePathBound}
             \max\biggl\{\E\Bigl[|Q^{1}_t|^2\Bigr] , \E\Bigl[ |P^{1}_t|^2\Bigr]\biggr\} &\leq C_1 \left(\eps +  e^{- \frac{t}{\eps} C_2} \Bigl(   \bigl|Q^{1}_0\bigr|^2+ \bigl|P^{1}_0\bigr|^2  \Bigr)\right), 
        \end{align}
        where $C_1, C_2>0$ are independent of $\eps$ and $r$.

        \item\label{item:phase-Q2P2} Let $(q_t,p_t)\in \R^{2(d-k)}$ with initial data $(q_0,p_0)=(Q^2_0,P^2_0)$ evolve according to~\eqref{eq:limit-gen} with $\hat q=0 \in \R^k$. Recall the definition of the potential $V$ from~\eqref{eq:Valpha} and the parameter $\alpha\geq 0$. In general (i.e. for any $\alpha\geq 0$) we have 
        \begin{align}
            \label{eq:Phase-fast}
            \mathbb{E} \biggl[ \sup_{t\in [0,T]}\left|\begin{pmatrix}
            q_t - Q^{2}_t \\ p_t-P^{2}_t  
            \end{pmatrix} \right|^2 \biggr] \leq 
            \begin{cases}
            C_3 e^{C_4  T} \Bigl( \eps T^2 + \eps T \Bigl( |Q_0^{1}|^2 +  |P_0^{1}|^2 \Bigr)\Bigr) \ \ &\text{if $\alpha=0$,}\\
            \eps C_5 \left( T + |Q^1_0|^2 + |P^1_0|^2 \right) &\text{if $\alpha>0$}.
            \end{cases}
        \end{align}
        In the case when $\alpha>0$ (i.e. $V$ has a quadratic part) and $\eps$ small enough we additionally have the uniform-in-time $L^1$- estimate
        \begin{align}\label{eq:phase-fast-L1}
            \mathbb{E} \biggl[ \sup_{t \geq r}\left|\begin{pmatrix}
            q_t - Q^{2}_t \\ p_t-P^{2}_t  
            \end{pmatrix} \right| \biggr] \leq \sqrt{\eps} C_6  + 
            \eps C_7 e^{- C_8 r}\Bigl( |Q_0^{1}| +  |P_0^{1}| \Bigr)       
            \end{align}
        Here 
        the constants $C_i>0$ for $i\in \{3,\ldots,8\}$ are independent of $\eps>0$ and $T>0$.
    \end{enumerate}
\end{prop}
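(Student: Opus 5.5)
The plan is to treat the constrained and unconstrained parts separately. The constrained bound \ref{item:phase-Q1} comes from an explicit variation-of-constants analysis of each coordinate pair $X=((Q^1_t)^i,(P^1_t)^i)$. That bound is then fed into a Gronwall-type comparison for $(Q^2,P^2)$ against \eqref{eq:limit-gen}.

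\textbf{Step 1: the constrained pair.} I write $X_t$ via \eqref{eq:intro-VarCons} with $a=c=1+\frac1\eps$ and $b=\frac1\eps$, so the drift matrix is
\[
A_\eps=\begin{pmatrix}0 & 1+\frac1\eps\\ -(\alpha+\frac1\eps) & -\gamma(1+\frac1\eps)\end{pmatrix}.
\]
Its trace is $-\gamma(1+\frac1\eps)$ and its determinant is $(1+\frac1\eps)(\alpha+\frac1\eps)\sim\eps^{-2}$. Hence $\eps A_\eps\to\begin{pmatrix}0&1\\-1&-\gamma\end{pmatrix}$, which is a fixed Hurwitz matrix. Consequently there are $\eps$-independent constants with
\[
\|e^{A_\eps t}\|\le K e^{-C_2 t/\eps}.
\]
A uniform bound for the perturbed matrix $\eps A_\eps=A_0+O(\eps)$ holds for small $\eps$, for example by a Lyapunov-matrix argument. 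The three terms of \eqref{eq:intro-VarCons} are then estimated separately. The initial term gives $K^2e^{-2C_2t/\eps}|X_0|^2$. The forcing term is handled with Assumption~\ref{item:assVbound}, using $\nabla_{q^1}V=\alpha Q^1+\nabla_{q^1}U$ with the linear part absorbed into $A_\eps$. It is bounded by $\int_0^t Ke^{-C_2(t-s)/\eps}C_U\,ds\le K C_U\eps/C_2$, so its square is $O(\eps^2)$. The stochastic term, by the It\^o isometry, is at most $2\gamma\beta^{-1}K^2\int_0^t e^{-2C_2 s/\eps}ds=O(\eps)$. Summing over $i=1,\dots,k$ yields \eqref{eq:PhasePathBound}.

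\textbf{Step 2: the unconstrained comparison.} I set $Z_t=(q_t-Q^2_t,p_t-P^2_t)$. The noises cancel, so $Z$ solves a random linear ODE
\[
\dot Z = A Z + \begin{pmatrix}0\\ \nabla_{q^2}U(Q^1_t,Q^2_t)-\nabla_{q^2}U(0,q_t)\end{pmatrix},\qquad A=\begin{pmatrix}0&1\\-\alpha&-\gamma\end{pmatrix}.
\]
By Assumption~\ref{item:assVLip}, the forcing is bounded by $L_U(|Q^1_t|+|Z_t|)$. Variation of constants then gives
\[
|Z_t|\le\int_0^t\|e^{A(t-s)}\|\,L_U\bigl(|Q^1_s|+|Z_s|\bigr)\,ds.
\]
I would invoke the general Gronwall-type result (Theorem~\ref{cor:gronwall}) with input $g(s)=|Q^1_s|$, as was done for Proposition~\ref{prop:Behav-Q2P2}.

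For $\alpha=0$, $\|e^{At}\|$ is bounded. Cauchy--Schwarz and Gronwall then give
\[
\E\Bigl[\sup_{t\le T}|Z_t|^2\Bigr]\le C e^{CT}\,T\int_0^T\E\bigl[|Q^1_s|^2\bigr]ds,
\]
and Step 1 gives $\int_0^T\E[|Q^1_s|^2]\,ds\le C_1\bigl(\eps T+\tfrac{\eps}{C_2}(|Q^1_0|^2+|P^1_0|^2)\bigr)$. The factor $\eps$ in front of the initial data comes precisely from integrating $e^{-C_2 s/\eps}$, and this is why it improves on the spatial case.

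For $\alpha>0$, $\|e^{At}\|\le Ke^{-\eta t/2}$ with $\eta$ as in Sec.~\ref{sssec:Spat-InitDatum}. I would use exponentially weighted Cauchy--Schwarz, together with the structure of Theorem~\ref{cor:gronwall}, to get the polynomial bound. For the $L^1$ uniform-in-time estimate \eqref{eq:phase-fast-L1}, I would take expectations directly without Cauchy--Schwarz and use $\E|Q^1_s|\le\sqrt{C_1\eps}+\sqrt{C_1}e^{-C_2s/(2\eps)}(|Q^1_0|+|P^1_0|)$. Convolving against $e^{-\eta(t-s)/2}$ gives $\sqrt\eps$ plus an $\eps e^{-C_8 r}$ initial-data term, valid for $t\ge r$.

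\textbf{Main obstacle.} The delicate point is making the Gronwall step close under $\sup_t$ inside the expectation, uniformly in time, when $\alpha>0$. This requires the Lipschitz constant $L_U$ to be dominated by the decay rate $\eta/2$ of $e^{At}$. In the paper I expect this is exactly what Theorem~\ref{cor:gronwall} packages, presumably under its own smallness hypothesis or through the positivity of $V$. I would rely on that theorem as in Proposition~\ref{prop:Behav-Q2P2}. If its form does not fit, I would first check whether a small-$L_U$ condition relative to $\eta$ is needed.
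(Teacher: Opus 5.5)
Your Step~1 is correct and differs from the paper's route. The paper uses the explicit eigen-decomposition (Proposition~\ref{prop:sollangevin}), written out only for $\alpha=0$. It then bounds each term separately using $|\theta|\sim\eps^{-1}$ and $a/|\theta|,\,c/|\theta|=\bigO(1)$. You instead bound $e^{A_\eps t}$ as a whole through the Hurwitz limit of $\eps A_\eps$. Your version is more robust: it covers all $\alpha\ge0$ at once and is uniform in $\gamma>0$. At $\gamma=2$ one has $|\theta|=2\sqrt{1+1/\eps}$, so the paper's scalings fail. Its term-by-term bound on the stochastic integral then gives only $\bigO(1)$, because splitting the two exponentials loses a cancellation that your whole-matrix bound keeps. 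Step~2 for $\alpha=0$ is exactly the paper's argument and is fine.

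For $\alpha>0$ there is a genuine gap, and your suspicion is correct: citing Theorem~\ref{cor:gronwall} does not close it. That theorem has no smallness hypothesis. Its $\alpha>0$ proof applies Gronwall with a weight $\nu(s)=e^{-\eta(t-s)/2}$ that depends on $t$, and asserts that the convolution $\delta(t)$ is monotone; neither step is legitimate. Done correctly, with $\|e^{Kt}\|_F\le C_Ke^{-\eta t/2}$, you apply the standard Gronwall lemma to $e^{\eta t/2}|Z_t|$. This gives
\[
|Z_t|\le \delta(t)+C_KL_U\int_0^t e^{(C_KL_U-\eta/2)(t-s)}\,\delta(s)\,ds .
\]
So the linear-in-$T$ $L^2$ bound, and any time-uniform bound, need $C_KL_U<\eta/2$. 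Positivity of $V$ does not supply this contraction. Without it only an $e^{CT}$ bound survives, so you must either add this hypothesis or accept exponential growth in $T$.

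There is a second gap, which you did not flag, in \eqref{eq:phase-fast-L1}. ``Taking expectations directly'' and convolving $\E|Q^1_s|$ controls $\sup_{t\ge r}\E|Z_t|$, not $\E[\sup_{t\ge r}|Z_t|]$. You cannot replace $\E\sup_t$ of $\int_0^t e^{-\eta(t-s)/2}|Q^1_s|\,ds$ by the supremum over $t$ of the convolution of $\E|Q^1_s|$. The paper's ``by Fubini'' step in Theorem~\ref{cor:gronwall} makes the same interchange. On $[r,T]$ you would need a maximal inequality for the stochastic-convolution part of $Q^1$, and that grows with $T$ (cf.\ Appendix~\ref{app:AlmostSure}).

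On $[r,\infty)$ the bound with the supremum inside fails whenever $\nabla_{q^2}U(q^1,q^2)\neq\nabla_{q^2}U(0,q^2)$ at some point. For fixed $\eps$ the dynamics is ergodic, so almost surely $(Q^1,Q^2)$ eventually spends a unit time interval near such a point. There the forcing of $Z$ is of order one, which forces $\sup_{t\ge r}|Z_t|\ge c$ with $c$ independent of $\eps$. What your argument does give, under $C_KL_U<\eta/2$, is a pointwise uniform-in-time bound $\sup_{t\ge r}\E|Z_t|\le C\bigl(\sqrt\eps+\eps e^{-C'r}(|Q^1_0|+|P^1_0|)\bigr)$.
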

    For proof see Appendix~\ref{App:PS-cons-proof}.    

Note that, as in the spatial-confinement setting, we arrive at the uniform-in-time estimate for $\alpha>0$, with an improved prefactor; for details see Remark~\ref{rem:Unif-InTime}.  Furthermore, the phase-space confined Langevin converges for any choice of initial conditions, which is in stark contrast to the spatial-confinement setting discussed in the previous discussion. We discuss the initial conditions in Sec.~\ref{sssec:PS-Init} below. 

As in Corollary~\ref{cor:point-time-conv-prob}, pointwise-in-time estimates also follow in this setup. In particular, following the same proof as Corollary~\ref{cor:point-time-conv-prob}, it follows that for any $t>0$ we have
\begin{align*}
    Q^1_t\to 0, \ P^1_t\to 0, \text{ in probability as } \eps\to 0,
\end{align*}
for any choice of initial data. 
Moreover,
\begin{align}\label{eq:conv-prob-phasespace}
     (Q^2_t,P^2_t) \to (q_t,p_t)  \text{ in probability as } \eps\to 0 \text{ in } C([0,\infty],\R^{2(d-k)}) \text{ for any } Q^1_0,P^1_0 \in \R^k.
\end{align}
This result can be directly compared with the result of~\cite{Katzenberger91} and we refer to Section~\ref{sec:Disc-GenForm} for more details. \\
Following the discussion in Sec.~\ref{sssec:Spat-InitDatum}, our results in Propositions~\ref{prop:SpatCons} and~\ref{prop:Behav-Q2P2} can be readily generalised to the setting of $\eps$-dependent and random initial data, the same goes for initial conditions $(q_0,p_0) \neq (Q^2_0,P^2_0)$ that do not lie on the constraint subspace.

Next, we compare the pathwise and pointwise estimates discussed above as well as the assumptions on the initial conditions in the spatial and phase-space constrained setting.  

\subsubsection{Initial conditions and comparison with spatial confinement}\label{sssec:PS-Init}

    Estimate~\eqref{eq:PhasePathBound} states that the constrained variables $(Q^1_t,P^1_t)$ converge to zero for any positive time, regardless of the choice of initial condition $(Q^1_0,P^1_0)$. This is in stark contrast to the spatial confinement estimates, which require a well-prepared initial datum $Q^1_0=0$  
    and in which case $P^1_t$ oscillates with non-decreasing amplitude. 
The behaviour of the unconstrained variables $(Q^2_t,P^2_t)$ does not depend on the initialisation of the constrained variables $(Q^1_0,P^1_0)$ as becomes clear in the corresponding estimate~\eqref{eq:Phase-fast}--\eqref{eq:phase-fast-L1}.

Allowing for $\eps-$dependent initial data, the initial energy  
\[
H(Q_0,P_0)= V(Q_0) + \frac{1}{2\eps} |Q^1_0|^2 + \frac12\left(1+ \frac{1}{\eps}\right) |P^1_0|^2    + \frac12 |P^2_0|^2
\]
can explode if $P^1_0 = \littleO(\eps^{-\frac12})$ or $Q^1_0 = \littleO(\eps^{-\frac12})$ while all results remain valid. 
Before we conclude this subsection, we briefly discuss the steady state of the limiting dynamics under phase-space confinement.

\subsubsection{Steady state under phase space confinement}

    The phase-space confined Langevin dynamics~\eqref{eq:abc-slow},\eqref{eq:intro-Phase-Cons-Fast} 
     admits the steady state
\begin{align*}
    d\mu^\eps(q,p) = \frac{1}{Z^\eps} \exp\left( - \beta\left(V(q) + \frac{1}{2\eps}|q^1|^2 + \frac{1}{2}|p|^2 + \frac{1}{2\eps}|p^1|^2\right)\right)dqdp
\end{align*}
where $Z^\eps$ is the normalisation constant which ensures that $\mu^\eps\in \mathcal P(\R^{2d})$. Define $\Xi:\R^{2d}\to\R^{2k}$ as $\Xi(q,p)=(q^1,p^1)$. 
Following the proof of Proposition~\ref{prop:invmeas-spat} which discusses the steady state for the spatially confined case, it can be shown that $\mu^\eps\in \mathcal P(\R^{2d})$ converges weakly to $\mu\in \mathcal P(\Xi^{-1}(0))$ given by 
\begin{align*}
    d\mu(q^2,p^2)\coloneqq \frac{1}{Z} \exp\left( - \beta\left(V(0,q^2) +\frac{1}{2}|p^2|^2\right)\right)dq^2dp^2
\end{align*}
where $Z$ is the limiting normalisation constant. 
In particular, for $f\in C_b(\R^d)$ with $f=f(q)$ we have
\begin{align*}
    \lim_{\eps\to 0} \int_{\R^{2d}}f(q)d\mu^\eps(q,p) = \int_{\R^{d-k}} f(0,q^2) \frac{1}{\hat Z} \exp(-\beta V(0,q^2))dq^2,
\end{align*}
with $\hat Z\coloneqq \int
\exp(-\beta V(0,q^2))dq^2$.
Furthermore, the limit~\eqref{eq:limit-gen} with $\hat q=0$ of the unconstrained variables $(Q^2,P^2)$  as described in Proposition~\ref{prop:PS-Cons} admits $\mu$ as a steady state.

This should be contrasted with the steady state of the position confined system: For the case of a coordinate projection constraint the $q$-marginal of the steady states are the same as for the phase-space confined dynamics. Nevertheless, the oscillatory momentum component $P^1_t$ is dissipative, which is advantageous in terms of numerical discretisation \cite{kane2000variational}. 

\begin{rem}
    The stabilisation of the constraint subspace by penalising higher order (also: hidden) constraints is a well-known phenomenon for deterministic Hamiltonian systems with Lagrange multipliers (e.g. \cite{barth1995algorithms}) or impetus-striction (e.g. \cite{gonzalez2001multi}); see also \cite{kunkel2006differential} and the references therein. 
\end{rem}

\section{Constraining via (physical) parameter limits}\label{sec:PhysicalLimits}

In this section we prove quantitative convergence results for physical parameters in four settings outlined in the introduction: zero mass, infinite mass, and infinite friction with/without fluctuation dissipation. Throughout this section we assume that the end-time $T>1$ to avoid lower order terms and simplify the final form of the estimates. The complete form of the estimates for any $T>0$ is available in the proofs. 
Furthermore, to simplify presentation, when discussing competing terms, for instance $\eps T^2$ and $\eps T$ which scale differently as $\eps\to 0$ and $T\to\infty$, we always present the lower-order terms in $\eps$. This will make the final estimates in the results considerably more readable. Interested readers can refer to the proofs for precise scaling in $\eps$ and $T$.

\subsection{Zero mass limit} \label{sec:zeromass}

We now discuss the $\eps\to 0$ limit of the zero mass setting~\eqref{eq:abc-slow},\eqref{eq:intro-0Mass-Fast}. As discussed in the introduction, sending mass of the first $k$-particles to zero corresponds to the classical overdamped limit, wherein the corresponding position converges to the overdamped Langevin dynamics while the momenta converges to zero (as the first $k$ particles lose momentum due to vanishing mass). Making the connection to~\eqref{eq:abc-fast}, here we study the limit of~\eqref{eq:abc-fast} with $a=\eps^{-1}$, $b= 0$ and $c=\eps^{-1}$.

\begin{prop}\label{prop:Light}
    Given $\eps>0$, let $(Q_t,P_t)$ be the solution to~\eqref{eq:abc-slow},\eqref{eq:intro-0Mass-Fast}, with corresponding initial data $(Q^i_0,P^i_0)$ for $i=1,2$. Recall the definition of the potential $V$ from~\eqref{eq:Valpha} and the parameter $\alpha\geq 0$. 
    \begin{enumerate}[label=(\roman*)]
        \item\label{item:Light-P1} 
        For any $t\geq 0$ 
        \begin{align}\label{eq:Light-PathBound}
            \E\Bigl[|P_t^{1}|^2\Bigr] \leq 
            \begin{dcases} C_1 \bigl(\eps +  e^{-\frac{C_2}{\eps}t} |P_0^{1}|^2\bigr) \  &\text{if $\alpha=0$}, \\
            C_3 \left(\eps +\eps |Q^1_0|^2  + (e^{-\frac{C_4}{\eps}t} + \eps)  |P^1_0|^2    \right) &\text{if $\alpha>0$},
             \end{dcases}
        \end{align}
        where the constants $C_1, C_2, C_3, C_4>0$ are independent of $\eps$ and $t$.

        \item\label{item:LightSlow} Let $(\hat q_t,q_t,p_t)\in \R^{k}\times \R^{d-k} \times \R^{d-k}$ with initial data $(\hat q_0,q_0,p_0)=(Q^1_0,Q^2_0,P^2_0)$  evolve according to~\eqref{eq:0Mass-q1} and~\eqref{eq:limit-gen}. 
        We have for any $t\leq 0$
        \begin{align}\label{eq:SmallMass-Fast}
        \E\biggl[\Bigl|(Q^{1}_t,Q^{2}_t,P^{2}_t)^\top - (\hat q_t,q_t,p_t)^\top\Bigr|^2\biggr] \leq \begin{dcases}C_5 e^{C_6 t^2}\Bigl(\eps + 
        \bigl|P_0^{1}\bigr|^2  \Bigr) \ &\text{if $\alpha=0$}, \\
        C_7 e^{C_8 t^2}\Bigl(\eps t^2+\eps^2 t^2 \bigl|Q_0^{1}\bigr|^2 + |P^1_0|^2\Bigr)  &\text{if $\alpha>0$},
        \end{dcases}
        \end{align}
        where the constants $C_5,C_6, C_7,C_8>0$ are independent of $\eps>0$ and $t>0$.
    \end{enumerate}
\end{prop}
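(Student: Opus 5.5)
For part \ref{item:Light-P1} I would work componentwise via the variation-of-constants representation \eqref{eq:intro-VarCons}, with the zero-mass choice $a=c=1+\eps^{-1}$, $b=0$. The treatment of the $\alpha\nabla_{q^1}$ term differs between the two cases.

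\textbf{Case $\alpha=0$.} The $P^1$ equation decouples linearly from $Q^1$:
\[
dP^1_t=-\nabla_{q^1}U(Q_t)\,dt-\gamma(1+\eps^{-1})P^1_t\,dt+\sqrt{2\gamma\beta^{-1}}\,dW^1_t .
\]
This is a scalar Ornstein--Uhlenbeck process with rate $\lambda_\eps=\gamma(1+\eps^{-1})$, so
\[
P^1_t=e^{-\lambda_\eps t}P^1_0-\int_0^t e^{-\lambda_\eps(t-s)}\nabla_{q^1}U(Q_s)\,ds+\sqrt{2\gamma\beta^{-1}}\int_0^t e^{-\lambda_\eps(t-s)}\,dW^1_s .
\]
I bound the three terms separately:
\begin{itemize}
\item the initial term is $e^{-2\lambda_\eps t}|P^1_0|^2$;
\item by Assumption~\ref{item:assVbound}, the drift integral is at most $C_U/\lambda_\eps=\bigO(\eps)$, so its square is $\bigO(\eps^2)$;
\item by the It\^o isometry, the stochastic integral has second moment $\gamma\beta^{-1}/\lambda_\eps=\bigO(\eps)$.
\end{itemize}
This gives the first line of \eqref{eq:Light-PathBound}.

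\textbf{Case $\alpha>0$.} The term $-\alpha Q^1$ couples the pair, and I would use the full $2\times2$ matrix
\[
A=\begin{pmatrix}0&a\\-\alpha&-\gamma a\end{pmatrix},\qquad a=1+\eps^{-1}.
\]
Its eigenvalues are $\lambda_\pm=\tfrac{a}{2}\bigl(-\gamma\pm\sqrt{\gamma^2-4\alpha/a}\bigr)$, so $\lambda_-\approx-\gamma a$ (fast) and $\lambda_+\approx-\alpha/\gamma$ (slow, $\bigO(1)$). I would compute the entries of $e^{At}$ explicitly by spectral decomposition. The $(2,1)$ entry, which carries $Q^1_0$ into $P^1$, has size $\alpha(e^{\lambda_+t}-e^{\lambda_-t})/(\lambda_+-\lambda_-)=\bigO(\eps^{1/2}\cdot\eps^{1/2})$ in the appropriate sense, and this produces $\eps|Q^1_0|^2$. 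The $(2,2)$ entry behaves like $e^{-C t/\eps}$ plus a slow remainder of size $\bigO(\eps)$, which yields $(e^{-C_4t/\eps}+\eps)|P^1_0|^2$. The drift and noise contributions are handled as in the case $\alpha=0$, using $\int_0^t|(e^{A(t-s)})_{22}|\,ds=\bigO(\eps)$ and $\int_0^t|(e^{A(t-s)})_{22}|^2\,ds=\bigO(\eps)$.

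\textbf{Part \ref{item:LightSlow}.} I would derive an equation for $Q^1$ by eliminating the fast momentum. Integrating the $P^1$ equation and substituting $a P^1\,dt=dQ^1$ gives
\[
\gamma(Q^1_t-Q^1_0)=-\int_0^t\nabla_{q^1}V(Q_s)\,ds+\sqrt{2\gamma\beta^{-1}}\,W^1_t-(P^1_t-P^1_0).
\]
Comparing with the integrated form of \eqref{eq:0Mass-q1}, the noise cancels. The difference $\hat q_t-Q^1_t$ is then governed by a Lipschitz drift difference $\int_0^t|\nabla_{q^1}V(Q_s)-\nabla_{q^1}V(\hat q_s,q_s)|\,ds$ together with the error $(P^1_t-P^1_0)/\gamma$, whose second moment is controlled by part~\ref{item:Light-P1} plus $|P^1_0|^2$.

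For $(Q^2,P^2)-(q,p)$ I would reuse the variation-of-constants and Gronwall estimate underlying Theorem~\ref{cor:gronwall}. Adding the two error equations, applying Cauchy--Schwarz to the time integrals, which produces a factor $t$ in front of $\int_0^t$, and then Gronwall gives the $e^{Ct^2}$ growth. The $\eps t^2$ and $\eps^2t^2|Q^1_0|^2$ terms come from integrating the part~\ref{item:Light-P1} bounds in time.

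The main obstacle is the $\alpha>0$ matrix exponential in part~\ref{item:Light-P1}. One has to see that the slow eigenvalue contributes only at order $\eps$ to the $P^1$ component, uniformly in $t$. I would first try explicit eigenvector formulas together with the expansions $\lambda_+-\lambda_-\sim\gamma a$ and $\lambda_+=\bigO(1)$. If that becomes messy, a fallback is to write $P^1$ as an OU process driven by the extra force $-\alpha Q^1$ and bound $\E|Q^1_t|^2$ a priori.
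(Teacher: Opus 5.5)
Your part~\ref{item:Light-P1} follows the paper: the scalar OU representation with rate $\gamma(1+\eps^{-1})$ for $\alpha=0$, and the spectral solution \eqref{eq:p-sol} with the scalings \eqref{eq:scaling-quadpotzeromass} for $\alpha>0$.

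For part~\ref{item:LightSlow} you take a genuinely different and cleaner route. For $\alpha>0$ the paper writes $Q^1_t$ through the $\theta$-representation and $\hat q_t$ as an OU mild solution with rate $\alpha/\gamma$. It then compares the kernels term by term via Taylor expansions such as $|e^{-\frac{\alpha}{\gamma}\tau}-\frac{\gamma a}{\theta}e^{-\frac12(c-\theta)\tau}|=\bigO(\eps\tau)$, which is where its $\eps^2t^2|Q^1_0|^2$ and $\eps^2t^4$ terms come from.

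Your identity $\gamma(Q^1_t-Q^1_0)=-\int_0^t\nabla_{q^1}V(Q_s)\,ds+\sqrt{2\gamma\beta^{-1}}\,W^1_t-(P^1_t-P^1_0)$ is exact for every $\alpha\ge 0$, because $\alpha Q^1$ is just part of the globally Lipschitz drift. So there is no case split and no kernel comparison: part~\ref{item:LightSlow} reduces entirely to the moment bound on $P^1_t$ from part~\ref{item:Light-P1}. For $\alpha=0$ this is the paper's decomposition in disguise: the non-Lipschitz terms in its expression for $\hat q_t-Q^1_t$, obtained from \eqref{eq:Light-Q1-explicit}, are exactly $\gamma^{-1}(P^1_t-P^1_0)$.

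Your route also shows that the diffusion-coefficient mismatch, which the paper identifies as the obstruction to a pathwise estimate, is carried entirely by $P^1_t/\gamma$. A maximal inequality for its fast OU part, of order $\eps\log(1+T/\eps)$ in $L^2$, would therefore give a pathwise version as well.

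Two bookkeeping points need fixing.

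First, in your route nothing is integrated in time. The forcing in the Gronwall inequality is $\sup_{s\le t}\E|P^1_s-P^1_0|^2$, so you get $Ce^{Ct^2}(\eps+\kappa|Q^1_0|^2+|P^1_0|^2)$, where $\kappa|Q^1_0|^2$ is whatever part~\ref{item:Light-P1} provides. With $\kappa=\eps$, as you write and as in \eqref{eq:Light-PathBound}, this does not imply the claimed $\eps^2t^2|Q^1_0|^2$; it already fails at $t=1$. Keep the sharp size instead. The $(2,1)$ entry $\alpha(e^{\lambda_+t}-e^{\lambda_-t})/(\lambda_+-\lambda_-)$ is itself $\bigO(\eps)$, since $\lambda_+-\lambda_-\sim\gamma/\eps$. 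It therefore contributes $\bigO(\eps^2)|Q^1_0|^2$ to $\E|P^1_t|^2$, not $\eps|Q^1_0|^2$.

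Second, absorbing $\eps\le\eps t^2$ and $\eps^2\le\eps^2t^2$ requires $t\ge1$, which is the standing convention of Section~\ref{sec:PhysicalLimits}. For $t<1$ the $\alpha>0$ line of \eqref{eq:SmallMass-Fast} cannot hold as written. With zero initial data, $P^1_t/\gamma$ alone has second moment of order $\eps$ once $t\gg\eps$ (e.g.\ at $t=\sqrt{\eps}$), whereas the stated bound is $\bigO(\eps^2)$ there. The paper's own estimate of the stochastic terms likewise yields an $\bigO(\eps)$ contribution, so its $\eps t^2$ relies on the same convention.

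With these adjustments your argument gives $Ce^{Ct^2}(\eps+\eps^2|Q^1_0|^2+|P^1_0|^2)$ for all $t\ge0$. This implies the stated bound for $t\ge1$.
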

    For proof see Appendix~\ref{app:PartCons-Limit-Proof}. Note that the zero-mass limit is the only constraining mechanism in this article  for which the quantitative estimate~\eqref{eq:SmallMass-Fast} is pointwise in time and not pathwise (i.e. missing $\sup_{t\in [0,T]}$ inside the expectation). This follows since in this scaling, the diffusion coefficient for the unconstrained pre-limit variables $(Q^1,Q^2,P^2)$ does not coincide with the diffusion coefficient for the limiting variables $(\hat q_t, q_t,p_t)$, and therefore a pathwise estimate in this case leads to exponential growth as $\eps\to 0$ following the techniques used throughout this article. Details can be found at the end of the proof for Proposition~\ref{prop:Light}. 

Following the proof of Corollary~\ref{cor:point-time-conv-prob} and Sec.~\ref{sssec:Spat-InitDatum}, we also have pointwise-in-time estimates, such as, for any $t>0$ and initial datum $P^1_0=0$
\begin{align*}
    P^1_t\to 0, \ (Q^1_t,Q^2_t,P^2_t) \to (\hat q_t,q_t,p_t) \ \ \text{in probability as \ } \eps\to 0,
\end{align*}
and, furthermore our results generalise to $\eps$-dependent, possibly exploding and random initial datum.

\subsubsection{Initial conditions}
    
Unlike other estimates on the slow variables (see for instance~\eqref{eq:Phase-fast}), the error estimate~\eqref{eq:SmallMass-Fast} for the unconstrained variables in this limit requires that the initial datum for the constrained momentum $P^{1}_0=0$ (or  converges to zero if it is $\eps$-dependent). This is in accordance with the usual small-mass, see e.g.~\cite[Proposition~2.14]{LelievreRoussetStoltz10} which is formulated for the dimensionless variable $v =\frac{P}{\eps}$. Therefore, constant initial condition $v_0$ refers to $P_0= \eps v_0$ in our case, which then yields the corresponding convergence.


\subsubsection{Steady state in the zero mass limit}

    In this case the pre-limit dynamics~\eqref{eq:abc-slow},\eqref{eq:intro-0Mass-Fast} admits the steady state
\begin{align*}
    d\mu^\eps(q,p) = \frac{1}{Z^\eps} \exp\left( - \beta\left(V(q) + \frac{1}{2}|p|^2 + \frac{1}{2\eps}|p^1|^2\right)\right)dqdp
\end{align*}
where $Z^\eps$ is the normalisation constant which ensures that $\mu^\eps\in \mathcal P(\R^{2d})$. 
Using $\eta:\R^d\to \R^k$ defined by $\eta(p^1,p^2)=p^1$, the probability measure $\mu^\eps\in \mathcal P(\R^{2d})$ converges weakly to $\mu_{3}\in \mathcal P(\R^d\times \eta^{-1}(0))$, given by 
\begin{align*}
    d\mu(q,p^2)\coloneqq \frac{1}{Z} \exp\left( - \beta\left(V(q) + \frac{1}{2}|p^2|^2\right)\right)dqdp^2
\end{align*}
where $Z$ is the normalisation constant. Note that $\mu$ is the steady state for the limit $(\hat q,q^2,p^2)$. 
In particular, for $f\in C_b(\R^d)$ with $f=f(q)$ we have
\begin{align*}
    \lim_{\eps\to 0} \int_{\R^{2d}}f(q)d\mu^\eps(q,p) = \int_{\R^{d}} f(q) \frac{1}{\bar Z} \exp(-\beta V(q))dq,
\end{align*}
with $\bar Z\coloneqq \int
\exp(-\beta V(q))dq$. 

\begin{rem}
    The paper \cite{LelievreRoussetStoltz12} discusses a Dirac bracket formulation of constrained Langevin dynamics for a nonlinear map $\xi$, for which a splitting method in combination with a Metropolisation step is introduced that samples the properly constrained steady state, which is different from the one obtained by us in the zero mass limit. This is, however, not a contradiction: By Proposition \ref{prop:Light}, the constrained position variable $Q^1_t$ follows an overdamped motion, and, formally, the Dirac formulation can be obtained if we suppose that $\nabla_{q^1}V(q)=0$ and send $\beta\to\infty$ in the equation for $Q^1_t$. The decoupling of the motion for $Q^1_t$ and for $Q^2_t$ in the zero mass limit is akin to temperature accelerated dynamics \cite{maragliano2006temperature} or the temperature-separated Langevin dynamics \cite{breiten2021stochastic}, even though the physical limit here is a different one. 
\end{rem}

\subsection{Infinite mass limit} \label{sec:highmass}

We now study the $\eps\to 0$ limit of the partial-infinite mass~\eqref{eq:abc-slow},\eqref{eq:intro-InfMass}, which corresponds to a Langevin dynamics with Hamiltonian of the form~\eqref{eq:Ham-InfMass}, which physically corresponds to the setting where the first $k$-particles with position and momentum $(Q^1,P^1)$ are $\mathcal O(\eps^{-1})$ times heavier than the remaining $d-k$ particles described by $(Q^2,P^2)$. 
Making the connection to~\eqref{eq:abc-slow}, here we study the limit of~\eqref{eq:abc-slow} with $a=\eps$, $b=0$ and $c=\eps$. In the following result, we discuss the limit $\eps\to 0$ in this setting. 

As expected $Q^1_t$ converges to the initial datum $Q^1_0$, but the limit dynamics for $P^1_t$ as $\eps\to 0$ is given by~\eqref{eq:InfMass-p1}. We point out the overall limiting dynamics $(Q^1_t=Q^1_0,p^1_t,q^2_t,p^2_t)$ is a Langevin dynamics~\eqref{eq:intro-genLang} with the \emph{impetus-striction Hamiltonian} $\mathcal H(q,p)=\lim_{\eps\to 0} H(q,p)$, where the pre-limit Hamiltonian $H$ is defined in~\eqref{eq:Ham-InfMass}. In particular, this Hamiltonian introduced in~\cite[Eq.~(19)]{WalterHartmannMaddocks11}, satisfies
\begin{equation*}
    \mathcal H(q,p)\coloneqq H(q,\mathcal{P}p) = V(q)+\frac12 |\mathcal Pp|^2,
\end{equation*}
where $\mathcal P=I_{d\times d} - \nabla\xi^\top(\nabla\xi \nabla\xi^\top)^{-1}\nabla\xi$ for general maps $\xi$ (see introduction). The definition $p_\xi=\mathcal{P}p$ is used in Vakonomic mechanics \cite[Ch.~1.6.4]{arnoldBook} and the impetus-striction formulation mechanics \cite{maddocks1995unconstrained} to define the constrained momenta.  
In the setting of this article $\xi(q)=q^1$ and hence $\mathcal Pp=p^2$. For detailed discussion of Langevin dynamics in impetus-striction form, see~\cite{WalterHartmannMaddocks11} and references therein.

\begin{prop}\label{prop:heavy}
    Given $\eps>0$, let $(Q_t,P_t)$ be the solution to~\eqref{eq:abc-slow},\eqref{eq:intro-InfMass} with corresponding initial data $(Q^i_0,P^i_0)$ for $i=1,2$. Recall the definition of the potential $V$ from~\eqref{eq:Valpha} for some parameter value $\alpha\geq 0$. Furthermore, let $T\geq 1$ be fixed. 
    \begin{enumerate}[label=(\roman*)]
        \item\label{item:heavy-Q1}
       We have for any$ t\geq 0$
       \begin{align}
           \E\Bigl[\bigl|Q_t^{1}-Q^{1}_0\bigr|^2\Bigr]
            \leq \begin{dcases}  \eps^2 C_1 t^2 \Bigl( t^2+|P_0^{1}|^2\Bigr) \ \  &\text{if $\alpha=0$}, \\
             \eps C_2 \Bigl( t+ \eps t^4 |Q^1_0| +  |P^1_0|^2  \Bigr) &\text{if $\alpha>0$},
            \end{dcases}
        \end{align}
     where the constants $C_1,C_2>0$, are independent of $\eps$ and $t$.   
        \item\label{item:heavySlow} Let $(q_t,p_t)\in \R^{2(d-k)}$ with initial data $(q_0,p_0)=(Q_0^2,P_0^2)$ evolve according to~\eqref{eq:limit-gen} with $\hat q=Q^1_0 \in \R^k$. 
         Then
        \begin{align}\label{eq:heavySlow} &\E\biggl[\sup_{t\in [0,T]}\biggl|\begin{pmatrix}q_t - Q^{2}_t \\ p_t - P^{2}_t\end{pmatrix}\biggr|^2\biggr] \leq \begin{dcases}
         \eps^2 C_4 T^4    e^{C_5 T}\Bigl(T^2+|P_0^{1}|^2  \Bigr) & \text{if } \alpha=0\\
            \eps C_6  T  \left(T+ \eps T^4 |Q^1_0|^2 +  |P^1_0|^2 \right) & \text{if } \alpha>0
            \end{dcases}
        \end{align}
        where the constants  $C_4, C_5, C_6>0$ are independent of $\eps$ and $T$. 
        \item \label{item:heavyP1} Let $p^1_t\in\R^k$ with initial condition $p^1_0=P^1_0$ evolve according to~\eqref{eq:InfMass-p1}. We have  for any $t\geq 1$
        \begin{align}
        \E\biggl[\bigl|p^{1}_t - P_t^{1}\bigr|^2\biggr] \leq  \begin{dcases}
\eps C_7  t^2 \left( e^{C_8 t} \left(t^2+\eps t^2 |P^1_0|^2  \right) + |P^1_0|^2 \ \right) & \text{ if } \alpha=0 \\
C_9 \left( (t^2 + \eps^2  t^7)  |Q_0^1|^2 + \eps^2 t^4  |P^1_0|^2 +   \eps t^4\right) &\text{ if } \alpha>0,
        \end{dcases}
        \end{align}
        where $C_7,C_8,C_9>0$ are independent of $\eps$ and $t$.
    \end{enumerate}
\end{prop}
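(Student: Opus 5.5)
We would follow the general strategy of Section~\ref{sec:Strategy-constrained}: treat each coordinate pair $((Q^1_t)^i,(P^1_t)^i)$ of \eqref{eq:intro-InfMass} via the variation-of-constants formula \eqref{eq:intro-VarCons}, now with $a=c=\eps$, $b=0$. The drift matrix is
\[
A=\begin{pmatrix}0&\eps\\-\alpha&-\gamma\eps\end{pmatrix},
\]
so the forcing $\nabla_{i}U(Q_s)$ enters through the second column of $e^{A(t-s)}$. For $\alpha=0$ the exponential is explicit: $e^{At}$ has entries $1$, $(1-e^{-\gamma\eps t})/\gamma$, $0$ and $e^{-\gamma\eps t}$, and the second column is $O(\eps t)$ in the $Q$-entry and $O(1)$ in the $P$-entry. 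For $\alpha>0$ the eigenvalues are $\lambda_\pm=\tfrac12(-\gamma\eps\pm\sqrt{\gamma^2\eps^2-4\alpha\eps})$, which are complex of modulus $\sqrt{\alpha\eps}$ for small $\eps$. The slow oscillation has frequency $\sqrt{\alpha\eps}$ and weak damping $\gamma\eps/2$, so $|(e^{At})_{12}|\lesssim \min(\eps t,\sqrt{\eps/\alpha})$ and $|(e^{At})_{11}-1|\lesssim \alpha\eps t^2$.

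\textbf{Part (i).} Write
\[
Q^1_t-Q^1_0=\bigl((e^{At})_{11}-1\bigr)Q^1_0+(e^{At})_{12}P^1_0+\int_0^t (e^{A(t-s)})_{12}\nabla U(Q_s)\,ds+\int_0^t(e^{A(t-s)})_{12}\sqrt{2\gamma\beta^{-1}}\,dB_s .
\]
For $\alpha=0$, Assumption~\ref{item:assVbound} bounds the drift integral by $C_U\eps t^2$, the Itô isometry gives $\eps^2 t^3\lesssim\eps^2t^4$ for the noise term (using $t\ge1$ or absorbing constants), and the $P^1_0$ term gives $\eps^2t^2|P^1_0|^2$. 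For $\alpha>0$, the bound $|(e^{At})_{12}|\le C\sqrt{\eps}$ uniformly turns the noise term into $\eps t$ and the $P^1_0$ term into $\eps|P^1_0|^2$. Note that with $\alpha>0$ the force $\nabla_{q^1}V=\alpha Q^1+\nabla_{q^1}U$; only the $U$-part is treated as forcing, which is bounded by $C_U$, and its integral is at most $C\eps t^2\cdot$const, giving a contribution $\lesssim\eps^2t^4$, which I expect appears as the $\eps t^4$-type term. The $Q^1_0$ term produces the $\eps^2t^4|Q^1_0|^2$ contribution.

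\textbf{Part (ii).} Take the difference $e_t=(q_t-Q^2_t,p_t-P^2_t)$. The noises cancel, and $e$ solves a linear ODE with matrix $\begin{pmatrix}0&1\\-\alpha&-\gamma\end{pmatrix}$ forced by $\nabla_{q^2}U(Q^1_0,q_t)-\nabla_{q^2}U(Q^1_t,Q^2_t)$. By Assumption~\ref{item:assVLip} this forcing is bounded by $L_U(|Q^1_t-Q^1_0|+|e_t|)$. Here I would invoke the Gronwall-type Theorem~\ref{cor:gronwall} already used for Propositions~\ref{prop:Behav-Q2P2} and~\ref{prop:PS-Cons}. It yields
\[
\E\sup_{[0,T]}|e|^2\lesssim e^{CT}\,T\int_0^T\E|Q^1_s-Q^1_0|^2\,ds\quad\text{for }\alpha=0,
\]
and a polynomial-in-$T$ analogue (without the exponential) for $\alpha>0$. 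Inserting (i) and integrating in $s$ gives \eqref{eq:heavySlow}.

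\textbf{Part (iii).} Take the difference of the $P^1$ equation and \eqref{eq:InfMass-p1}:
\[
d(p^1-P^1)=\bigl[\nabla_{q^1}V(Q^1_t,Q^2_t)-\nabla_{q^1}V(Q^1_0,Q^2_t)\bigr]dt+\gamma\eps P^1_t\,dt,
\]
again with the noise cancelling. Note that \eqref{eq:InfMass-p1} uses the pre-limit $Q^2_t$. The bracket is bounded by $(\alpha+L_U)|Q^1_t-Q^1_0|$. By Cauchy--Schwarz in time,
\[
\E|p^1_t-P^1_t|^2\lesssim t\int_0^t\bigl(\E|Q^1_s-Q^1_0|^2+\eps^2\E|P^1_s|^2\bigr)ds .
\]
This requires a second-moment bound on $P^1_s$, which can be read off the same variation-of-constants representation: $\E|P^1_s|^2\lesssim|P^1_0|^2+s+s^2$ when $\alpha=0$, with $\alpha|Q^1_0|^2/\eps$-type terms appearing when $\alpha>0$. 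The latter produces the $t^2|Q^1_0|^2$ term. Combining these with (i) gives the stated bounds.

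\textbf{Main obstacle.} The hardest step is the careful entrywise estimation of $e^{At}$ for $\alpha>0$. The eigenvalues coalesce in the limit, so one needs bounds uniform in $\eps$ that simultaneously capture the $\sqrt{\eps}$ amplitude and the $\eps t$ short-time behaviour. I would first handle this by writing $e^{At}$ through $\cos(\omega t)$ and $\sin(\omega t)/\omega$ with $\omega=\sqrt{\alpha\eps-\gamma^2\eps^2/4}$.
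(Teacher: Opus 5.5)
Your parts (i) and (ii) follow the paper's proof. You use explicit variation-of-constants formulas for each pair $((Q^1_t)^i,(P^1_t)^i)$ (the underdamped formula of Corollary~\ref{cor:sol_underdamped} when $\alpha>0$), then $|\nabla_{q^1}U|\le C_U$ together with It\^o isometry, and then Theorem~\ref{cor:gronwall} with $\hat q=Q^1_0$.

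Part (iii) takes a genuinely different route, and it is correct.

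\textbf{What the paper does.} It takes the limit with $q_s$ in \eqref{eq:impetusp1} and writes both $P^1_t$ and $p^1_t$ through their integral representations. It then estimates kernel differences such as $1-e^{-\eps\gamma(t-s)}$ and $\cos(\cdot)-1-\tfrac{c}{|\theta|}\sin(\cdot)$, in both the drift and a stochastic integral, and inserts (i)--(ii).

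\textbf{What you do.} You subtract the two SDEs. The noise then cancels identically, and the error is driven only by $\alpha(Q^1_s-Q^1_0)$, the Lipschitz difference of $\nabla_{q^1}U$, and the damping $\gamma\eps P^1_s$. You therefore need only (i) and a crude bound on $\E|P^1_s|^2$.

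\textbf{What your route buys.} It is shorter, and it is also sharper.
\begin{itemize}
\item For $\alpha=0$ and $t\ge1$ you get $\lesssim\eps^2t^4(t^2+|P^1_0|^2)$ with no $e^{C_8t}$, because \eqref{eq:InfMass-p1} is driven by $Q^2_t$. If you use $q_t$ instead, as the paper's proof does, you must add $L_U^2t\int_0^t\E|q_s-Q^2_s|^2ds$ and invoke (ii); that is where the exponential comes from.
\item For $\alpha>0$, the $\alpha|Q^1_0|^2/\eps$ part of $\E|P^1_s|^2$ is multiplied by $\eps^2$. It therefore contributes $\alpha\eps t^2|Q^1_0|^2$, not $t^2|Q^1_0|^2$ as you wrote.
\item Combined with $|(e^{As})_{11}-1|\lesssim\alpha\eps s^2$, all your $|Q^1_0|^2$-terms are $O(\eps t^2+\eps^2t^6)$. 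So your argument gives $P^1_t\to p^1_t$ for every $Q^1_0$. The paper instead bounds the coefficient $\tfrac{2\alpha}{|\theta|}\sin(t|\theta|/2)$ by $\alpha t$ on its own, keeps $t^2|Q^1_0|^2$ without a factor $\eps$, and only concludes convergence when $Q^1_0=0$ or $\alpha=0$.
\end{itemize}

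\textbf{Bookkeeping.} To reach the stated $\eps^2t^4|P^1_0|^2$ in (iii) for $\alpha>0$, feed the $\eps s$ branch of $|(e^{As})_{12}|\lesssim\min(\eps s,\sqrt{\eps/\alpha})$ into the time integral. The $\eps|P^1_0|^2$ form of (i) would only give $\eps t^2|P^1_0|^2$. Likewise, bounding the $U$-drift contribution to $\E|Q^1_s-Q^1_0|^2$ by $\min(\eps^2s^4,\eps s^2)$ keeps it inside $\eps t^4$ for all $t$.

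\textbf{The extra term in (i).} The $\eps^2t^4$ drift term you find in (i) for $\alpha>0$ is genuine. Take $Q^1_0=P^1_0=0$ and constant $\nabla_{q^1}U=C_U$. Then $\E[Q^1_t]\approx-C_U\eps t^2/2$ for $t\ll\eps^{-1/2}$, which exceeds $C\eps t$ once $t\gg\eps^{-1/3}$. It is absent from the displayed bound only because of the convention, stated at the start of Section~\ref{sec:PhysicalLimits}, of displaying the lowest order in $\eps$. The paper's proof records this term as $\tfrac{a^2C_U^2}{|\theta|^2}t$, but squaring the time integral of a kernel of size $\sqrt{\eps}$ gives $\eps t^2$ (or $\eps^2t^4$ via $|\sin x|\le|x|$), not $\eps t$.
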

\begin{proof}
    The proof can be found in Appendix~\ref{proof:impetus}.
\end{proof}
Finally, as in Corollary~\ref{cor:point-time-conv-prob} and Sec.~\ref{sssec:Spat-InitDatum}, we also have pointwise-in-time estimates, for instance, for any $t>0$ and initial datum 
\begin{align*}
   Q^1_t\to 0, \ (Q^2_t,P^2_t) \to (q_t,p_t) \ \ \text{in probability as \ } \eps\to 0.
\end{align*}
If $Q^1_0=0$ or $\alpha =0$ we also have 
\begin{align*}
    P^1_t \to p^1_t,  \ \ \text{in probability as \ } \eps\to 0,
\end{align*}
and, furthermore our results generalise to $\eps$-dependent, possibly exploding and random initial datum. 

\subsubsection{Steady state in the infinite mass limit}\label{ssec:Infmass-Steady}

We briefly discuss the steady state of the infinite mass limit. Recalling~\eqref{eq:Ham-InfMass}, it follows that the pre-limit dynamics~\eqref{eq:abc-slow},\eqref{eq:intro-InfMass} for $\eps>0$ admits the steady state
\begin{align*}
     d\mu^\eps(q,p)= \frac{1}{Z^\eps} \exp\left( - \beta\left(V(q) +\frac{\eps}{2}|p^1|^2+\frac{1}{2}|p^2|^2\right)\right)dqdp
\end{align*}
where $Z^\eps$ is the normalisation constant which ensures that $\mu^\eps\in \mathcal P(\R^{2d})$. The same goes for the unnormalised Boltzmann-Gibbs density $\exp(-\beta\mathcal{H})$ that is a stationary solution of the Fokker-Planck equation associated with \eqref{eq:limit-gen}, for any constant $\hat{q}=q^1$. 

Nevertheless, the limit measure as $\eps\to 0$ cannot be a probability measure on the Borel $\sigma$-algebra $\mathcal{B}(\R^d\times \R^d)$ as the density becomes constant in $p^1$, and the Lebesgue measure in $p^1$ is not finite. This property of the invariant measure can be related to the lack of coercivity of the $P^1$-dynamics, since the right-hand side of \eqref{eq:InfMass-p1} is independent of $P^1$. On the other hand, when integrated against any bounded continuous function $f$ of $q$ and $p^2$, the probability measure $\mu^\eps$  has the property 
\[
\int_{\R^d\times\R^d} f(q,p^2) d\mu^\eps(q,p) = \underbrace{\left(\frac{\beta \eps}{2\pi}\right)^{k/2}\int_{R^k}\exp\left( - \beta\frac{\eps}{2}|p^1|^2\right)dp^1}_{=1}\int_{\R^d\times\R^{d-k}}f(q,p^2)\,d\mu(q,p^2) 
\]
where 
\begin{align*}
    d\mu(q,p^2) = \frac{1}{\tilde Z} \exp\left( - \beta\left(V(q)+\frac{1}{2}|p^2|^2\right)\right) dq dp^2.
\end{align*}
with normalisation constant $\tilde Z=\int
\exp(-\beta (V+\frac{1}{2}|p^2|^2))dqdp^2$. As a consequence, the $(q,p^2)$-marginal of $\mu^\eps$ weakly converges to the probability measure $\mu$ on $\mathcal{B}(\R^d\times \R^{d-k})$. 
This is in line with existing results for impetus-striction formulations of the Langevin equation \cite[Sec.~5]{WalterHartmannMaddocks11}. 

Note, however, that the limit \emph{dynamics} cannot sample from the limiting marginal probability measure $\mu$, since $Q^1$ becomes a conserved quantity under the dynamics in the limit $\eps\to 0$. Therefore, setting $Q^1_0=w$, a candidate for the marginal probability measure of the limit dynamics \eqref{eq:abc-slow},\eqref{eq:intro-InfMass} in $(q,p^2)$ is 
\[
    d\nu_{w}(q,p^2) = \frac{1}{\bar{Z}(w)} \exp\left( - \beta \left(V(q)+\frac{1}{2}|p^2|^2\right)\right)\delta_{w}(dq^1)dq^2dp^2.
\]
where $\bar{Z}(w)=\int
\exp(-\beta (V(w,q^2)+\frac{1}{2}|p^2|^2))dq^2dp^2$ denotes the normalisation constant. (Note that there is a difference between the invariant measure of the limiting equation and the limit of the invariant measure, a difference that is related to a non-commutativity of the limits $\eps\to 0$ and $t\to\infty$.)

\subsection{High friction limit}\label{sec:highfric}
The following result states the $\eps\to 0$ limit of the infinite-friction setting with fluctuation dissipation~\eqref{eq:abc-slow},\eqref{eq:intro-InfFriction-withFD-Fast}; the case~\eqref{eq:abc-slow},\eqref{eq:intro-InfFriction-NoFD-fast} without fluctuation-dissipation is discussed in Remark~\ref{rem:InfFric-withouFD} below. Physically speaking, assuming that friction and noise coefficient are balanced by the fluctuation dissipation relation, we are increasing the drag in the system and, proportionally, the thermal fluctuation via the Brownian motion.  Intuitively, one would expect that in this setting the motion of the particle is damped out, and thus the noisy particle position stays, on average, constant. We also expect that the constrained momentum $P^1_t$ converges to a zero-mean Gaussian $\mathcal N(0,\beta^{-1})$ as $\eps \to 0$, which follows from the Fokker-Planck equation corresponding to~\eqref{eq:intro-InfFriction-withFD-Fast}, formally letting $\eps\to 0$. This behaviour is captured by our variance estimates below.

Making the connection to \eqref{eq:abc-fast}, here we study the limit of~\eqref{eq:abc-fast} with $a=1, b =\alpha\geq 0$, with the potential~\eqref{eq:Valpha}, and $c\to \infty$ with the noise term scaled accordingly. 
\begin{prop}\label{prop:PartialMom}
     Given $\eps>0$, let $(Q_t,P_t)$ be the solution to~\eqref{eq:abc-slow},\eqref{eq:intro-InfFriction-withFD-Fast} with corresponding initial data $(Q^i_0,P^i_0)$ for $i=1,2$. Recall the definition of the potential $V$ from~\eqref{eq:Valpha} for any  parameter value $\alpha\geq 0$. Further let $T\geq 1$ be fixed.
    \begin{enumerate}[label=(\roman*)]
        \item\label{item:PartialMom-P1}
        For any $t\geq 0$ we have
        \begin{align}
        \E\Bigl[|Q_t^{1} - Q_0^{1}|^2 \Bigr] &\leq 
        \begin{dcases}
        C_1 \Bigl(  \eps t+ \eps^2 |P_0^{1}|^2   \Bigr) \ \ &\text{if $\alpha=0$,}\\
         C_2 \Bigl( \eps t+ \eps^2 \Bigl(   (1+t^2) |Q_0^{1}|^2 + |P_0^{1}|^2  \Bigr) \Bigr) \ \ &\text{if $\alpha>0$,}
        \end{dcases}
        \label{eq:InfFric-FD-Q1}\\
        \E \Bigl[  \bigl| P^1_t \bigr|^2 \Bigr] & \leq 
        \begin{dcases}
        C_3 \Bigl( \eps^2+  e^{-\frac{C_4}{\eps}t}  |P^1_0|^2   +\beta^{-1} \Bigr) \ \ &\text{if $\alpha=0$,}\\
        C_5 \Bigl(\eps^2+ \eps^2 \left|Q^1_0\right|^2 +  \Bigl(e^{-\frac{C_6}{\eps}t} + \eps^4\Bigr) |P^1_0|^2   +\beta^{-1} \Bigr) &\text{if $\alpha>0$,}
        \end{dcases}
        \label{eq:InfFric-FD-P1}
        \end{align}
    where the constants $C_i>0$ for $i\in\{1,\ldots 6\}$ are independent of $\eps$ and $t$.
        \item\label{item:PartialMomSlow} 
    Let $(q_t,p_t)\in \R^{2(d-k)}$ with initial data $(q_0,p_0)=(Q_0^2,P_0^2)$ evolve according to~\eqref{eq:limit-gen} with $\hat q=Q^1_0 \in \R^k$. We have 
    \begin{align}\label{eq:PartialMom-Slow}
        \E\biggl[\sup_{t\in [0,T]}\biggl|\begin{pmatrix}q_t - Q^{2}_t \\ p_t - P^{2}_t\end{pmatrix}\biggr|^2\biggr] \leq 
        \begin{dcases}
             \eps C_7 e^{C_8 T} T^2 \Bigl( T+\eps |P_0^{1}|^2\Bigr) \ \ &\text{if $\alpha=0$}, \\
             \eps C_9 T  \Bigl(T+\eps  \left|Q^1_0\right|^2  + \eps |P_0^{1}|^2  \Bigr)   &\text{if $\alpha>0$},
        \end{dcases} 
    \end{align}
        where $C_7,C_8,C_9>0$ are independent of $\eps$ and $T$.
    \end{enumerate}
\end{prop}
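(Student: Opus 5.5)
The plan follows the strategy of Section~2.2.1: work coordinatewise on each pair $X=((Q^1)^i,(P^1)^i)$ and use the variation-of-constants formula \eqref{eq:intro-VarCons}. Here the drift matrix is
\[
A=\begin{pmatrix}0&1\\-\alpha&-\gamma/\eps\end{pmatrix},
\]
and the noise vector is $C=(0,\sqrt{2\gamma\beta^{-1}/\eps})^\top$. The forcing is $(0,-\nabla_i U(Q_s))^\top$, which is bounded by $C_U$ thanks to Assumption~\ref{item:assVbound}.

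\textbf{Step 1: the matrix exponential.} The eigenvalues of $A$ are $\lambda_\pm=\tfrac12\bigl(-\gamma/\eps\pm\sqrt{\gamma^2/\eps^2-4\alpha}\bigr)$. For small $\eps$ these satisfy $\lambda_-\approx-\gamma/\eps$ and $\lambda_+\approx-\alpha\eps/\gamma$; when $\alpha=0$ one has $\lambda_+=0$ exactly. I will compute the entries of $e^{At}$ explicitly. For $\alpha=0$:
\[
e^{At}=\begin{pmatrix}1&\tfrac{\eps}{\gamma}(1-e^{-\gamma t/\eps})\\0&e^{-\gamma t/\eps}\end{pmatrix}.
\]
For $\alpha>0$ the entries are analogous combinations of $e^{\lambda_\pm t}$. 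The key bounds are:
\begin{itemize}
\item $|(e^{At})_{11}-1|\lesssim \alpha\eps t$, using $1-e^{\lambda_+t}\le|\lambda_+|t$;
\item $|(e^{At})_{12}|\lesssim\eps$;
\item $|(e^{At})_{21}|\lesssim\alpha\eps$;
\item $|(e^{At})_{22}|\lesssim e^{-C t/\eps}+\eps^2$ (approximately).
\end{itemize}

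\textbf{Step 2: the three terms for $Q^1_t-Q^1_0$.} The deterministic term $(e^{At}X_0)_1-Q^1_0$ contributes $\eps^2(1+t^2)|Q^1_0|^2+\eps^2|P^1_0|^2$. The drift integral is bounded by $C_U\int_0^t|(e^{A(t-s)})_{12}|ds\lesssim\eps t$, and I will also try to get $\sqrt{\eps t}$ via Cauchy--Schwarz. The stochastic integral is handled by the Itô isometry:
\[
\frac{2\gamma\beta^{-1}}{\eps}\int_0^t|(e^{As})_{12}|^2ds\lesssim \frac1\eps\cdot\eps^2 t=\eps t.
\]
Summing over coordinates gives \eqref{eq:InfFric-FD-Q1}. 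For $P^1_t$ the analogous split gives:
\begin{itemize}
\item the initial-data terms $e^{-Ct/\eps}|P^1_0|^2$ and $\eps^2|Q^1_0|^2$;
\item a drift contribution $\int_0^t|(e^{As})_{22}|ds\lesssim\eps$, hence $\eps^2$ after squaring;
\item a noise contribution $\frac{1}{\eps}\int_0^t e^{-2\gamma s/\eps}ds\lesssim\beta^{-1}$, which is $O(1)$ and does not vanish.
\end{itemize}
This yields \eqref{eq:InfFric-FD-P1}.

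\textbf{Step 3: the unconstrained variables.} The difference $Y_t=(q_t-Q^2_t,p_t-P^2_t)$ has no noise, since the Brownian motions $W^2$ cancel. It therefore solves $Y_t=\int_0^t e^{A_2(t-s)}(0,\nabla_{q^2}U(Q^1_s,Q^2_s)-\nabla_{q^2}U(Q^1_0,q_s))^\top ds$, with $A_2=\begin{pmatrix}0&1\\-\alpha&-\gamma\end{pmatrix}$. By the Lipschitz bound \ref{item:assVLip}, the integrand is at most $L_U(|Q^1_s-Q^1_0|+|Y_s|)$. I will take the supremum over $[0,T]$, apply Cauchy--Schwarz, and then use Gronwall's inequality, i.e.\ invoke Theorem~\ref{cor:gronwall}. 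This gives
\[
\E\sup_{t\le T}|Y_t|^2\lesssim G(T)\,T\int_0^T\E|Q^1_s-Q^1_0|^2ds.
\]
Here $G(T)=e^{CT}$ when $\alpha=0$, while for $\alpha>0$ the decay of $e^{A_2t}$ removes the exponential factor. Inserting Step 2 gives
\[
\int_0^T\E|Q^1_s-Q^1_0|^2\,ds\lesssim\eps T^2+\eps^2 T|P^1_0|^2\ (+\eps^2T^3|Q^1_0|^2),
\]
which yields \eqref{eq:PartialMom-Slow}.

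\textbf{Main obstacle.} The hardest part is the precise $\alpha>0$ bookkeeping in Step 1. Near-degeneracy of the eigenvalue $\lambda_+\approx -\alpha\eps/\gamma$ must be controlled uniformly in $\eps$, so that the difference $e^{\lambda_+t}-e^{\lambda_-t}$ divided by $\lambda_+-\lambda_-$ is bounded by $\eps$ without $1/\alpha$ blow-ups. I would first try Taylor-type estimates $|e^{\lambda_+ t}-1|\le|\lambda_+|t$ combined with $\lambda_+-\lambda_-\ge\gamma/(2\eps)$ for $\eps$ small.
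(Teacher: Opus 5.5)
Your plan is essentially the paper's proof: coordinatewise variation of constants with $a=1$, $b=\alpha$, $c=\gamma/\eps$, $\tilde c=\sqrt{2\gamma\beta^{-1}/\eps}$, then Young's inequality, the It\^o isometry and $|\nabla_{q^1}U|\le C_U$ for $(Q^1,P^1)$, and finally Theorem~\ref{cor:gronwall} for $(Q^2,P^2)$; your eigenvalue bounds $|e^{\lambda_+t}-1|\le|\lambda_+|t$ and $\lambda_+-\lambda_-\ge\gamma/(2\eps)$ are a tidier substitute for the paper's Taylor expansions of $c/\theta$ and $c-\theta$.

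Two caveats. First, Cauchy--Schwarz cannot bring the drift contribution down to $\sqrt{\eps t}$: $\bigl(\int_0^t|f|\,ds\bigr)^2\le t\int_0^t|f|^2\,ds$ still gives $\eps^2t^2$, and this term is genuinely present (e.g.\ when $\nabla_{q^1}U$ is nearly constant along the path), so, exactly as in the paper's proof, you get $\eps t+\eps^2t^2$ and the displayed bound only follows under the paper's convention of keeping the lowest order in $\eps$ (or for $\eps t\lesssim 1$). Second, for $\alpha>0$ use the weighted form $\tilde C\eta^{-1}\int_0^T\E\bigl[|Q^1_s-Q^1_0|^2\bigr]\,ds$ of Theorem~\ref{cor:gronwall}, without the extra factor $T$ in your display, since otherwise you obtain $\eps T^3$ instead of $\eps T^2$.
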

\begin{proof}
    See Appendix~\ref{proof:inf-fric}. 
\end{proof}
The results show that the constrained variable $Q^1_t$ becomes constant in the limit $\eps\to 0$, i.e.\ $Q^1_t=Q^1_0$ for any $t\geq 0$, which is in accordance with Kozlov's approach to the realisation of constraints by high friction \cite{kozlov1990realization}. Note that this is different from the strong confinement setting of the previous section where $Q^1_t$ converges to the constrained manifold $Q^1=0$.

Following the proofs of Corollary~\ref{cor:point-time-conv-prob} and Sec.~\ref{sssec:Spat-InitDatum}, we also have pointwise-in-time estimates, for instance, for any $t>0$ and any initial datum 
\begin{align*}
    Q^1_t \to Q^1_0,  \ \ (Q^2_t,P^2_t) \to (q_t,p_t) \ \ \text{in probability as \ } \eps\to 0,
\end{align*}
and, furthermore our results generalise to $\eps$-dependent, possibly exploding and random initial datum.

\subsubsection{Steady state in the high friction limit} 

If the fluctuation-dissipation relation holds, then the dynamics~\eqref{eq:abc-slow},\eqref{eq:intro-InfFriction-withFD-Fast} admits a unique invariant measure 
\begin{align*}
    d\mu(q,p) = \frac{1}{Z} \exp\left( - \beta\left(V(q) + \frac{1}{2}|p|^2 \right)\right)dqdp
\end{align*}
that is independent of $\eps$. As a consequence, the limit of $\mu$ as $\eps\to 0$ does not coincide with the invariant measure of the limit dynamics, which is not surprising as the convergence result is on finite time only, so it does not imply any assertion whatsoever about the long term behaviour. Assuming that $Q^1_0=w$, then, by construction, the finite-time limit dynamics for $(Q^2,P^2)$ has the steady state
\begin{align*}
    d\nu_{w}(q^2,p^2)\coloneqq \frac{1}{\bar{Z}(w)} \exp\left( - \beta\left(V(w,q^2) +\frac{1}{2}|p^2|^2\right)\right)dq^2dp^2
\end{align*}
that agrees with the marginal probability measure of the infinite mass case.

\begin{rem}\label{rem:InfFric-withouFD}
    Recall the constrained dynamics~\eqref{eq:intro-InfFriction-NoFD-fast} under the `no fluctuation-dissipation relation' setting. In this case, the scaling behaviour of the constrained-unconstrained variables is as in Proposition~\ref{prop:PartialMom} -- see the end of Appendix~\ref{proof:inf-fric} for the detailed estimate. The key difference to the fluctuation-dissipation relation case discussed earlier is that now the noise in the $P^1$ variable does not depend on $\eps$ and therefore $P^1_t\to 0$ as $\eps\to 0$ in $L^2$. 
    (Recall that with fluctuation-dissipation relation, $P^1_t$ becomes a centred Gaussian with non-zero variance $\beta^{-1}I_{(d-k)\times(d-k)}$ as $\eps\to 0$ that is independent of $(Q^2,P^2)$, so the joint distribution in this case will have a Gaussian marginal in $ p^1$, rather than a Dirac.) 
    
    We expect that the high friction and the large mass limit will show similar behaviour in terms of numerical stability, which would be consistent with fact that both high friction and large mass limits of deterministic mechanical systems belong to the Vakonomic family \cite[Ch.~1.6.4]{arnoldBook} 
\end{rem}

\section{Discussions}\label{sec:discussion}

In this section we comment on several issues pertaining to the constraining of the Langevin dynamics which have not been covered so far in the article. First, we phrase infinite confinement via stiff potentials (recall Section~\ref{sec:InfConf}) into a general form which allows us to discuss related issues and generalisations. We start with explaining the particular form of \emph{constraint-geometry} used in this article, followed by a brief discussion regarding connections to~\cite{Katzenberger91} which contains fairly general qualitative results on stiff--non-stiff stochastic dynamics, and finally discuss the role of mass when dealing phase-space confinement (this has been ignored so far as we assume that particles have unit mass in the Langevin dynamics). 

So far in this article, in particular in Section~\ref{sec:InfConf}, we have restricted our analysis to the coordinate-projection case, i.e.\ $\xi(Q)=Q^1$. We discuss generalisations of our results to affine maps $\xi$. Finally, we reflect on nonlinear $\xi$ and outline the difficulties arising when dealing this setting. 

\subsection{General form of infinite confinement via stiff potentials.} \label{sec:Disc-GenForm}
In the introduction we introduced the general form~\eqref{eq:intro-genLang} of the Langevin dynamics and discussed the two Hamiltonians~\eqref{eq:stuff-Ham-spat} and~\eqref{eq:stuff-Ham-phase} which correspond to constraining via stiff potentials. In fact, the Langevin dynamics with these Hamiltonian are the special cases of the following general Langevin equation (in a compact notation)
\begin{equation}\label{eq:Lang-SC}
    dX_t = (J-A)\nabla H(X_t) dt +\frac{1}{2\eps} K \nabla|\Xi(X_t)|^2dt+ \sqrt{2\beta^{-1}A}\, dW_t,
\end{equation}
where we use $X_t=(Q_t,P_t)\in \R^{2d}$, and the \emph{constraint-geometry} matrix $K\in \R^{2d \times 2d}$. Furthermore, $\Xi:\R^{2d}\to\R^{2k}$ is the confinement potential. 

Note that in the stiff-potential confinement discussed in Section~\ref{sec:intro-StiffConf} and Section~\ref{sec:InfConf} can be written in the form~\eqref{eq:Lang-SC} with the choice $K=J-A$ where
$J\in\R^{2d\times 2d}$ is the canonical skew-symmetric matrix and $A\in\R^{2d\times 2d}$ is the diffusion matrix given explicitly by
\begin{equation*}
J\coloneqq\begin{pmatrix}
    0 & I_{d\times d} \\ -I_{d\times d} & 0 
\end{pmatrix}, \ \ 
A\coloneqq \begin{pmatrix}
    0 & 0 \\ 0 & \gamma I_{d\times d}  
\end{pmatrix}.   
\end{equation*}
Furthermore, $\Xi:\R^{2d}\to \R^{2k}$ encodes the stiff part of the Hamiltonian explicitly given by two choices $\Xi_1$ for spatial-confinement and $\Xi_2$ for phase-space confinement
\begin{equation}\label{eq:genXi}
    \Xi_1(Q) = \begin{pmatrix}
        \xi(Q) \\ 0
    \end{pmatrix}, \ \ \Xi_2(Q,P)=\begin{pmatrix}
        \xi(Q) \\ \nabla\xi^\top(Q)P.
    \end{pmatrix}
\end{equation}
We recall that the spatial constraint is enforced via the given map $\xi:\R^d\to\R^k$.  

\noindent\textbf{Role of constraint-geometry matrix.} The matrix $K$ in~\eqref{eq:Lang-SC} defines the geometry of descent to the zero level set of $\Xi$ as $\eps\to 0$. As pointed out above, in this paper (see Section~\ref{sec:intro-StiffConf}) we make the specific choice $K=A-J$. We now motivate this choice. 

In a recent work~\cite{HartmannNeureitherSharma25a} we study constraints (of stiff-confinement type) for OU processes of the form~\eqref{eq:Lang-SC}  by adding an extra drift term $K\nabla|\eta|^2$ where $\eta$ defines the constraint via $\eta = 0$. In~\cite[Sec.~4]{HartmannNeureitherSharma25a} we discuss the steady state of the strong confinement limit -- we show that if the unconstrained system (i.e.\ with $K=0$) admits a steady state $\mu$, then the steady state of the strong confinement limit crucially depends on the choice of $K$. In particular, with the choice $K=A-J$ the steady state of the limiting dynamics is $\mu$ constrained to the level set $\mu|_{\eta=0}$, while choosing $K=A$ leads to an entirely different steady state. 

Our choice of $K=A-J$ in this article is inspired by similar sampling questions for the underdamped Langevin dynamics, i.e.\ our goal is to sample the \emph{correct constrained steady state}, where `correct' should be understood in the sense that if $\mu$ is the steady state of~\eqref{eq:intro-genLang}, then the strong confinement limit admits the steady state $\mu|_{\Xi=0}$. Following our findings in~\cite{HartmannNeureitherSharma25a}, this suggests that we should consider constraint geometry which is consistent with the underlying (deterministic part of the) Langevin dynamics, i.e.\ $K=A-J$. This is further clarified by our discussion of steady states at the end of each of the sections above. 

We now briefly comment on how constraining via physical parameters discussed in Section~\ref{sec:intro-PhyLimits} and Section~\ref{sec:PhysicalLimits} fit into this general setting. 

The infinite-friction limit without fluctuation dissipation, i.e.\ $\eps\to 0$ limit of~\eqref{eq:abc-slow},\eqref{eq:intro-InfFriction-NoFD-fast} correspond to~\eqref{eq:Lang-SC} with the choices (recall $\xi$ is a coordinate projection in all the physical limits)
\begin{equation*}
    K=A, \ \Xi=\Xi_2(Q,P).
\end{equation*}
In particular, note that the constraint geometry in this case is not consistent with the underlying geometry of the Langevin dynamics. 
The infinite friction with fluctuation dissipation corresponds to~\eqref{eq:Lang-SC} but now also with the noise that scales like $\eps^{-\frac12}$ in the constrained variables. 

The zero mass limit, i.e.\ $\eps\to 0$ limit of~\eqref{eq:abc-slow},\eqref{eq:intro-0Mass-Fast} correspond to~\eqref{eq:Lang-SC} with the choices
\begin{equation}\label{eq:CG-0Mass}
    K=A-J, \ \ \Xi(Q,P)=\begin{pmatrix}
        0 \\ \nabla\xi^\top(Q)P.
    \end{pmatrix}
\end{equation}
Note the peculiar form of $\Xi$, which states that we only add confinement via the coupling $\nabla\xi^\top(Q)P$. However, in the case $\xi(Q)=Q^1$, we have $\nabla\xi^\top(Q)P=P^1$, i.e.\ we only confine the momentum while adding no explicit stiff potential in the spatial variables. 

Finally, we point out that the infinite mass (or impetus-striction) has no added stiff-confinement and corresponds to~\eqref{eq:Lang-SC} with $K=0$ and the Hamiltonian given by~\eqref{eq:Ham-InfMass}.

\noindent\textbf{Connections to~\cite{Katzenberger91}.} Katzenberger~\cite{Katzenberger91} studies the qualitative behaviour of general semimartingales with stiff drifts (characterised by the presence of $\eps>0$ as above). At first sight our setting of constraining via stiff-confinement (see Section~\ref{sec:intro-StiffConf}) seems to be a special case of~\cite[Sec.~8]{Katzenberger91}, but there are some crucial differences. First, the setting of spatial confinement characterised via $\Xi_1$ defined in~\eqref{eq:genXi} is not covered by~\cite{Katzenberger91} -- in particular the constraint does not have the required attracting eigenvalues to ensure convergence to the low-dimensional manifold (see~\cite[Sec.~3.3]{HartmannNeureitherSharma25a} for a detailed explanation for OU processes). Second, even though~\cite{Katzenberger91} covers a considerably larger class of problems, we take an entirely different approach (via variation of constants) which allows us to exploit the underlying semi-linear structure of the Langevin dynamics and extract precise error estimates. This enables us to prove stronger results compared to~\cite{Katzenberger91} for the unconstrained variables as \eqref{eq:conv-prob-phasespace} shows. In particular, the convergence in~\eqref{eq:conv-prob-phasespace} for the unconstrained variables is in $C([0,\infty],\R^{2(d-k)})$, i.e.\  it holds for infinite time horizon, as opposed to the result in~\cite[Sec.8]{Katzenberger91} which only holds up to a certain stopping time.

\noindent\textbf{Constraining momentum in presence of mass.} Note that throughout this article we have worked with unit-mass Langevin dynamics. However, mass of individual particles is often important in practical applications and one can incorporate mass into the Langevin dynamics~\eqref{eq:intro-genLang} by using the Hamiltonian $H(Q,P)=V(Q)+\frac12 p^\top M^{-1}P$, where $M$ is the mass matrix. Of particular interest is the question `How to constrain momentum in the presence of non-identity mass matrix?'. This is achieved by modifying $\Xi_2$ (recall~\eqref{eq:genXi}) used in phase-space confinement appropriately, which leads to two new choices
\begin{align*}
        \Xi_2(q,p) = \begin{pmatrix}
            \xi(q) \\ \nabla\xi^\top(q)M^{-1}p
        \end{pmatrix}, \ \ \Xi_2(q,p) = \begin{pmatrix}
            \xi(q) \\ \bigl(\nabla\xi^\top(q)M^{-1}\nabla\xi(q)\bigr)^{-1} \nabla\xi^\top(q)M^{-1}p
        \end{pmatrix} 
    \end{align*}
    Both these choices for phase-space coarse-graining maps appear in the context of hard-constraints, see~\cite[Sec.~2.2]{LelievreRoussetStoltz12} for physical interpretation and  properties of associated steady states.  

    In the specific setting of coordinate projection $\xi$, i.e.\ $\xi(Q)=Q^1$ we find $\nabla\xi^\top M^{-1}\nabla\xi=M_{11}^{-1}$ (where we use block notation $M=\left(\begin{smallmatrix}
        M_{11} & M_{12} \\ M_{21} & M_{22}\end{smallmatrix}\right)$\,) is the top-left block), which leads using inversion of block matrices to the following explicit forms of the two choices above respectively  
        {\small
        \begin{align*}
        \Xi_2(q,p) = \begin{pmatrix}
            \xi(q) \\ (M_{11}- M_{12}M_{22}^{-1}M_{21})^{-1}p^1 - (M_{11} - M_{12}M_{22}^{-1}M_{21})^{-1}M_{12}M_{22}^{-1}p^2
        \end{pmatrix}, \ \ \Xi_2(q,p) = \begin{pmatrix}
            \xi(q) \\ p^1 + M_{12}M_{22}^{-1}p^2
        \end{pmatrix}. 
    \end{align*}}
    In the case when $M_{12}=0$, which holds for instance when $M$ is a diagonal matrix and specifically $M=mI_{d\times d}$, the two choices become
    \begin{equation*}
        \Xi_2(q,p) = \begin{pmatrix}
            \xi(q) \\ M_{11}^{-1}p^1
        \end{pmatrix}, \ \ \Xi_2(q,p) = \begin{pmatrix}
            \xi(q) \\ p^1 
        \end{pmatrix}.
    \end{equation*}
    The analysis in this article straightforwardly generalises to this setting with $M_{12}=0$. The analysis of more complicated settings is outside the scope of this article.

\subsection[Generalisation to affine constraints]{Generalisation to affine constraints}\label{sec:affine}
While constraining via stiff potentials we restricted our analysis to 
coordinate-projection $\xi(Q)=Q^1$ (see start of Section~\ref{sec:limit-results}). We now briefly discuss how our results readily generalise to affine $\xi$, by looking at the particular setting of phase-space confinement discussed in Section~\ref{sec:phase-space}. 

To make things precise, consider the affine map
\begin{equation*}
    \xi(Q)=YQ + e, 
\end{equation*}
where $e\in\R^k$ is a given constant vector and $Y\in\R^{k\times d}$ satisfies $YY^\top>0$. The latter requirement ensures that $\xi$ is non-degenerate as $\nabla\xi^\top\nabla\xi=YY^\top>0$. Note that the  the pre-limit Langevin dynamics in the case of phase-space confinement can be derived using the general form~\eqref{eq:Lang-SC} with $K=J-A$ the mapping $\Xi_2$ in~\eqref{eq:genXi}. Furthermore, the constrained variables in this case are given by 
\[(\bar Q,\bar P)=\Xi_2(Q,P)=\Bigl(\xi(Q),\nabla\xi(Q)^\top P\Bigr)=\Bigl(YQ+e,YP\Bigr).\]
Using It\^o's lemma, these constrained variables evolve according to
\begin{align*}
    \begin{pmatrix}
        d\bar Q_t \\ d\bar P_t
    \end{pmatrix}
    =
    \begin{pmatrix}
        0 & I_{k\times k} + \frac1\eps YY^\top \\ -\frac1\eps YY^\top & -\gamma\bigl(I_{k\times k}+\frac1\eps YY^\top \bigr)  
    \end{pmatrix}
     \begin{pmatrix}
        \bar Q_t \\ \bar P_t
    \end{pmatrix} dt +
    \begin{pmatrix}
       0 \\ - Y\nabla V(Q_t)
    \end{pmatrix}dt
    + \sqrt{2\gamma\beta^{-1}}\begin{pmatrix}
        0 \\  Y
    \end{pmatrix}dW_t.
\end{align*}
Since $YY^\top>0$, there exist an orthonormal matrix such that $\tilde Y = O(YY^\top)O^\top \in \R^{k\times k}$ is a diagonal matrix. The transformed variables
\begin{equation*}
    \tilde Q = O \bar Q, \ \ \tilde P = O \bar P,
\end{equation*}
evolve according to the evolution
\begin{equation*}
    \begin{pmatrix}
        d\tilde Q_t \\ d\tilde P_t
    \end{pmatrix}
    =
    \begin{pmatrix}
        0 & I_{k\times k} + \frac1\eps \tilde Y \\ -\frac1\eps \tilde Y & -\gamma\bigl(I_{k\times k}+\frac1\eps \tilde Y \bigr)  
    \end{pmatrix}
     \begin{pmatrix}
        \tilde Q_t \\ \tilde P_t
    \end{pmatrix} dt +
    \begin{pmatrix}
       0 \\ - OY\nabla V(Q_t)
    \end{pmatrix}dt
    + \sqrt{2\gamma\beta^{-1}}\begin{pmatrix}
        0 \\  OY
    \end{pmatrix}dW_t
\end{equation*}
Comparing the evolution of these ($O$-transformed) constrained variables $(\tilde Q_t,\tilde P_t)$ to the constrained variables $(Q^1_t,P^1_t)$~\eqref{eq:intro-Phase-Cons-Fast} in the coordinate-projection setting (details in Section~\ref{sec:phase-space}), the only difference in the stiff-part is the presence of additional constants due to the diagonal matrix $\tilde Y$. Therefore, one can repeat the same procedure as in Section~\ref{sec:phase-space} to derive the $\eps\to 0$ limit of $(\tilde Q_t,\tilde P_t)$ and transform back to arrive at the limiting dynamics for the original constrained variables $(\bar Q_t,\bar P_t)$. Note that the assumptions on $V$ allows us to deal with the $\nabla V(Q_s)$ term. Similar transformation and results have been discussed in our earlier work on constrained linear-diffusions, see~\cite[Remark 3.11]{HartmannNeureitherSharma25a}.

\subsection[Towards nonlinear constraints]{Towards  nonlinear constraints}
In parts of this article we have focussed on constraining via stiff-confinement using coordinate-projection CG maps $\xi$ (which generalise to affine maps as discussed above). We now discuss the case of nonlinear maps $\xi$, which are extremely important from a practical viewpoint but are harder to analyse, as we now explain. It turns out that, even when working unit-mass Langevin dynamics with nonlinear spatial CG maps $\xi$, constraining the phase-space via $\Xi_2$~\eqref{eq:genXi} is a good choice, see related discussions in~\cite{LelievreRoussetStoltz12,DLPSS18}. 

Following the discussion about affine constraints,  consider the Langevin dynamics~\eqref{eq:Lang-SC} with $K=A-J$ and $\Xi_2$ where $\xi:\R^d\to\R$ is scalar-valued (for simplicity of discussion), smooth, and non-degenerate i.e\ $|\nabla\xi|^2>0$. Using It\^o's lemma, the behaviour of the constrained variables $(\bar Q_t,\bar P_t)=\Xi_2(Q_t,P_t)\in\R^{2k}$ is given by
    \begin{align*}
        d\bar Q_t &= \bar P_t dt + \frac{1}{\eps} |\nabla\xi(Q_t)|^2\bar P_t dt \\
        d\bar P_t &= \Bigl( P_t^\top \nabla^2\xi(Q_t)P_t - (\nabla\xi^\top\nabla V)(Q_t) - \gamma \bar P_t\Bigr)dt  - \frac1\eps|\nabla \xi(Q_t)|^2 \Bigl( \bar Q_t + \gamma\bar P_t \Bigr)dt + \sqrt{2\gamma\beta^{-1}} \nabla\xi^\top(Q_t) dW_t.
    \end{align*}
    Since $|\nabla\xi(q)|>0$ for any $q\in \R^{d}$, we can introduce the time-rescaling 
    \begin{equation*}
        \tau(t) = \int_0^t |\nabla\xi(Q_s)|^2ds,
    \end{equation*}
    using which the constrained dynamics above can be rewritten as
    \begin{align*}
        \begin{pmatrix}
            d\bar Q_\tau \\ d\bar P_\tau 
        \end{pmatrix}
        = \begin{pmatrix}
            0 & \frac1\eps \\ -\frac1\eps & -\frac{\gamma}{\eps}
        \end{pmatrix} 
        \begin{pmatrix}
        \bar Q_\tau \\ \bar P_\tau      
        \end{pmatrix} d\tau + F(Q_\tau,P_\tau)dt + \begin{pmatrix}
             0 \\  \sqrt{2\gamma\beta^{-1}}
        \end{pmatrix}d\bar W_\tau,
    \end{align*}
    where $\bar W_t$ is a standard $2$-d Brownian motion and $F:\R^{2d}\to \R^{2k}$ is defined as 
    \begin{align}\label{eq:nonLinCG-rem}
        F(q,p) = 
        \begin{pmatrix}
            |\nabla\xi(q)|^{-2} \\
            |\nabla\xi(q)|^{-2} \Bigl( p^\top \nabla^2\xi(q)p - (\nabla\xi^\top\nabla V)(q) - \gamma \nabla\xi^\top(q)p \Bigr) 
        \end{pmatrix}.
    \end{align}
Note that the linear part of the coarse-grained dynamics is exactly the same as the linear part for (coordinate-projection) phase-space constrained dynamics~\eqref{eq:intro-Phase-Cons-Fast} studied in Section~\ref{sec:phase-space}. Recall that the proof of the soft-constrained limit employs variation of constants (see Appendix~\ref{App:PS-cons-proof}) and uses the fact that $\nabla_{q^1} V$ is bounded (recall assumption~\eqref{ass:Vbound}). In this nonlinear setting this boundedness assumption would correspond to assuming that $F$ defined in~\eqref{eq:nonLinCG-rem} is bounded. This is, however, not possible since one of the terms is of the form $\R^{2d}\ni(q,p)\mapsto p^\top \nabla ^2\xi(q)p$ which scales quadratically in $p$ and therefore cannot be bounded. Consequently, dealing with nonlinear CG maps would require a careful analysis of the asymptotic behaviour of $|\nabla\xi(Q_t)|^{-2}P_t^\top \nabla^{2}\xi(Q_t)P_t$ as $\eps\to 0$. This study is left to future work.

\section*{Acknowledgments}
The authors thank Chetan Pahlajani for pointing out an error in the handling of stochastic integrals appearing in an earlier draft of this paper. 
This research has been partially funded by the German Federal Government, the
Federal Ministry of Education and Research and the State of Brandenburg within the framework of the joint
project EIZ: Energy Innovation Center (project numbers 85056897 and 03SF0693A) and by the Deutsche
Forschungsgemeinschaft (DFG) through the grant CRC 1114: Scaling Cascades in Complex Systems (project
no.\ 235221301). US acknowledges support from CRC 1114 for hosting the visit to FU Berlin and BTU Cottbus-Senftenberg. 

\begin{appendices}
\addtocontents{toc}{\protect\setcounter{tocdepth}{1}}

\section{Variation of constants}\label{app:VarConst}

Throughout the proofs in this article we will make use of the following simple result which summarises an explicit solution for a class of SDEs with linear and nonlinear drift terms.

\begin{prop}[Variations of constants] \label{prop:varofconst}
For $t>0$, let $(x_t,y_t)\in\R^\ell\times\R^m$ be a strong unique solution to a coupled SDE system, where $x_t$ evolves according to  
\begin{align} \label{eq:SDEmatrixnot}
dx_t = A x_t dt+ f(x_t,y_t,t) dt + C dW_t.
\end{align} 
Here $A \in \R^{\ell \times \ell}$ and $C \in \R^{\ell \times k}$ are constant matrices, $W_t$ is a standard Brownian motion in $\R^k$ and  $x_0\in\R^\ell$ is the initial condition. Furthermore, assume that $f:\R^{\ell+m}  \times \R_{\geq 0} \to \R^\ell$ satisfies 
\begin{enumerate}
    \item uniformly Lipschitz continuous in space, i.e.\ there exists $L_f>0$ such that for any $z,z'\in\R^{\ell+m}$ we have  $|f(z,t)-f(z',t)|\leq L_f|z-z'|$;
    \item sublinear growth, i.e.\ there exists a constant $c>0$ such that for any $z\in\R^{\ell+m}$ and $t>0$ we have $|f(z,t)|\leq c(1+|z|)$. 
\end{enumerate}
Then $x_t$ can be explicitly written as
\begin{equation}\label{eq:VarofCon-sol}
    x_t = e^{At}x_0 + \int_0^t e^{A(t-s)} f(x_s,y_s,s) ds + \int_0^t e^{A(t-s)} C dW_s\,.
\end{equation}
\end{prop}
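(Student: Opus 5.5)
The plan is to treat $y_t$ as a given adapted continuous process, so that $g(s):=f(x_s,y_s,s)$ becomes an adapted, pathwise continuous forcing term. The claimed formula is then checked by applying Itô's product rule to $e^{-At}x_t$. Strong existence and uniqueness of $(x_t,y_t)$ is assumed, and the Lipschitz and sublinear growth hypotheses serve only to guarantee that $g$ is locally integrable and square-integrable on compacts. This follows from the standard moment bound $\E[\sup_{s\le t}|(x_s,y_s)|^2]<\infty$ for strong solutions with Lipschitz, linear-growth coefficients, so every integral below is well defined.

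First, I would set $z_t:=e^{-At}x_t$. Since $t\mapsto e^{-At}$ is deterministic, $C^1$, and of bounded variation, Itô's product rule has no quadratic covariation term:
\begin{equation*}
dz_t = -Ae^{-At}x_t\,dt + e^{-At}dx_t = e^{-At}f(x_t,y_t,t)\,dt + e^{-At}C\,dW_t,
\end{equation*}
where the terms $\pm Ae^{-At}x_t\,dt$ cancel because $A$ commutes with $e^{-At}$. Integrating from $0$ to $t$ gives
\begin{equation*}
e^{-At}x_t = x_0+\int_0^t e^{-As}f(x_s,y_s,s)\,ds+\int_0^t e^{-As}C\,dW_s .
\end{equation*}
Multiplying on the left by $e^{At}$, and using that $e^{At}$ is deterministic and can be moved inside both the Lebesgue and the Itô integral, yields \eqref{eq:VarofCon-sol}.

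The only point needing care, which is the mild obstacle, is justifying that $e^{At}$ can be moved inside the stochastic integral. For fixed $t$ this is linearity of the Itô integral with a deterministic matrix multiplier. For the process in $t$, the integrand $e^{A(t-s)}C$ depends on $t$, but it is deterministic and continuous, so $\int_0^t e^{A(t-s)}C\,dW_s = e^{At}\int_0^t e^{-As}C\,dW_s$ holds almost surely for each $t$. Since both sides admit continuous versions, the identity then holds as processes.

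Alternatively, I could verify directly that the right-hand side of \eqref{eq:VarofCon-sol} satisfies \eqref{eq:SDEmatrixnot} and invoke uniqueness. The product-rule route avoids needing uniqueness of the $x$-equation alone, so I would present that one.
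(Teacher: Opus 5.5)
Your argument is correct, but it runs in the opposite direction from the paper's. You start from the given solution and derive the formula. Applying It\^o's product rule to $e^{-At}x_t$ produces no covariation term, since $t\mapsto e^{-At}$ is deterministic, $C^1$ and of finite variation. The terms $\pm Ae^{-At}x_t\,dt$ then cancel, and \eqref{eq:VarofCon-sol} follows after multiplying by $e^{At}$.

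The paper goes the other way. It first uses stochastic integration by parts to rewrite the stochastic convolution as $CW_t+\int_0^t Ae^{A(t-s)}CW_s\,ds$. With $x_t$ defined by the right-hand side of \eqref{eq:VarofCon-sol}, this makes $g_t=x_t-CW_t$ a pathwise differentiable function. It then checks by ordinary calculus that $\dot g_t=Ax_t+f(x_t,y_t,t)$, i.e.\ that the formula defines a solution of \eqref{eq:SDEmatrixnot}.

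What the paper's verification route buys is that everything reduces to classical calculus along each Brownian path. Its cost is that, to conclude the given solution equals the formula, it needs a uniqueness step the paper leaves implicit. This can be strong uniqueness for the SDE, or more simply freezing $f$ along the true solution, so that the difference solves $\dot e=Ae$ with $e_0=0$. Your derivation needs no uniqueness argument. Nor does it need the Lipschitz or growth hypotheses beyond what makes the integrals well defined.

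One small correction to your preliminaries: you need neither a moment bound nor continuity of $f$ in $t$. The moment bound would require assumptions on the unspecified $y$-dynamics, and continuity in $t$ is not assumed. Pathwise continuity of $(x,y)$ together with the linear growth bound already makes $s\mapsto f(x_s,y_s,s)$ locally bounded, hence locally integrable, on every path.
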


The existence and uniqueness of the strong solution is standard (see for instance~\cite[Theorem 26.8]{Klenke13}). The integral form of the solution follows by using variations of constants along with integration by parts for It\^o integrals. 
\begin{proof}
Using the stochastic integration by parts formula (see for instance~\cite[Theorem 18.16]{Kallenberg21}) and $W_0=0$ almost surely we find
\begin{align}\label{eq:stoc-IntByParts}
    \int_0^t e^{A(t-s)} C dW_s = C W_t + \int_0^t A e^{A(t-s)} C W_s ds 
\end{align}
almost surely. Hence $x_t - CW_t \eqqcolon g_t$, where $x_t$ is given by~\eqref{eq:VarofCon-sol}, is differentiable and satisfies 
\begin{equation*}
    \dot g_t = A \biggl( e^{At}x_0 + \int_0^t e^{A(t-s)} f(x_s,y_s,s) ds +  \int_0^t A e^{A(t-s)} C W_s ds \biggr) + f(x_t,y_t,t) + ACW_t = A x_t + f(x_t,y_t,t).
\end{equation*}
Therefore, using the definition of $g_t$ we have
\begin{align*}
    x_t = g_0 + \int_0^t  \frac{dg_s}{ds} ds + CW_t 
        = x_0 + \int_0^t \bigl[Ax_s + f(x_s,y_s,s)\bigr]ds + \int_0^t C\, dW_s,
\end{align*}
i.e.\ $x_t$ solves~\eqref{eq:SDEmatrixnot}. 
\end{proof}

In what follows, we will apply Proposition~\ref{prop:varofconst} to the following Langevin dynamics in $\R^2$
\begin{equation} \label{eq:lemsollangevin}
\begin{aligned} 
    dq_t &= ap_t dt, \\
    dp_t &= \bigl(- \nabla_{q} V(q_t,\tilde q_t) - b q_t - c p_t\bigr) dt + \tilde c  dB_t,
\end{aligned} 
\end{equation} 
where $(q_t,p_t)\in\R\times\R$, $V:\R^{1+m}\to \R$, $B_t$ is a standard one-dimensional Brownian motion and $a,b,c,\tilde c>0$ are positive constants. Here $\tilde q_t \in \R^{m}$ is a latent variable which itself solves an SDE such that the system $(q_t,p_t,\tilde q_t)$ is assumed to have a unique strong solution. We will often use the variables $B_t$ and $W_t$ interchangeably to denote the Brownian motion.

Now, Proposition~\ref{prop:varofconst} applies to~\eqref{eq:lemsollangevin} with 
\begin{align} \label{eq:choiceforfuncoupled}
    x = (q,p)^\top\,, \  y_t=\tilde q_t, \ \  A = - \begin{pmatrix}
0 & -a \\ b & c 
\end{pmatrix}\,,  \ \ f(x_t,y_t,t) =\begin{pmatrix} 0 \\ -\nabla_q V(q_t,\tilde q_t) \end{pmatrix}\,, \ \ C = \begin{pmatrix}   0 \\  \tilde c\end{pmatrix} \,. 
\end{align}

The following three results use Proposition~\ref{prop:varofconst} to arrive at explicit solutions to~\eqref{eq:choiceforfuncoupled} in three cases: $\theta\neq 0$, $0 \neq  \theta  \in \mathbb C$  and $\theta=0$, where 
\begin{equation}\label{def:theta}
    \theta \coloneqq \sqrt{c^2-4ab}.
\end{equation}
Here $a,c>0$ and $b\neq 0$ are the constants appearing  in~\eqref{eq:lemsollangevin}.

\begin{prop}[Solutions for $\theta\neq 0$] \label{prop:sollangevin}
Let $c^2 \neq 4ab$ with $\theta$ defined in~\eqref{def:theta}. Then~\eqref{eq:lemsollangevin} admits the solution 
\begin{subequations}
\begin{align}
\begin{split}\label{eq:q-sol}
\begin{aligned}
    q_t &=  \Bigl[ \mfrac{1}{2}\Bigl(1-\mfrac{c}{\theta}\Bigr)e^{-\frac{1}{2}(c+\theta) t} + \mfrac{1}{2}\Bigl(1+\mfrac{c}{\theta}\Bigl)e^{-\frac{1}{2}(c-\theta) t} \Bigr] q_0 - \mfrac{a}{\theta} \Bigl(e^{-\frac{1}{2}(c+\theta) t} - e^{-\frac{1}{2}(c-\theta) t}\Bigr)p_0 \\ 
    & \ + \mfrac{a}{\theta} \int_0^t\bigl( e^{-\frac{1}{2}(c+ \theta)(t-s)} -e^{-\frac{1}{2}(c - \theta)(t-s)}\bigr) \nabla V(q_s,\tilde q_s) ds + \mfrac{a \tilde c }{\theta} \int_0^t \bigl(e^{-\frac{1}{2}(c - \theta)(t-s)} - e^{-\frac{1}{2}(c + \theta)(t-s)}\bigr) dB_s  \,,   
\end{aligned}
\end{split}\\
\begin{split}\label{eq:p-sol}
    \begin{aligned}
    p_t &= \mfrac{b}{\theta} \Bigl(e^{-\frac{1}{2}(c+\theta) t} - e^{-\frac{1}{2}(c-\theta) t}\Bigr)q_0  + \mfrac{a}{\theta} \Bigl[\mfrac{2b}{c-\theta}e^{-\frac{1}{2}(c+\theta) t} - \mfrac{2b}{c+\theta}e^{-\frac{1}{2}(c-\theta) t}\Bigr] p_0  \\
    & \quad - \mfrac{2ab}{\theta} \int_0^t \Bigl( \mfrac{1}{c- \theta}e^{-\frac{1}{2}(c+\theta) (t-s)} - \mfrac{1}{c + \theta} e^{-\frac{1}{2}(c-\theta) (t-s)}\Bigr) \nabla V(q_s,\tilde q_s) ds  \\ 
    & \qquad \quad  +\mfrac{2a b\tilde c}{\theta} \int_0^t \Bigl( \mfrac{1}{c- \theta}e^{-\frac{1}{2}(c+\theta) (t-s)} - \mfrac{1}{c + \theta} e^{-\frac{1}{2}(c-\theta) (t-s)}\Bigr) dB_s.
    \end{aligned}
\end{split}
\end{align}
\end{subequations}
\end{prop}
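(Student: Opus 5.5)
The plan is to apply Proposition~\ref{prop:varofconst} with the choices \eqref{eq:choiceforfuncoupled}. The solution is then
\begin{equation*}
x_t = e^{At}x_0 + \int_0^t e^{A(t-s)} f(x_s,y_s,s)\,ds + \int_0^t e^{A(t-s)} C\,dB_s .
\end{equation*}
So the whole task reduces to computing $e^{At}$ explicitly for $A=\begin{pmatrix}0 & a\\ -b & -c\end{pmatrix}$ and reading off its entries. Note that $f$ and $C$ only have second components. Hence only the second column of $e^{A(t-s)}$ enters the two integrals, while both columns act on $x_0=(q_0,p_0)^\top$.

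First I compute the spectrum of $A$. The characteristic polynomial is $\lambda^2+c\lambda+ab$, so the eigenvalues are $\lambda_\pm=-\tfrac12(c\mp\theta)$, with $\theta$ as in \eqref{def:theta}. The assumption $c^2\neq 4ab$ gives $\lambda_+\neq\lambda_-$, so $A$ is diagonalisable over $\mathbb C$. Rather than diagonalising, I use Sylvester's formula (Lagrange interpolation):
\begin{equation*}
e^{At}=\frac{e^{\lambda_+t}(A-\lambda_-I)-e^{\lambda_-t}(A-\lambda_+I)}{\lambda_+-\lambda_-},\qquad \lambda_+-\lambda_-=\theta .
\end{equation*}
Evaluating the entries:
\begin{itemize}
\item The $(1,1)$ entry is $\tfrac{1}{\theta}\bigl[(-\lambda_-)e^{\lambda_+t}+\lambda_+e^{\lambda_-t}\bigr]=\tfrac12(1+\tfrac c\theta)e^{\lambda_+t}+\tfrac12(1-\tfrac c\theta)e^{\lambda_-t}$. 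This matches the $q_0$-coefficient in \eqref{eq:q-sol}.
\item The $(1,2)$ entry is $\tfrac a\theta(e^{\lambda_+t}-e^{\lambda_-t})$. This gives the $p_0$-coefficient and, after multiplying by $-\nabla V$ and by $\tilde c$, the two integral kernels in \eqref{eq:q-sol}.
\item The $(2,1)$ entry is $-\tfrac b\theta(e^{\lambda_+t}-e^{\lambda_-t})$, which is the $q_0$-coefficient in \eqref{eq:p-sol}.
\item The $(2,2)$ entry is $\tfrac1\theta\bigl[(-c-\lambda_-)e^{\lambda_+t}-(-c-\lambda_+)e^{\lambda_-t}\bigr]$.
\end{itemize}

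The only nontrivial step is rewriting the $(2,2)$ entry in the form used in \eqref{eq:p-sol}, with prefactors $\tfrac{2b}{c\mp\theta}$. For this I use $(c-\theta)(c+\theta)=4ab$. This gives $-c-\lambda_-=-\tfrac12(c-\theta)=-\tfrac{2ab}{c+\theta}$, and similarly $-c-\lambda_+=-\tfrac{2ab}{c-\theta}$. Substituting these turns the $(2,2)$ entry, and with it the kernels of both integrals in the $p$-equation, into the stated combinations of $\tfrac{1}{c\pm\theta}e^{-\frac12(c\mp\theta)(t-s)}$.

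I expect the main obstacle to be bookkeeping rather than analysis. The conventions in the statement pair $e^{-\frac12(c+\theta)t}$ with $\tfrac1{c-\theta}$ and carry an overall sign, so I will track signs carefully. As a final check, I will verify that at $t=0$ the formulas reduce to $(q_0,p_0)$, and that differentiating the kernels reproduces $\dot q=ap$ and $\dot p=-bq-cp$. Any overall sign slip in the statement would then show up, and I would correct it by matching $e^{A\cdot 0}=I$.
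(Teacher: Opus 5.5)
Your proposal is correct and is essentially the paper's argument: compute $e^{At}$ explicitly and substitute it into Proposition~\ref{prop:varofconst}. The only difference is cosmetic: you obtain $e^{At}$ from Sylvester's interpolation formula, whereas the paper uses the eigendecomposition $S\,\mathrm{diag}(e^{\lambda_1 t},e^{\lambda_2 t})S^{-1}$, and your four entries (including the rewriting of the $(2,2)$ entry via $(c-\theta)(c+\theta)=4ab$) agree with \eqref{eq:expAt}, so the signs in the statement are already right and need no correction.
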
 
\begin{proof}
For $c^2 \neq 4ab$ and $\theta=\sqrt{c^2 - 4ab}$, the matrix $A$ in \eqref{eq:choiceforfuncoupled} admits the eigenvalues and eigenvectors
\begin{align*}
    \lambda_{1} = - \frac{1}{2}(c + \theta), \ v_1 = 
    \begin{pmatrix}\frac{\lambda_2}{b} \\  1\end{pmatrix};
    \quad 
    \lambda_{2} = - \frac{1}{2}(c - \theta), \ v_2 = 
    \begin{pmatrix}\frac{\lambda_1}{b} \\  1\end{pmatrix}.
\end{align*}
Writing $S=(v_1 \ v_2) = \begin{pmatrix}
\frac{\lambda_2}{b} & \frac{\lambda_1}{b} \\ 1 & 1
\end{pmatrix}$ and using $S^{-1} = \frac{b}{\theta} \begin{pmatrix} 1 & \frac{- \lambda_1}{b} \\ -1 & \frac{\lambda_2}{b}
\end{pmatrix} $ we calculate
\begin{align} \label{eq:expAt}
    e^{At} = S \begin{pmatrix}
    e^{\lambda_1 t} & 0 \\ 0 & e^{\lambda_2 t}
    \end{pmatrix} S^{-1}  = 
    \begin{pmatrix}
        \frac{1}{2}(-\frac{c}{\theta} + 1)e^{\lambda_1 t} + \frac{1}{2}(\frac{c}{\theta} + 1)e^{\lambda_2 t} &  -\frac{a}{\theta} (e^{\lambda_1 t} - e^{\lambda_2 t})\\
        \frac{b}{\theta} (e^{\lambda_1 t} - e^{\lambda_2 t}) & \frac{a}{\theta} \left(- \frac{b}{\lambda_2}e^{\lambda_1 t} + \frac{b}{\lambda_1}e^{\lambda_2 t}\right)
    \end{pmatrix}.
\end{align}
where we have used $\lambda_1 \lambda_2 = ab$. 
The result then follows from Proposition~\ref{prop:varofconst} along with~\eqref{eq:choiceforfuncoupled}. 
\end{proof}
The solution above can be further simplified when $\theta\in\mathbb C$ as  discussed below.  
\begin{corollary}[Solutions for $0\neq\theta \in \mathbb C \backslash \mathbb{R}$]  \label{cor:sol_underdamped} 
Let $c^2 < 4ab$, i.e.\ $\theta$ defined in~\eqref{def:theta} satisfies $\theta= i|\theta| = i \sqrt{4ab-c^2}$. Then~\eqref{eq:lemsollangevin} admits the solution
\begin{subequations}
\begin{align}
    \begin{split} \label{eq:q-sol-complextheta}
       q_t &= \Bigl[ \cos\Bigl(t\mfrac{|\theta|}{2}\Bigr)+ \frac{c}{|\theta|}\sin\Bigl(t\mfrac{|\theta|}{2}\Bigr)  \Bigr] e^{-\frac{c}{2}t}q_0 + \frac{2a}{|\theta|} \sin\Bigl(t\mfrac{|\theta|}{2}\Bigr)e^{-\frac{c}{2}t}p_0 \\ 
       & \qquad - \mfrac{2a}{|\theta|} \int_0^t \sin\Bigl((t-s)\mfrac{|\theta|}{2}\Bigr)e^{-\frac{c}{2}(t-s)}  \nabla V(q_s,\tilde q_s) ds   +  \frac{2a\tilde c}{|\theta|} \int_0^t \sin\Bigl((t-s)\mfrac{|\theta|}{2}\Bigr)e^{-\frac{c}{2}(t-s)}  dB_s,
    \end{split}\\
    \begin{split}\label{eq:p-sol-complextheta}
    p_t &=  - \frac{2b}{|\theta|}  \sin\Bigl(t\mfrac{|\theta|}{2}\Bigr)e^{-\frac{c}{2}t}q_0 + \Bigl[\cos\Bigl(t\mfrac{|\theta|}{2}\Bigr) - \frac{c}{|\theta|}\sin\Bigl(t \mfrac{|\theta|}{2}\Bigr)\Bigr]e^{-\frac{c}{2}t}p_0 \\
    & \qquad - \int_0^t \Bigl[ \cos\Bigl((t-s)\frac{|\theta|}{2}\Bigr) - \frac{c}{|\theta|} \sin\Bigl((t-s)\frac{|\theta|}{2}\Bigr) \Bigr] \nabla V(q_s,\tilde q_s) e^{-\frac{c}{2}(t-s)} ds \\
    &\qquad+ \  \tilde c \int_0^t \left( \cos\Bigl((t-s)\mfrac{|\theta|}{2}\Bigr) - \frac{c}{|\theta|} \sin\Bigl((t-s)\mfrac{|\theta|}{2}\Bigr) \right) e^{-\frac{c}{2}(t-s)} dB_s.
    \end{split}
\end{align}
\end{subequations}
\end{corollary}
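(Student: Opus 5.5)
The corollary follows from Proposition~\ref{prop:sollangevin} by substituting $\theta = i|\theta|$ into \eqref{eq:q-sol}--\eqref{eq:p-sol} and simplifying with Euler's formula. Note that Proposition~\ref{prop:sollangevin} only needs $c^2\neq 4ab$ and is purely algebraic (diagonalising $A$ over $\mathbb C$). It therefore holds verbatim for complex $\theta$ once we fix the branch $\theta = i\sqrt{4ab-c^2}$. Then $\lambda_{1,2} = -\tfrac{c}{2} \mp i\tfrac{|\theta|}{2}$ are complex conjugates, so
\[
e^{-\frac12(c\pm\theta)t} = e^{-\frac c2 t}\Bigl(\cos\bigl(t\tfrac{|\theta|}{2}\bigr) \mp i \sin\bigl(t\tfrac{|\theta|}{2}\bigr)\Bigr).
\]
The plan is to compute each entry of $e^{At}$ in \eqref{eq:expAt} with these identities. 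The full solution then follows from Proposition~\ref{prop:varofconst} with the data \eqref{eq:choiceforfuncoupled}, replacing $t$ by $t-s$ inside the integrals.

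The entry-by-entry computation goes as follows. The $(1,1)$ entry is $\tfrac12(e^{\lambda_1 t}+e^{\lambda_2 t}) + \tfrac{c}{2\theta}(e^{\lambda_2 t}-e^{\lambda_1 t})$. Since $e^{\lambda_2 t}-e^{\lambda_1 t} = 2i\sin(t|\theta|/2)e^{-ct/2}$ and $1/\theta = -i/|\theta|$, it becomes $\bigl[\cos + \tfrac{c}{|\theta|}\sin\bigr]e^{-ct/2}$. The off-diagonal entries $\mp\tfrac{a}{\theta}(e^{\lambda_1 t}-e^{\lambda_2 t})$ and $\tfrac{b}{\theta}(e^{\lambda_1 t}-e^{\lambda_2 t})$ become $\tfrac{2a}{|\theta|}\sin(\cdot)e^{-ct/2}$ and $-\tfrac{2b}{|\theta|}\sin(\cdot)e^{-ct/2}$, respectively. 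For the $(2,2)$ entry, first use $\lambda_1\lambda_2 = ab$ to write $\tfrac{a}{\theta}\bigl(-\tfrac{b}{\lambda_2}e^{\lambda_1 t}+\tfrac{b}{\lambda_1}e^{\lambda_2 t}\bigr) = \tfrac{1}{\theta}\bigl(\lambda_2 e^{\lambda_2 t} - \lambda_1 e^{\lambda_1 t}\bigr)$. This equals $\tfrac12(e^{\lambda_1 t}+e^{\lambda_2 t}) - \tfrac{c}{2\theta}(e^{\lambda_2 t}-e^{\lambda_1 t})$, which gives $\bigl[\cos - \tfrac{c}{|\theta|}\sin\bigr]e^{-ct/2}$. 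The result is a real matrix, as it must be, and it confirms that the complex continuation is legitimate.

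Finally, apply $e^{A(t-s)}$ to $f = (0,-\nabla V)^\top$ and to $C = (0,\tilde c)^\top$. This picks out the second column of $e^{A(t-s)}$, which yields the coefficients of the $ds$- and $dB_s$-integrals in \eqref{eq:q-sol-complextheta}--\eqref{eq:p-sol-complextheta}. The only real obstacle is bookkeeping of signs and factors of $i$. Here the $(2,2)$ entry is the easiest place to slip, because of the $1/\lambda_i$ factors. Rewriting it via $\lambda_1\lambda_2=ab$ before substituting avoids this. As an independent check, one can verify that the resulting matrix $\Phi(t)$ satisfies $\Phi(0)=I$ and $\dot\Phi = A\Phi$.
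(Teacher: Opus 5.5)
Your proposal is correct and follows essentially the paper's route: substitute $\theta = i|\theta|$ into Proposition~\ref{prop:sollangevin} and simplify with Euler's formula. The only difference is bookkeeping. You simplify the entries of $e^{At}$ in \eqref{eq:expAt}, using $\lambda_1\lambda_2=ab$ to rewrite the $(2,2)$ entry as $\frac{1}{\theta}(\lambda_2e^{\lambda_2t}-\lambda_1e^{\lambda_1t})$, whereas the paper works directly on the solution formulas and handles the $p_t$ coefficients through the explicit complex identities for $\frac{1}{\theta(c\mp\theta)}$.
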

\begin{proof}
The solution for $q_t$ follows by applying 
\begin{align*}
\mfrac{1}{\theta}\Bigl[e^{ \frac{\theta}{2}(t-s)} - e^{-\frac{\theta}{2}(t-s)}\Bigr] = \mfrac{1}{i|\theta|}\Bigl[ e^{ i(t-s)\frac{|\theta|}{2}} - e^{-i(t-s)\frac{|\theta|}{2}} \Bigr]= \mfrac{2}{|\theta|}\sin\Bigl((t-s)\mfrac{|\theta|}{2}\Bigr)\,,
 \end{align*}
to $q_t$ in Proposition~\ref{prop:sollangevin}. Using $|\theta|^2 + c^2 = 4ab$ we note that
\begin{align*}
     \frac{1}{\theta(c-\theta)} &= \frac{1}{|\theta|i(c-|\theta|i)} = \frac{1}{|\theta|^2 + |\theta|c i} = \frac{|\theta|^2 - |\theta|ci}{|\theta|^4 + |\theta|^2 c^2} = \frac{1}{|\theta|^2+c^2} - \frac{c}{|\theta|(|\theta|^2+c^2)}i = \frac{1}{4ab} - \frac{c}{4ab|\theta|}i, \\
     \frac{1}{\theta(c+\theta)} &= -\overline{\frac{1}{\theta(c-\theta)}}=-\frac{1}{4ab} - \frac{c}{4ab|\theta|}i\,,  
 \end{align*}
which gives
\begin{align*}
     \frac{2a}{\theta} \left(\frac{1}{c- \theta}e^{-\frac{1}{2}\theta t} - \frac{1}{c + \theta } e^{\frac{1}{2}\theta t}\right) &= \frac{1}{2b} (e^{-it\frac{|\theta|}{2}} + e^{it\frac{|\theta|}{2}} ) - \frac{c}{2b|\theta|} i (e^{-it\frac{|\theta|}{2}} - e^{it\frac{|\theta|}{2}})\\
     &=\frac{1}{b} \cos\biggl(\frac{|\theta|t}{2}\biggr) - \frac{c}{b|\theta|} \sin\biggl(\frac{|\theta|t}{2}\biggr).
\end{align*}
The solution for $p_t$ then follows by applying this identity to $p_t$ in Proposition~\ref{prop:sollangevin}. 
\end{proof}

The following result states the explicit solution for $\theta=0$.
\begin{prop}[Solution for $\theta=0$] \label{prop:sollangevin_Adeg}
 Let $c^2 = 4ab$ with $\theta$ defined in~\eqref{def:theta}. Then~\eqref{eq:lemsollangevin} admits the solution 
\begin{align*}
    q_t &= e^{-\frac{c}{2}t}\Bigl[\Bigl(1+\mfrac{c}{2}t\Bigr)q_0 + atp_0  \Bigr] -\int_0^t e^{- \frac{c}{2} (t-s) }a(t-s) \nabla V(q_s,\tilde q_s) ds + \tilde c\int_0^t e^{- \frac{c}{2} (t-s) }a(t-s) dB_s,\\
    p_t &=  e^{-\frac{c}{2}t}\Bigl[-btq_0 + \Bigl(1-\mfrac{c}{2}t\Bigr)p_0  \Bigr] - \int_0^t e^{- \frac{c}{2} (t-s) }\Bigl[1-\mfrac{c}{2}(t-s)\Bigr] \nabla V(q_s,\tilde q_s) ds \\
    &\qquad + \tilde c\int_0^t e^{- \frac{c}{2} (t-s) }\Bigl[1-\mfrac{c}{2}(t-s)\Bigr] dB_s.
\end{align*}
\end{prop}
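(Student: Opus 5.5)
The plan is to compute the matrix exponential $e^{At}$ for the degenerate case and then invoke Proposition~\ref{prop:varofconst} with the choices~\eqref{eq:choiceforfuncoupled}, exactly as in the proof of Proposition~\ref{prop:sollangevin}. When $c^2=4ab$ the matrix
\[
A=\begin{pmatrix}0 & a\\ -b & -c\end{pmatrix}
\]
has the double eigenvalue $\lambda=-\tfrac c2$. Since $a>0$, $A$ is not a multiple of the identity, so it is not diagonalisable and the eigenvector approach of Proposition~\ref{prop:sollangevin} is unavailable. Instead I write $A=-\tfrac c2 I+N$ with
\[
N=\begin{pmatrix}\tfrac c2 & a\\ -b & -\tfrac c2\end{pmatrix}.
\]
By Cayley--Hamilton, or by direct computation, $N^2=(\tfrac{c^2}{4}-ab)I=0$. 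This nilpotency is exactly where the assumption $\theta=0$ enters.

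Since $-\tfrac c2 I$ commutes with $N$, the exponential factorises and the series for $e^{Nt}$ terminates after the linear term:
\[
e^{At}=e^{-\frac c2 t}(I+tN)=e^{-\frac c2 t}\begin{pmatrix}1+\tfrac c2 t & at\\ -bt & 1-\tfrac c2 t\end{pmatrix}.
\]
Reading off the first row applied to $(q_0,p_0)^\top$ gives the homogeneous part of $q_t$, and the second row gives that of $p_t$.

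For the inhomogeneous terms, $f$ and $C$ only have a second component, so only the second column of $e^{A(t-s)}$ contributes. That column is $e^{-\frac c2(t-s)}\bigl(a(t-s),\,1-\tfrac c2(t-s)\bigr)^\top$. Multiplying by $-\nabla V(q_s,\tilde q_s)$ and by $\tilde c$ respectively, and inserting into~\eqref{eq:VarofCon-sol}, yields exactly the drift and stochastic integrals in the statement.

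The only genuinely nonroutine point is justifying $N^2=0$; the rest is bookkeeping of signs. As a sanity check, the result can be compared with the limit $\theta\to0$ of~\eqref{eq:q-sol}, using $(e^{\theta t/2}-e^{-\theta t/2})/\theta\to t$, which reproduces the $at$ coefficient.
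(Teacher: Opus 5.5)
Your proof is correct, and its overall structure matches the paper's: compute $e^{At}$ in closed form, then substitute it together with \eqref{eq:choiceforfuncoupled} into Proposition~\ref{prop:varofconst}. The only difference is how you obtain the exponential. The paper uses a Jordan decomposition $A=SLS^{-1}$, built from the eigenvector $(-\tfrac{2a}{c},1)^\top$ and the generalised eigenvector $(-\tfrac{1}{b},0)^\top$, which parallels the diagonalisation in Proposition~\ref{prop:sollangevin}. Your splitting $A=-\tfrac{c}{2}I+N$ with $N^2=(\tfrac{c^2}{4}-ab)I=0$ reaches \eqref{eq:expA-theta0} without computing eigenvectors or inverting $S$. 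It also does not need $b\neq 0$, which in the paper follows only from $c^2=4ab$ with $c>0$. Finally, it shows directly where the hypothesis $\theta=0$ enters.
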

\begin{proof} Here $A$ is not diagonalizable since it's only eigenvalue $\lambda= - \frac{c}{2}$ (with eigenvector $(-\frac{2a}{c},1)^\top$) has algebraic multiplicity $2$. The generalised eigenvector corresponding to $\lambda$ is $(-\frac{1}{b},0)^\top$ and therefore using $S\coloneqq\left(\begin{smallmatrix}
    -\frac{2a}{c} & -\frac{1}{b} \\ 1 & 0
\end{smallmatrix}\right)$, 
$L\coloneqq\left(\begin{smallmatrix}
    \lambda & 1 \\ 0 & \lambda
\end{smallmatrix}\right)$, 
$A$ admits the decomposition $A=SLS^{-1}$ and we calculate 
\begin{equation}\label{eq:expA-theta0}
    e^{At} = e^{-\frac{c}{2}t} \begin{pmatrix}
1+\frac{c}{2}t & at \\ -bt & 1- \frac{c}{2}t
\end{pmatrix}.
\end{equation}
The required result then follows by substituting this matrix exponential  along with~\eqref{eq:choiceforfuncoupled} into Proposition~\ref{prop:varofconst}. 
\end{proof}

\section{Proofs of asymptotic limits}\label{app:LimitProofs}

In this section we provide the proofs for various limits discussed in Section~\ref{sec:InfConf} and Section~\ref{sec:PhysicalLimits}. All the proofs follow a similar strategy which we now outline.

\subsection{Proof strategy for the constrained variables}\label{sec:Strategy-constrained}
For the constrained variables, denoted by $x^1$ in the following, we investigate the $\eps\to 0$ behaviour of  $x_t=(x^1_t,x^2_t)\in\R^m$ where $x^1$  solves the SDE of the type
\begin{equation*}
    x^1_t = f_0(t,x_0,\eps) + \underbrace{\int_0^t f_1(s,t,\eps) \nabla_{x^1} U(x^1_s,x^2_s) ds}_{=:I_1(t,\eps)} + \underbrace{\int_0^t f_2(s,t,\eps) dB_s}_{=:I_2(t,\eps)}.
\end{equation*}
Here 
$B_t$ is a standard Brownian motion, $V,f_0,f_1,f_2$ are sufficiently smooth functions with appropriate growth conditions to ensure well-posedness of solutions and $|\nabla_{x^1} U|<C_U$ (recall~\eqref{ass:Vbound}).

Using Young's inequality along with the  It\^o isometry for $I_2$ we find
\begin{align}\label{eq:genPathBound}
    \E\Bigl[ |x^1_t|^2\Bigr] 
    \leq 3 \biggl\{ \E\Bigl[|f_0(t,x_0,\eps)|^2\Bigr] + C_U^2 \biggl(\int_0^t|f_1(s,t,\eps)| ds\biggr)^2+  \int_0^t \bigl|f_2(s,t,\eps)\bigr|^2ds \biggr\} 
\end{align}
where we have used $|\nabla_{x^1} U|\leq C_U$ to bound the $I_1$ term. 
\subsection{Proof strategy for the unconstrained variables and auxiliary results}
In the following we study the behaviour of the unconstrained variables, which often contain   $(Q^{2}_t,P^{2}_t)$ (recall~\eqref{eq:abc-slow}) 
which solve 
\begin{align}\label{eq:Q2P2}
\begin{aligned}
    dQ^{2}_t &= P^{2}_t dt    \\
    dP^{2}_t &= -\nabla_{q^2} V(Q^{1}_t,Q^{2}_t) dt  - \gamma P^{2}_t dt + \sqrt{2\gamma\beta^{-1}} \,dW^2_t,
\end{aligned}
\end{align}
where $\nabla_{q^2}$ is the gradient with respect to $q^2\in \R^{d-k}$. Recall that the potential $V$ is assumed to be of the form $V(q) = \frac{\alpha}{2}|q|^2 + U(q)$ (see \eqref{eq:Valpha}), where $U$ satisfies \eqref{ass:VLip}-\eqref{ass:Vbound}. In the following two lemmas we derive bounds on $\|e^{Kt}\|_F^2$, where $K \in \R^{(d-k) \times (d-k)}$ is the matrix which captures the linear terms of the dynamics. These bounds will be used in Theorem \ref{cor:gronwall} below to derive estimates for the unconstrained variables, which turn out to crucially depend on $\alpha$ (recall the particular form~\eqref{eq:Valpha} of the potential). As we shall see later, in case $\alpha =0$, (i.e.\ $V$ does not admit a quadratic part in $q$) the error estimate for unconstrained variables (when dealing with stiff potentials) will grow exponentially in $T$ as opposed to linear growth in $T$ if $\alpha>0$ (i.e.\ $V$ admits a quadratic term, i.e. $\alpha>0$).

\begin{lemma}[Matrix exponential I]\label{lem:matrixexp}
Let $K \coloneqq \begin{pmatrix}
        0_{(d-k) \times (d-k)} & I_{(d-k) \times (d-k)} \\ 0_{(d-k) \times (d-k)} & - \gamma  I_{(d-k) \times (d-k)} 
    \end{pmatrix}$ with $\gamma>0$. We then have 
    \begin{align}\label{eq:matrixexp}
        e^{Kt} = \begin{pmatrix}
            I & -\gamma^{-1} e^{-\gamma I t} + \gamma^{-1}I \\ 0 & e^{-\gamma It}
        \end{pmatrix},
    \end{align}
    where the identity matrices above are all $d-k$ dimensional. 
    Furthermore, for any $t\geq 0$ we have the bound 
    \begin{align*}
        \bigl\|e^{Kt}\bigr\|^2_F \leq (d-k) (2+\gamma^{-2}).
    \end{align*}
\end{lemma}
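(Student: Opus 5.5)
The plan is to compute the exponential directly from the block structure and then bound its Frobenius norm entry by entry. Write $K = \begin{pmatrix} 0 & I \\ 0 & -\gamma I\end{pmatrix}$, where every block is a multiple of the identity on $\R^{d-k}$. These blocks commute, so the problem reduces to the scalar $2\times 2$ matrix $k=\begin{pmatrix}0&1\\0&-\gamma\end{pmatrix}$ tensored with $I_{d-k}$. Hence $e^{Kt}=e^{kt}\otimes I_{d-k}$.

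To compute $e^{kt}$ I would use a power-series argument. An induction gives $k^n=\begin{pmatrix}0&(-\gamma)^{n-1}\\0&(-\gamma)^n\end{pmatrix}$ for $n\ge 1$. Summing the series, the diagonal entries are $1$ and $e^{-\gamma t}$. The off-diagonal $(1,2)$ entry is
\[
\sum_{n\ge1}\frac{t^n(-\gamma)^{n-1}}{n!}=\frac{1-e^{-\gamma t}}{\gamma}.
\]
This is exactly \eqref{eq:matrixexp}. As an alternative check, one can verify that the right-hand side of \eqref{eq:matrixexp} equals $I$ at $t=0$ and satisfies $\frac{d}{dt}M(t)=KM(t)$. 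Differentiating gives $\begin{pmatrix}0& e^{-\gamma t}I\\0&-\gamma e^{-\gamma t}I\end{pmatrix}$, which matches $K M(t)$. Uniqueness of solutions to this linear ODE then yields the formula.

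For the Frobenius bound, each block is a scalar multiple of $I_{d-k}$, so
\[
\|e^{Kt}\|_F^2=(d-k)\Bigl(1+\gamma^{-2}(1-e^{-\gamma t})^2+e^{-2\gamma t}\Bigr).
\]
For $t\ge0$ we have $0\le e^{-\gamma t}\le 1$, so $(1-e^{-\gamma t})^2\le1$ and $e^{-2\gamma t}\le 1$. This gives the bound $(d-k)(2+\gamma^{-2})$.

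There is no real obstacle here. The only point needing care is the bookkeeping that reduces the block computation to the scalar case, namely the commutativity or Kronecker-product argument.
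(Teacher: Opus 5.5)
Your proposal is correct and follows the same route as the paper: you expand the matrix exponential, where your Kronecker/power-series bookkeeping and ODE check just make the paper's ``straightforward expansion'' explicit, and then you bound the Frobenius norm term by term. Your exact identity $\|e^{Kt}\|_F^2=(d-k)\bigl(1+\gamma^{-2}(1-e^{-\gamma t})^2+e^{-2\gamma t}\bigr)$ is cleaner than the paper's trace computation, whose intermediate line writes the cross term as $-2\gamma^{-2}e^{-2\gamma t}$ instead of $-2\gamma^{-2}e^{-\gamma t}$; the final bound $(d-k)(2+\gamma^{-2})$ is unaffected.
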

\begin{proof}
The proof of~\eqref{eq:matrixexp} follows by a straightforward expansion of the matrix exponential. Using~\eqref{eq:matrixexp} we calculate
\begin{align*}
     \bigl\| e^{ K t} \bigr\|_F^2 &= \tr\bigl( e^{Kt} (e^{Kt})^\top\bigr)
     = \tr \begin{pmatrix}
         I + \gamma^{-2} I - 2\gamma^{-2} e^{-2\gamma I t}  + \gamma^{-2} e^{-2\gamma I t} & \cdot \\
         \cdot & e^{-2\gamma I t} 
     \end{pmatrix}\\
     & = \tr\bigl( I + \gamma^{-2}(I - e^{-2\gamma I t}) \bigr) + \tr\bigl( e^{-2\gamma I t}\bigr) 
     = (1+\gamma^{-2})\tr(I) + (1-\gamma^{-2})\tr(e^{-2\gamma I t})\\
     & = (1+ \gamma^{-2})\tr(I) + (1-\gamma^{-2}) \tr(I)  e^{-2\gamma t} \leq (d-k) \bigl( 2+ \gamma^{-2} \bigr)
\end{align*}
where the final inequality follows since $-\gamma^{-2}e^{-2\gamma t}\leq 0$, $e^{-2\gamma t}\leq 1$ for any $t\geq 0$ and $\tr(I)=d-k$. 
\end{proof}
\begin{lemma}[Matrix exponential II]\label{lem:matrixexp-b} 
Let $K \coloneqq \begin{pmatrix}
        0 & I \\ -\alpha I & - \gamma I 
    \end{pmatrix} \in \R^{2(d-k) \times 2(d-k)}$ with $\alpha,\gamma>0$, and $\theta= \sqrt{\gamma^2 - 4\alpha}$. Then for any $t\geq 0$
    \begin{align*}
        \bigl\|e^{Kt}\bigr\|^2_F \leq C (d-k) e^{-\eta t}, \text{ where } \eta \in (0,\gamma]\,, \  \eta = \begin{cases}
        \gamma - \sqrt{\gamma^2 - 4\alpha}, &\text{ if } \gamma^2 > 4\alpha \\ \gamma , &\text{ if } \gamma^2 < 4\alpha \\ \gamma - \delta , \text{ for any } \delta \in (0,\gamma) &\text{ if } \gamma^2 = 4\alpha .\end{cases} 
    \end{align*}
\end{lemma}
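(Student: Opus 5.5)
The plan is to reduce to a $2\times 2$ block computation and then reuse the explicit matrix exponentials from Appendix~\ref{app:VarConst}. Since $K=\left(\begin{smallmatrix}0&1\\-\alpha&-\gamma\end{smallmatrix}\right)\otimes I_{d-k}$ (after reordering coordinates into pairs $((q)^i,(p)^i)$), we have $e^{Kt}=e^{K_2 t}\otimes I_{d-k}$ with $K_2\coloneqq\left(\begin{smallmatrix}0&1\\-\alpha&-\gamma\end{smallmatrix}\right)$. Hence $\|e^{Kt}\|_F^2=(d-k)\|e^{K_2t}\|_F^2$, and it suffices to prove $\|e^{K_2t}\|_F^2\le Ce^{-\eta t}$. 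The matrix $K_2$ is exactly the matrix $A$ in \eqref{eq:choiceforfuncoupled} with $a=1$, $b=\alpha$, $c=\gamma$, so $\theta=\sqrt{\gamma^2-4\alpha}$ as in \eqref{def:theta}. We then split into the three cases of the statement.

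\emph{Case $\gamma^2>4\alpha$.} Here $\theta\in(0,\gamma)$ is real because $\alpha>0$. Formula \eqref{eq:expAt} gives every entry of $e^{K_2t}$ as a linear combination, with $t$-independent coefficients (finite since $\theta\neq0$ and $c\pm\theta\neq 0$), of $e^{-\frac12(\gamma+\theta)t}$ and $e^{-\frac12(\gamma-\theta)t}$. Both are bounded by $e^{-\frac12(\gamma-\theta)t}$. Squaring gives $\|e^{K_2t}\|_F^2\le Ce^{-(\gamma-\theta)t}$, which is $\eta=\gamma-\sqrt{\gamma^2-4\alpha}$. This lies in $(0,\gamma)$ because $0<\theta<\gamma$.

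\emph{Case $\gamma^2<4\alpha$.} We use Corollary~\ref{cor:sol_underdamped}. Its homogeneous part shows that the entries of $e^{K_2t}$ are $e^{-\frac\gamma2 t}$ times combinations of $\cos(t|\theta|/2)$ and $\sin(t|\theta|/2)$ with coefficients bounded by $1+\gamma/|\theta|+2/|\theta|+2\alpha/|\theta|$. The squared Frobenius norm is therefore at most $Ce^{-\gamma t}$, so $\eta=\gamma$.

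\emph{Case $\gamma^2=4\alpha$.} By \eqref{eq:expA-theta0}, $e^{K_2t}=e^{-\frac\gamma2t}\left(\begin{smallmatrix}1+\frac\gamma2t& t\\-\alpha t&1-\frac\gamma2 t\end{smallmatrix}\right)$, so $\|e^{K_2t}\|_F^2\le C(1+t^2)e^{-\gamma t}$. For any $\delta\in(0,\gamma)$ we absorb the polynomial factor using $\sup_{t\ge0}(1+t^2)e^{-\delta t}=:C_\delta<\infty$. This yields $\le CC_\delta e^{-(\gamma-\delta)t}$, with a constant that depends on $\delta$.

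The only mildly delicate step is this degenerate case, where the Jordan block produces the polynomial factor and forces the loss of $\delta$. The rest is bookkeeping of the coefficients in \eqref{eq:expAt}, including checking that they stay finite, which holds since $\theta\ne0$ and $\alpha>0$.
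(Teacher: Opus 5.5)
Your proof is correct and follows essentially the same route as the paper: you reduce to the $2\times 2$ block via $\|e^{Kt}\|_F^2=(d-k)\|e^{\tilde K t}\|_F^2$, treat $\theta$ real, purely imaginary and zero separately using the explicit exponentials \eqref{eq:expAt}--\eqref{eq:expA-theta0}, and lose $\delta$ only in the Jordan-block case. The one difference is cosmetic: you bound the entries of $e^{\tilde K t}$ directly, whereas the paper expands $\tr\bigl(e^{Kt}(e^{Kt})^*\bigr)$ in terms of $e^{(\lambda_i+\lambda_j^*)t}$ and drops the negative cross terms, so your bookkeeping is if anything slightly cleaner.
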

\begin{proof}
First note that in the proof we consider $\tilde K = \begin{pmatrix}
    0 & 1 \\ - \alpha & - \gamma
\end{pmatrix} \in \R^{2 \times 2}$ instead of $K \in \R^{2(d-k)\times 2(d-k)}$ as given above. We do this since $\| e^{Kt}\|^2_F = (d-k) \|e^{\tilde K t }\|^2_F$, which follows by re-ordering the variables (arrange $Q^i$'s and $P^i$'s together) thereby leading to block-diagonal structure with $\tilde K$ on each of the diagonal blocks. Here onwards $K=\tilde K$.

The proof follows by using~\eqref{eq:expAt}-\eqref{eq:expA-theta0}, which calculates the matrix exponential for $A$ defined in~\eqref{eq:choiceforfuncoupled}; our case follows with the choice $a=1, c= \gamma$. Introduce $\theta = \sqrt{\gamma^2 - 4\alpha}$. The proof consists of three cases: first we deal with non-zero $\theta$, in particular we treat $\theta \in \R$, i.e. $\gamma^2-4\alpha >0$ and the case when $\theta$ is purely complex, i.e. $\gamma^2-4\alpha <0$, separately. Finally, we consider $\theta=0$.

\noindent\textbf{Case $\theta\neq 0$.}
Using the expression for $e^{Kt}$ in ~\eqref{eq:expAt}, and , $\, \lambda_1 = -\frac{1}{2}(\gamma+\theta), \, \lambda_2 = -\frac{1}{2}(\gamma-\theta) $ and $\lambda_1 \lambda_2 = \alpha$ we have
\begin{align*}
    e^{Kt} = \frac{1}{\theta}\begin{pmatrix}
        \lambda_2 e^{\lambda_1 t} - \lambda_1 e^{\lambda_2 t} & e^{\lambda_2 t } - e^{\lambda_1 t} \\ \alpha(e^{\lambda_1 t} - e^{\lambda_2 t}) & \lambda_2 e^{\lambda_2 t} - \lambda_1 e^{\lambda_1 t}
    \end{pmatrix}
\end{align*}
and compute
\begin{align*}
     \bigl\| e^{ K t} \bigr\|_F^2 &= \tr\bigl( e^{Kt} (e^{Kt})^*\bigr)
     = \frac{1}{|\theta|^2} \Bigl\{ \Bigl[e^{(\lambda_1 + \lambda_1^*)t} + e^{(\lambda_2 + \lambda_2^*)t} \Bigr]\left( |\lambda_1|^2 + |\lambda_2|^2 + 1 + \alpha^2 \right)  \\
     &  \quad -  \Bigl[ e^{(\lambda_1 + \lambda_2^*)t} + e^{(\lambda_2 + \lambda_1^*)t} \Bigr] \left( \lambda_1^* \lambda_2 + \lambda_2^* \lambda_1 + 1 + \alpha^2 \right)
    \Bigr\} .
\end{align*}

\begin{itemize}
    \item Now let $\theta > 0, $ i.e. $\gamma^2 - 4\alpha > 0.$ In this case $\lambda_1 + \lambda_2^* = - \gamma = \lambda_1^* + \lambda_2, \ \lambda_1 \lambda_2^* = \lambda_1^* \lambda_2 = \alpha$ and \[0 > \lambda_2 + \lambda_2^* = - \gamma + \theta \geq - \gamma  = \lambda_2 + \lambda_1^* = \lambda_1 + \lambda_2^*  \geq  - \gamma - \theta = \lambda_1 + \lambda_1^*\,.\]
    Using that $|\lambda_1|^2 + |\lambda_2|^2 + 1+ \alpha^2 = \gamma^2 + (1-\alpha)^2$ and omitting the terms with negative sign, we find
    \begin{align*}
        \bigl\| e^{ K t} \bigr\|_F^2 \leq 2e^{(-\gamma + \theta) t} \frac{\gamma^2 + (1-\alpha)^2}{|\theta|^2} \leq C e^{-(\gamma-\theta)t},
    \end{align*}
    where $C>0$. 
    \item Next, consider the case when $\gamma^2-4\alpha <0$, i.e.\ $\theta = i|\theta|$ is purely complex. First, compute $\lambda_1 + \lambda_1^* = \lambda_2 + \lambda_2^* = - \gamma, \ \lambda_1 + \lambda_2^* = - \gamma - 2i|\theta|,\ \lambda_1^* + \lambda_2 = - \gamma + 2i|\theta|, \ \lambda_1 \lambda_2^* = - \alpha + \frac{1}{2}\gamma^2 + \frac{1}{2} i \gamma |\theta|, \ \lambda_1^* \lambda_2 = - \alpha + \frac{1}{2}\gamma^2 - \frac{1}{2} i \gamma |\theta|, \ |\lambda_1|^2 = |\lambda_2|^2 = \alpha $ so that $|\lambda_1|^2 + |\lambda_2|^2 + 1 + \alpha^2 = (1+\alpha)^2$ and $ \lambda_1^* \lambda_2 + \lambda_2^* \lambda_1 + 1 + \alpha^2 = - |\theta|^2 + (1+\alpha)^2.$ This yields that
    \begin{align*}
        \|e^{Kt}\|_F^2 = e^{-\gamma t} \frac{2(1+\alpha)^2}{|\theta|^2} + e^{-(\gamma + 2i|\theta|)t } + e^{-(\gamma - 2i|\theta|)t } \leq 2 e^{-\gamma t} \left(1+ \frac{(1+\alpha)^2}{|\theta|^2}  \right)
    \end{align*}
\end{itemize}

\noindent\textbf{Case $\theta=0$.}
In this case according to \eqref{eq:expA-theta0} we have
\begin{align*}
    e^{Kt} = e^{-\frac{\gamma}{2}t}\begin{pmatrix}
        1 + \frac{\gamma}{2}t & t \\ -\alpha t & 1-\frac{\gamma}{2} t 
    \end{pmatrix}
\end{align*}
and therefore
\begin{align*}
    \| e^{Kt}\|_F^2 = e^{-\gamma t}\biggl[2+ \left(\frac{\gamma^2}{2} + 1 + \alpha^2\right)t^2\biggr] \leq C_\delta e^{-(\gamma-\delta)t},
\end{align*}
where $\delta>0$ is fixed and  $C_\delta >0$ depends only on $\delta.$ The last inequality uses similar arguments as in the proof of  \cite[Corollary 3.7]{HartmannNeureitherSharma25a}. 
 
Summarising the three cases above, we yield the claimed result. 
\end{proof}


In the following we combine the results of Lemma \ref{lem:matrixexp} and \ref{lem:matrixexp-b} to be able to compute explicit convergence rates for $Q^2_t, P^2_t$ defined in \eqref{eq:Q2P2} to the limit dynamics $q_t,p_t$ defined in~\eqref{eq:limit-gen}.

\begin{theorem} \label{cor:gronwall}
    Given $\eps>0$, let $(Q^2_t,P^2_t)$  solve~\eqref{eq:Q2P2} with initial datum $(Q^2_0,P^2_0)$, and let  
    $(q_t,p_t)$ solve~\eqref{eq:limit-gen} with initial datum $(q_0,p_0)$. Recall the definition of the potential $V$ from~\eqref{eq:Valpha} and the parameter $\alpha\geq0$.  
    We have the $L^2$-estimate
    \begin{align}\label{eq:gronwall-L2}
        \E \biggl[ \sup\limits_{t \in [r,T]} \biggl| 
        \begin{pmatrix}
           q_t - Q^{2}_t \\ p_t - P^{2}_t
        \end{pmatrix} \biggr|^2 \biggr] &\leq 
        \begin{dcases}
        C e^{\tilde C T}  \biggl( \left| \begin{pmatrix}
            q_0 - Q^2_0 \\ p_0 - P^2_0
        \end{pmatrix} \right|^2 + T\int_0^T  \E\bigl[|\hat q - Q^{1}_s|^2 \bigr]ds \biggr) 
        \ \ &\text{if $\alpha=0$},\\
        \tilde C \biggl(e^{-\eta r} \biggl|\begin{pmatrix}
           q_0 - Q^{2}_0 \\ p_0 - P^{2}_0
        \end{pmatrix} \biggr|^2+ \eta^{-1} \int_0^T  \E\left[|  \hat q - Q^{1}_s |^2\right] ds \biggr) &\text{if $\alpha>0$}. 
        \end{dcases}
    \end{align}
    Additionally  for $\alpha >0$  we have $L^1$ estimates 
    \begin{align} \label{eq:grownall-alphapos}
   \E\biggl[ \sup\limits_{t \in [r,T]} \biggl| \begin{pmatrix}
        q_t - Q^2_t \\ p_t - P^2_t    \end{pmatrix} \biggr| \biggr] &\leq C  \biggl(e^{- \frac{\eta}{2} r} \biggl| \begin{pmatrix}
            q_0 - Q^2_0 \\ p_0 - P^2_0
        \end{pmatrix} \biggr| +  \int_0^T e^{-\frac{\eta}{2}(T-s)} \E\left[|\hat q - Q^{1}_s|\right] ds \biggr) \,,
    \end{align}
Here $\eta\in (0,\gamma]$ defined in Lemma~\ref{lem:matrixexp-b} and $C, \tilde C>0$ is independent of $t,\eps$. 
\end{theorem}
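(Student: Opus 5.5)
Set $Z_t \coloneqq (q_t-Q^2_t,\,p_t-P^2_t)^\top$. Subtracting \eqref{eq:Q2P2} from \eqref{eq:limit-gen} and driving both with the same Brownian motion $W^2$, the noise cancels. The difference therefore solves a random ODE,
\[
\dot Z_t = K Z_t - \begin{pmatrix} 0 \\ \nabla_{q^2}U(\hat q,q_t)-\nabla_{q^2}U(Q^1_t,Q^2_t)\end{pmatrix},
\]
where $K$ is the matrix of Lemma~\ref{lem:matrixexp} if $\alpha=0$ and of Lemma~\ref{lem:matrixexp-b} if $\alpha>0$. For $\alpha>0$ the term $\alpha q^2$ goes into $K$; the $\alpha q^1$ part only appears in $\nabla_{q^2}V$ through $q^2$, so it does not enter. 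By Proposition~\ref{prop:varofconst},
\[
Z_t = e^{Kt}Z_0 + \int_0^t e^{K(t-s)} F_s\,ds .
\]
Assumption~\ref{item:assVLip} gives $|F_s|\le L_U\bigl(|\hat q-Q^1_s| + |Z_s|\bigr)$. Everything below is deterministic pathwise; expectations are taken only at the end.

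\textbf{Case $\alpha=0$.} Lemma~\ref{lem:matrixexp} bounds $\|e^{Kt}\|_F$ uniformly in $t$. Squaring the representation and applying Cauchy--Schwarz in time gives
\[
|Z_t|^2 \le C\Bigl(|Z_0|^2 + T\int_0^T|\hat q-Q^1_s|^2ds + T\int_0^t|Z_s|^2ds\Bigr).
\]
Gronwall's inequality then yields $\sup_{t\le T}|Z_t|^2 \le C e^{CT^2}(\cdots)$. To reach the stated rate $e^{\tilde C T}$ instead of $e^{CT^2}$, I would avoid Cauchy--Schwarz on the $Z$ term: first bound $|Z_t|\le C|Z_0|+C\int_0^t|\hat q-Q^1_s|ds + C\int_0^t|Z_s|ds$, apply the linear Gronwall inequality to get $\sup_t|Z_t|\le Ce^{CT}\bigl(|Z_0|+\int_0^T|\hat q-Q^1_s|ds\bigr)$, then square and use Cauchy--Schwarz only on the $Q^1$ integral, which produces the factor $T$. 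Taking the supremum and then the expectation gives \eqref{eq:gronwall-L2}.

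\textbf{Case $\alpha>0$.} Lemma~\ref{lem:matrixexp-b} gives $\|e^{Kt}\|\le Ce^{-\eta t/2}$. The estimate then reads
\[
|Z_t| \le Ce^{-\eta t/2}|Z_0| + CL_U\int_0^t e^{-\eta(t-s)/2}\bigl(|\hat q-Q^1_s|+|Z_s|\bigr)ds .
\]
Here is the main obstacle. A plain Gronwall argument would reintroduce exponential growth unless the $Z$ feedback is absorbed by the decay. I would first try a Gronwall inequality for convolution kernels (a comparison argument on $u_t=e^{\eta t/2}|Z_t|$), which works if $CL_U<\eta/2$. Failing that, I would look for structure: $\nabla_{q^2}U$ enters only the momentum equation, so a weighted or Lyapunov norm adapted to $K$ can exploit $U$ being a perturbation of the quadratic confinement, giving a contraction rate $\eta'>0$. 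I expect this smallness or absorption step to be where the real work, or an implicit condition, lies.

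Granting a resolvent bound of the form $|Z_t|\le Ce^{-\eta t/2}|Z_0| + C\int_0^te^{-\eta(t-s)/2}|\hat q-Q^1_s|ds$, the supremum over $t\in[r,T]$ gives \eqref{eq:grownall-alphapos} after taking expectations; the kernel is bounded by $1$, so this covers the stated form. For the $L^2$ version, square and apply Cauchy--Schwarz with weight $e^{-\eta(t-s)/2}$: since $\int_0^\infty e^{-\eta u/2}du=2/\eta$, one gets $\bigl(\int_0^t e^{-\eta(t-s)/2}g_s\,ds\bigr)^2\le 2\eta^{-1}\int_0^T g_s^2\,ds$. This gives the $\eta^{-1}\int_0^T\E|\hat q-Q^1_s|^2ds$ term, and the initial term carries $e^{-\eta r}$.
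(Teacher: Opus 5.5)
\textbf{The case $\alpha=0$.} Your argument is correct and is the paper's argument. You apply Gronwall to the unsquared inequality, with the nondecreasing $\delta(t)=C\bigl(|Z_0|+L_U\int_0^t|\hat q-Q^1_s|\,ds\bigr)$. Only afterwards do you square, using Cauchy--Schwarz on the $Q^1$-integral alone to produce the factor $T$.

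\textbf{The case $\alpha>0$: a genuine gap.} You grant the resolvent bound $|Z_t|\le Ce^{-\eta t/2}|Z_0|+C\int_0^te^{-\eta(t-s)/2}|\hat q-Q^1_s|\,ds$, and it cannot be supplied under the stated hypotheses.

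The paper gets past this point by invoking Gronwall with $\nu(s)=e^{-\frac{\eta}{2}(t-s)}$ and bounding $\exp\int_0^t\nu(s)\,ds\le e^{2/\eta}$. This kernel depends on $t$, so that step is not Gronwall's lemma. Nor is its $\delta(t)$ monotone, since it is a convolution against a decaying kernel.

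Your comparison on $e^{\eta t/2}|Z_t|$ is the correct computation. It gives $e^{\eta t/2}|Z_t|\le e^{\eta t/2}a_t+CL_U\int_0^te^{\eta s/2}|Z_s|\,ds$. Hence you get decay only at rate $\eta/2-CL_U$, and no uniform bound at all when $CL_U\ge\eta/2$.

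No cleverer norm can restore rate $\eta/2$, because the statement is false as written. Take $V(q)=\frac{\alpha}{2}|q^1|^2$, i.e.\ $U(q)=-\frac{\alpha}{2}|q^2|^2$.
\begin{itemize}
\item This $V$ is $C^2$, satisfies $V\ge0$, and meets Assumptions~\ref{item:assVLip}--\ref{item:assVbound}.
\item Then $\nabla_{q^2}V\equiv0$. So for any $Q^1$, the difference $Z=(z_q,z_p)$ solves $\dot z_q=z_p$, $\dot z_p=-\gamma z_p$.
\item Hence $Z_0=(z,0)$ gives $Z_t\equiv Z_0$ for all $t$.
\item Choose $Q^1\equiv\hat q$. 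Both $\alpha>0$ estimates would then force $|z|\le Ce^{-\eta r/2}|z|$ for every $r$, with $C$ independent of $r$, which is false. (In the paper's applications the $Q^1$-term is $O(\sqrt{\eps})$, and the same contradiction follows as $\eps\to0$.)
\end{itemize}
The decay rate of the synchronously coupled difference is governed by the whole $q^2$-part of $V$, not by $\alpha$. A correct version needs an extra hypothesis. One option is $CL_U<\eta/2$, with $\eta/2$ replaced by $\eta/2-CL_U$; another is a contractivity assumption.

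\textbf{The last step of your $L^1$ bound.} Even granting the resolvent bound, your final step does not reach \eqref{eq:grownall-alphapos}. For $g_s\ge0$, $\sup_{t\in[r,T]}\int_0^te^{-\frac{\eta}{2}(t-s)}g_s\,ds$ is not controlled by its value at $t=T$. Take $g=\mathds{1}_{[0,1]}$, $r=1$ and let $T\to\infty$. Bounding the kernel by $1$ yields $\int_0^T\E|\hat q-Q^1_s|\,ds$, which is larger than the stated right-hand side, not smaller. The paper's appeal to Fubini at this point has the same defect.
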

\begin{proof}
We will proceed in two steps. In the first step, we will show that 
    \begin{equation*}
        \biggl|
        \begin{pmatrix}
           q_t - Q^{2}_t \\ p_t - P^{2}_t
        \end{pmatrix} \biggr| \leq \bigl\|e^{Kt}\bigr\|_F\left|\begin{pmatrix}
           q_0 - Q^{2}_0 \\ p_0 - P^{2}_0
        \end{pmatrix} \right|+L_U \int_0^t \bigl\|e^{K(t-s)}\bigr\|_F  |  \hat q - Q^{1}_s | ds + \int_0^t \bigl\|e^{K(t-s)}\bigr\|_F  \left|
        \begin{pmatrix}
           q_s - Q^{2}_s \\ p_s - P^{2}_s
        \end{pmatrix} \right| ds,
    \end{equation*}    
    where $L_U$ is the Lipschitz constant for $\nabla U(q)$ (recall~\eqref{ass:VLip}) and $K$ characterises the linear part of the dynamics (see below).
    In the second step, we will use the estimates for $\|e^{Kt}\|_F^2 $ derived in Lemma~\ref{lem:matrixexp} and Lemma~\ref{lem:matrixexp-b} for the case $\alpha = 0$ and $\alpha >0$ respectively, which together with the Gronwall's inequality yield the final result. 
    
    \noindent\textbf{Step 1.} 
    Using $K = \begin{pmatrix}
        0 & I \\ -\alpha I  & - \gamma  I\end{pmatrix}$, where $0,I\in \R^{(d-k) \times (d-k)}$, and  Proposition~\ref{prop:varofconst} we find 
    \begin{align*}
        \begin{pmatrix}
            q_t \\ p_t
        \end{pmatrix} &= e^{Kt}\begin{pmatrix}
            q_0 \\ p_0        \end{pmatrix} + \int_0^t e^{K(t-s)}\begin{pmatrix}
           0 \\ \nabla_{q^2} U(\hat q,q_s)
        \end{pmatrix} ds  + \sqrt{2 \gamma\beta^{-1} }\int_0^t e^{K(t-s)}
        \begin{pmatrix}
            0 \\ dW^2_s
        \end{pmatrix} , \\ 
        \begin{pmatrix}
            Q^{2}_t \\ P^{2}_t
        \end{pmatrix} &= e^{Kt}\begin{pmatrix}
            Q^{2}_0 \\ P^{2}_0        
            \end{pmatrix} + \int_0^t e^{K(t-s)}\begin{pmatrix}
           0 \\ \nabla_{q^2} U(Q^{1}_s,Q^{2}_s)
        \end{pmatrix} ds  + \sqrt{2 \gamma\beta^{-1} }\int_0^t e^{K(t-s)}\begin{pmatrix}
           0 \\ dW^2_s
        \end{pmatrix} ,
    \end{align*}
    which leads to 
    \begin{align}
        \left|
        \begin{pmatrix}
           q_t - Q^{2}_t \\ p_t - P^{2}_t
        \end{pmatrix} \right| 
        &\leq  
        \left|e^{Kt}\begin{pmatrix}
           q_0 - Q^{2}_0 \\ p_0 - P^{2}_0
        \end{pmatrix} \right|+
        \int_0^t \left|e^{K(t-s)}\begin{pmatrix}
           0 \notag \\ \nabla_{q^2} U(\hat q,q_s)  - \nabla_{q^2} U(Q^{1}_s,Q^{2}_s) 
        \end{pmatrix}  \right|ds 
        \\ 
        & \leq  \bigl\|e^{Kt}\bigr\|_F\left|\begin{pmatrix}
           q_0 - Q^{2}_0 \\ p_0 - P^{2}_0
        \end{pmatrix} \right|+L_U \int_0^t \bigl\|e^{K(t-s)}\bigr\|_F \, \bigl|(\hat q,q_s)^\top - (Q^{1}_s,Q^{2}_s)^\top\bigr| ds \notag \\
        & \leq  \bigl\|e^{Kt}\bigr\|_F\left|\begin{pmatrix}
           q_0 - Q^{2}_0 \\ p_0 - P^{2}_0
        \end{pmatrix} \right|+L_U \int_0^t \bigl\|e^{K(t-s)}\bigr\|_F  |  \hat q - Q^{1}_s | ds + \int_0^t \bigl\|e^{K(t-s)}\bigr\|_F  \left|
        \begin{pmatrix}
           q_s - Q^{2}_s \\ p_s - P^{2}_s
        \end{pmatrix} \right| ds , \label{eq:step1proofgron}
    \end{align} 
    where we used the Lipschitz continuity of $\nabla U$ to arrive at the second inequality and the third inequality
    follows by adding $p_s - P^2_s$. 

\noindent\textbf{Step 2a ($\alpha = 0$).} 
By Lemma \ref{lem:matrixexp} we have that $\|e^{Kt}\|_F^2 \leq C$, where $C > 0$ depends on the dimensions $d,k$ and the friction coefficient $\gamma$, but not on $\eps,t$. Therefore, in this case~\eqref{eq:step1proofgron} leads to
\begin{align*}
        \left|
        \begin{pmatrix}
           q_t - Q^{2}_t \\ p_t - P^{2}_t
        \end{pmatrix} \right| 
         \leq  C \left(\left|\begin{pmatrix}
           q_0 - Q^{2}_0 \\ p_0 - P^{2}_0
        \end{pmatrix} \right|+L_U \int_0^t   |  \hat q - Q^{1}_s | ds + \int_0^t   \left|
        \begin{pmatrix}
           q_s - Q^{2}_s \\ p_s - P^{2}_s
        \end{pmatrix} \right| ds \right),
    \end{align*}     
   We set 
   \[\delta(t) \coloneqq C \left(\left|\begin{pmatrix}
           q_0 - Q^{2}_0 \\ p_0 - P^{2}_0
        \end{pmatrix} \right|+L_U \int_0^t   |  \hat q - Q^{1}_s | ds \right) 
        \]
        and note that $t\mapsto \delta(t)$ is monotonically increasing. Then by Gronwall's lemma we arrive at 
        \begin{align*}
            \left|
        \begin{pmatrix}
           q_t - Q^{2}_t \\ p_t - P^{2}_t
        \end{pmatrix} \right| 
         \leq  \delta(t)e^{Ct} = C e^{Ct}\biggl(\left|\begin{pmatrix}
           q_0 - Q^{2}_0 \\ p_0 - P^{2}_0
        \end{pmatrix} \right|+L_U \int_0^t   |  \hat q - Q^{1}_s | ds\biggr).
        \end{align*}
        Squaring this inequality and applying Young's inequality to the right-hand side we find
        \begin{align*}
            \left|
        \begin{pmatrix}
           q_t - Q^{2}_t \\ p_t - P^{2}_t
        \end{pmatrix} \right|^2 
         \leq 2C e^{2Ct}\biggl(\left|\begin{pmatrix}
           q_0 - Q^{2}_0 \\ p_0 - P^{2}_0
        \end{pmatrix} \right|^2+L_U^2 t \int_0^t   |  \hat q - Q^{1}_s |^2 ds\biggr)
        \end{align*}
        where we have used the Cauchy Schwarz inequality to bound the square of the time-integral on the right-hand side. 
        Finally, taking the supremum over $t \in [0,T]$, followed by the expectation yields 
        the required identity~\eqref{eq:gronwall-L2}. 

        \noindent\textbf{Step 2b ($\alpha > 0$).} Lemma~\ref{lem:matrixexp-b} states that $\|e^{Kt} \|^2_F \leq C e^{-\eta t}$ for $\alpha >0$, where 
        $\eta\in(0,\gamma]$ and $C>0$ does not depend on $t,\eps$.
        Then for $t\geq r\geq 0$,~\eqref{eq:step1proofgron} leads to
        \begin{align*}
        \left|
        \begin{pmatrix}
           q_t - Q^{2}_t \\ p_t - P^{2}_t
        \end{pmatrix} \right| 
        &\leq  
         C \biggl( e^{-\frac{\eta}{2} r } \left|\begin{pmatrix}
           q_0 - Q^{2}_0 \\ p_0 - P^{2}_0
        \end{pmatrix} \right|+L_U \int_0^t e^{-\frac{\eta}{2} (t-s)} |  \hat q - Q^{1}_s | ds + \int_0^t e^{-\frac{\eta}{2}(t-s)} \left|
        \begin{pmatrix}
           q_s - Q^{2}_s \\ p_s - P^{2}_s
        \end{pmatrix} \right| ds \biggr).
    \end{align*} 
    We set 
    \begin{align*}
        \delta(t) = C \biggl( e^{-\frac{\eta}{2} r } \left|\begin{pmatrix}
           q_0 - Q^{2}_0 \\ p_0 - P^{2}_0
        \end{pmatrix} \right|+L_U \int_0^t e^{-\frac{\eta}{2} (t-s)} |  \hat q - Q^{1}_s | ds \biggr) \text{ and } \nu(s) = e^{-\frac{\eta}{2}(t-s)}
    \end{align*}
    and note that $t\mapsto \delta(t)$ is monotonically increasing. With $\int_0^t e^{-\frac{\eta}{2}(t-s)} ds = 2\eta^{-1} (1- e^{-\frac{\eta}{2} t})\leq 2\eta^{-1}$ and using Gronwall's inequality we find
     \begin{align}\label{eq:L1-gronwall}
            \left|
        \begin{pmatrix}
           q_t - Q^{2}_t \\ p_t - P^{2}_t
        \end{pmatrix} \right| 
         \leq  \delta(t)e^{\int_0^t \nu(s) ds} \leq C e^{2\eta^{-1}}\biggl(e^{-\frac{\eta}{2} r}\left|\begin{pmatrix}
           q_0 - Q^{2}_0 \\ p_0 - P^{2}_0
        \end{pmatrix} \right|+L_U \int_0^t e^{-\frac{\eta}{2}(t-s)}  |  \hat q - Q^{1}_s | ds\biggr).
        \end{align}
        Taking the supremum over $t \in [r,T]$, followed by the expectation, the required $L^1$-estimate~\eqref{eq:grownall-alphapos} follows by using Fubini's Theorem. The $L^2$-estimate follows by squaring the above inequality, which using Young's and Cauchy-Schwarz inequality yields that 
        \begin{align*}
            \left|
        \begin{pmatrix}
           q_t - Q^{2}_t \\ p_t - P^{2}_t
        \end{pmatrix} \right|^2 
         \leq C \biggl(e^{-\eta r}\left|\begin{pmatrix}
           q_0 - Q^{2}_0 \\ p_0 - P^{2}_0
        \end{pmatrix} \right|^2+L_U^2 \int_0^t e^{-\eta(t-s)} ds \int_0^t |  \hat q - Q^{1}_s |^2 ds\biggr).
        \end{align*}
        Computing the exponential integral term, taking the supremum of $t \in [r,T]$ and finally taking the expectation (and using Fubini) yields the result \eqref{eq:gronwall-L2}. 
        Let us mention the following issue at this point: squaring ~\eqref{eq:L1-gronwall} and taking the supremum for $t \in [0,T]$ suggests we can also get a uniform in time estimate in $L^2$. This does not work since taking the expectation and squaring the integral term doesn't commute.
\end{proof}

\subsection{Spatially confined Langevin dynamics}

In this section we study the spatially confined Langevin dynamics.

\begin{proof}[Proof of Proposition~\ref{prop:SpatCons}.] \label{proof:spat}
Recall the spatially-confined Langevin dynamics~\eqref{eq:abc-slow},\eqref{eq:intro-Spat-Cons-Fast}  
with $Q^{1}_t,P^{1}_t\in\R^k$ and $Q^{2}_t,P^{2}_t \in \R^{d-k}$. 
Using the notation $Q^{1}_t = \bigl( (Q^{1}_t)^1,\ldots,(Q^{1}_t)^k\bigr)$, $P^{1}_t = \bigl( (P^{1}_t)^1,\ldots,(P^{1}_t)^k\bigr)$ for the coordinates of $Q^{1}_t \in\R^k$, $P^{1}_t\in\R^k$, it is easily checked that each pair $((Q^{1}_t)^i,(P^{1}_t)^i)$ in $\R^2$ for $i\in\{1,\ldots,k\}$ satisfies the auxiliary Langevin form~\eqref{eq:lemsollangevin} where $\tilde q_t \in \R^{d-1}$ are the remaining spatial variables and the constants are given by 
\begin{equation*}
    a= 1, \ \ b = \alpha +
    \frac{1}{\eps}, \ \ c=\gamma, \ \ \tilde c= \sqrt{2\gamma\beta^{-1}}.
\end{equation*}
In the following we present the proof for the case that $V(q)$ does not have a quadratic term in $q^1$, i.e.\ $\alpha=0$ in~\eqref{eq:Valpha}. We point out that the case $\alpha>0$ follows similarly, as the only difference in the parameters above is that $b$ changes  which will not change the final results, in particular since $\theta = \sqrt{\gamma^2 - \frac{4(\eps+\alpha)}{\eps}}$ has the same scaling behaviour as $\theta = \sqrt{\gamma^2 - 4\eps^{-1}}$ in the presentation below. The only other difference in the proof $\alpha>0$ would be that $\nabla V$ is replaced by $\nabla U$ in the following. 

Note that for $\eps$ small enough (specifically $\eps<4\gamma^{-2}$), $\theta =\sqrt{c^2-4ab}$ is imaginary and the solution is given by Corollary~\ref{cor:sol_underdamped}. Using $\theta = i|\theta| = i\sqrt{\frac{4}{\eps} -\gamma^2}$ and collecting these solutions in vectorial form we find    \begin{subequations}
\begin{align}
\begin{split}\label{eq:Q1-sol}
    \begin{aligned}
    Q^{1}_t &= \Bigl[\cos\Bigl(t\mfrac{|\theta|}{2}\Bigr)+ \mfrac{\gamma}{|\theta|} \sin\Bigl(t\mfrac{|\theta|}{2}\Bigr) \Bigr]e^{-\frac{\gamma}{2}t}Q^{1}_0
    + \mfrac{2}{|\theta|} \sin\Bigl(t\mfrac{|\theta|}{2}\Bigr)e^{-\frac\gamma2 t}P^{1}_0 \\
    &\qquad -\mfrac{2}{|\theta|}\int_0^t  \sin\Bigl( (t-s)\mfrac{|\theta|}{2}\Bigr) e^{-\frac{\gamma}{2}(t-s)}\nabla_{q^1} V(Q_s)\, ds + \mfrac{2\sqrt{2\gamma\beta^{-1}}}{|\theta|} \int_0^t  \sin\Bigl((t-s) \mfrac{|\theta|}{2}\Bigr) e^{-\frac{\gamma}{2}(t-s)} dW^1_s,
    \end{aligned}
\end{split}\\
\begin{split}\label{eq:P1-sol}
    \begin{aligned}
    P^{1}_t &= -\mfrac{2}{\eps|\theta|} \sin\Bigl(t\mfrac{|\theta|}{2}\Bigr)e^{-\frac{\gamma}{2}t}Q^{1}_0+ \Bigl[ \cos\Bigl(t\mfrac{|\theta|}{2}\Bigr) - \frac{\gamma}{|\theta|}\sin\Bigl(t \mfrac{|\theta|}{2}\Bigr) \Bigr]e^{-\frac{\gamma}{2}t}P^{1}_0 \\
    &\qquad - \int_0^t \Bigl[ \cos\Bigl((t-s)\mfrac{|\theta|}{2}\Bigr) -\mfrac{\gamma}{|\theta|} \sin\Bigl((t-s)\mfrac{|\theta|}{2}\Bigr) \Bigr] \nabla_{q^1} V(Q_s) e^{-\frac{\gamma}{2}(t-s)}ds \\ 
    &\qquad + \sqrt{2\gamma\beta^{-1}} \int_0^t \Bigl[ \cos\Bigl((t-s)\mfrac{|\theta|}{2}\Bigr) -\mfrac{\gamma}{|\theta|} \sin\Bigl((t-s)\mfrac{|\theta|}{2}\Bigr) \Bigr] e^{-\frac{\gamma}{2}(t-s)}dW^1_s.
    \end{aligned}
\end{split}
\end{align}
\end{subequations}
We now prove part~\ref{item:spat-Q1} of Proposition~\ref{prop:SpatCons}. Following the procedure outlined at the start of this section (recall~\eqref{eq:genPathBound}), which involves using Young's inequality, It\^o isometry, and the bound $|\nabla_{q^1} V|=|\nabla_{q^1} U|\leq C_U$ (recall~\eqref{ass:Vbound} and note that $\alpha=0$), we find for any $t\geq 0$ 
\begin{align}
    \E\Bigl[&|Q_t^{1}|^2\Bigr] \leq C \biggl\{ \E\Bigl[ \Bigl|\Bigl[\cos\Bigl(t\mfrac{|\theta|}{2}\Bigr)+ \mfrac{\gamma}{|\theta|} \sin\Bigl(t\mfrac{|\theta|}{2}\Bigr) \Bigr]e^{-\frac{\gamma}{2}t}Q^{1}_0
    + \mfrac{2}{|\theta|} \sin\Bigl(t\mfrac{|\theta|}{2}\Bigr)e^{-\frac\gamma2 t}P^{1}_0\Bigr|^2 \Bigr]\notag\\ 
    & \quad + \biggl(\mfrac{1}{|\theta|} \int_0^t \Bigl|\sin\Bigl( (t-s)\mfrac{|\theta|}{2}\Bigr)\Bigr| e^{-\frac{\gamma}{2}(t-s)}ds\biggr)^2 
    + \frac{\beta^{-1}\gamma}{|\theta|^2} \int_0^t \Bigl|\sin\Bigl( (t-s)\mfrac{|\theta|}{2}\Bigr) e^{-\frac{\gamma}{2}(t-s)}\Bigr|^2 ds
    \biggr\} \notag\\
     &\leq C \biggl\{ e^{-\gamma r}\Bigl(1+ \mfrac{\gamma}{|\theta|}\Bigr)^2 \E\bigl[|Q_0^{1}|^2\bigr] + \mfrac{e^{-\gamma r}}{|\theta|^2}  \E\bigl[|P_0^{1}|^2\bigr] 
     + \mfrac{1}{|\theta|^2} \biggr\}, \label{eq:Spat-Q1-Aux1}
\end{align}
where the bound on the initial data terms follows by using Young's inequality and the last two integrals were bounded using $|\sin(\cdot)|\leq 1$ and explicitly computing the integrals, which gives
\begin{align*}
    &\biggl(\mfrac{1}{|\theta|} \int_0^t \Bigl|\sin\Bigl( (t-s)\mfrac{|\theta|}{2}\Bigr)\Bigr| e^{-\frac{\gamma}{2}(t-s)}ds\biggr)^2 
    + \mfrac{\beta^{-1}\gamma}{|\theta|^2} \int_0^t \Bigl|\sin\Bigl( (t-s)\mfrac{|\theta|}{2}\Bigr) e^{-\frac{\gamma}{2}(t-s)}\Bigr|^2 ds \\
    &\leq \biggl(\mfrac{1}{|\theta|} \int_0^t e^{-\frac{\gamma}{2}(t-s)}ds\biggr)^2 
    + \frac{\beta^{-1}\gamma}{|\theta|^2} \int_0^t  e^{-\gamma(t-s)} ds  \leq C \biggl\{ \mfrac{1}{|\theta|^2\gamma^2} (1-e^{-\frac{\gamma}{2}t})^2 + \mfrac{1}{|\theta|^2}(1-e^{-\gamma t})\biggr\} \leq \mfrac{C}{|\theta|^2}.
\end{align*}

Since $|\theta|= \bigO  (\eps^{-\frac12})$ as $\eps\to 0$, for small enough $\eps>0$ we arrive at
\begin{equation*}
\E\Bigl[|Q_t^{1}|^2\Bigr]   \leq  C \biggl( e^{-\gamma t} \Bigl[\E\bigl[|Q_0^{1}|^2\bigr] + \eps \E\bigl[|P_0^{1}|^2\bigr]\Bigr]+ \eps \biggr) \xrightarrow{\eps\to 0} 0 \ \ \text{if} \ \ \lim_{\eps\to 0}\E[|Q^{1}_0|^2]=0 . 
\end{equation*}
where $C=C(C_U,\gamma,\beta)>0$ is independent of $\eps$ and $t$. 

Next we discuss~\ref{item:spat-P1}.  
Integrating~\eqref{eq:P1-sol} we have
\begin{align}\label{eq:Int-p}
\begin{aligned}
    \int_0^r P_t^{1} dt &= -\mfrac{2}{\eps|\theta|} \int_0^r  \sin\Bigl(t\mfrac{|\theta|}{2}\Bigr)e^{-\frac{\gamma}{2}t}Q^{1}_0 dt+ \int_0^r \Bigl[ \cos\Bigl(t\mfrac{|\theta|}{2}\Bigr) - \frac{\gamma}{|\theta|}\sin\Bigl(t \mfrac{|\theta|}{2}\Bigr) \Bigr]e^{-\frac{\gamma}{2}t}P^{1}_0 dt \\
    &\qquad - \int_0^r  \Bigr\{ \int_0^t \Bigl[ \cos\Bigl((t-s)\mfrac{|\theta|}{2}\Bigr) -\mfrac{\gamma}{|\theta|} \sin\Bigl((t-s)\mfrac{|\theta|}{2}\Bigr) \Bigr]  e^{-\frac{\gamma}{2}t} \Bigr\}\nabla_{q^1} V(Q_s) e^{\frac{\gamma}{2}s}ds dt \\ 
    &\qquad + \sqrt{2\gamma\beta^{-1}} \int_0^r \Bigl\{\int_0^t  \Bigl[ \cos\Bigl((t-s)\mfrac{|\theta|}{2}\Bigr) -\mfrac{\gamma}{|\theta|} \sin\Bigl((t-s)\mfrac{|\theta|}{2}\Bigr) \Bigr] e^{-\frac{\gamma}{2}t}  \Bigr\}e^{\frac{\gamma}{2}s} dW^1_s dt\\ &\eqqcolon I^\eps_1+I^\eps_2+I^\eps_3+I^\eps_4,
\end{aligned}
\end{align}
where $I^\eps_i$ refers to the $i$-th integral on the right-hand side of~\eqref{eq:Int-p}.

We will make use of the following integral identities in our proof 
\begin{align}\label{eq:aux-int}
\begin{aligned}
    \int_u^r \cos\Bigl((t-s)\mfrac{|\theta|}{2} \Bigr) e^{-\frac{\gamma}{2}t} dt &= \mfrac{2}{\gamma ^2+|\theta| ^2}
    \Bigl\{-e^{-\frac{\gamma}{2}r } \Bigl[\gamma  \cos \Bigl((s-r)\mfrac{|\theta|}{2}\Bigr)+|\theta|  \sin \Bigl((s-r)\mfrac{|\theta|}{2}\Bigr)\Bigr]\\ 
    & \qquad\qquad\qquad+ e^{-\frac{\gamma}{2}u}\Bigl[\gamma  \cos \Bigl((s-u)\mfrac{|\theta|}{2}\Bigr)+|\theta|  \sin \Bigl((s -u) \mfrac{|\theta|}{2}\Bigr) \Bigr]\Bigr\} \\
    \int_u^r \sin\Bigl((t-s)\mfrac{|\theta|}{2} \Bigr) e^{-\frac{\gamma}{2}t} dt &=
    \mfrac{2}{\gamma ^2+|\theta|^2} \Bigl\{e^{-\frac{\gamma}{2}r } \Bigl[\gamma  \sin \Bigl((s-r)\mfrac{|\theta|}{2}\Bigr)-|\theta|  \cos \Bigl((s-r)\mfrac{|\theta|}{2}\Bigr)\Bigr]\\ 
    & \qquad\qquad\qquad + e^{-\frac{\gamma}{2}u}\Bigl[|\theta|  \cos \Bigl((s-u)\mfrac{|\theta|}{2}\Bigr) -\gamma  \sin \Bigl((s-u)\mfrac{|\theta|}{2} \Bigr]\Bigr)\Bigr\}
\end{aligned}
\end{align}
We then have
\begin{align*}
    |I_1^\eps + I_2^\eps| &= \biggl|-\frac{4 |\theta|-4 e^{-\frac{\gamma}{2}   r} \bigl[\gamma  \sin \left(r\mfrac{|\theta|}{2}\right)+|\theta| \cos \left(r\mfrac{|\theta|}{2}\right)\bigr]}{\eps|\theta|   (|\theta|^2+\gamma ^2)} Q_0^{1} + \frac{2 }{|\theta|}e^{-\frac{\gamma}{2}  r}\sin \Bigl(r\mfrac{|\theta|}{2}\Bigr) P^{1}_0\biggr| \\
    &\leq \bigO(\eps) |Q_0^{1}| + \bigO(\eps^{\frac{1}{2}}) |P_0^{1}|
\end{align*}

Next consider $I_3^\eps$ where we interchange the integrals to arrive at
\begin{align*}
    I_3^\eps &=   \int_0^r  \Bigr\{ \int_s^r \Bigl[ \cos\Bigl((t-s)\mfrac{|\theta|}{2}\Bigr) -\mfrac{\gamma}{|\theta|} \sin\Bigl((t-s)\mfrac{|\theta|}{2}\Bigr) \Bigr]  e^{-\frac{\gamma}{2}t} dt \Bigr\} \, \nabla_{q^1}  V(Q_s) e^{\frac{\gamma}{2}s} ds  \\
    &=   \int_0^r \Bigl( \mfrac{2}{\gamma ^2+|\theta| ^2}
    \Bigl\{-e^{-\frac{\gamma}{2}r } \Bigl[\gamma  \cos \Bigl((s-r)\mfrac{|\theta|}{2}\Bigr)+|\theta|  \sin \Bigl((s-r)\mfrac{|\theta|}{2}\Bigr)\Bigr] + e^{-\frac{\gamma}{2}s}\gamma  \Bigr\} \\ & \qquad -\mfrac{\gamma}{|\theta|}    \mfrac{2}{\gamma ^2+|\theta|^2} \Bigl\{e^{-\frac{\gamma}{2}r } \Bigl[\gamma  \sin \Bigl((s-r)\mfrac{|\theta|}{2}\Bigr)-|\theta|  \cos \Bigl((s-r)\mfrac{|\theta|}{2}\Bigr)\Bigr] + e^{-\frac{\gamma}{2}s}|\theta| \Bigr\}\Bigr)  
    \, \nabla_{q^1}  V(Q_s) e^{\frac{\gamma}{2}s} ds  \\
    & = \int_0^r \mfrac{2}{\gamma^2 + |\theta|^2} e^{-\frac{\gamma}{2}r} \sin \Bigl((s-r) \mfrac{|\theta|}{2} \Bigr)\left[-|\theta| - \frac{\gamma^2}{|\theta|} \right] \nabla_{q^1} V(Q_s) ds \\
    &= - \mfrac{2}{|\theta|} \int_0^r e^{-\frac{\gamma}{2}r} \sin \Bigl((s-r) \mfrac{|\theta|}{2} \Bigr) \nabla_{q^1} V(Q_s) ds, 
\end{align*}
where we used the explicit expressions for the integrals above. Again using $|\nabla V|=|\nabla U|\leq C_U$ we find 
\begin{equation*}
    |I_3^\eps| \leq \mfrac{2}{|\theta|} \int_0^r e^{-\frac{\gamma}{2}r} ds = \frac{2}{|\theta|} r e^{-\frac{\gamma}{2}r} \leq C\eps^{-\frac12}, 
\end{equation*}
and therefore $I_3^\eps \to 0$ as $\eps \to 0$.

Let us continue with $I_4^\eps$. 
We first interchange the order of integration
and then use \eqref{eq:aux-int} 
\begin{align*}
    I_4^\eps 
    &=  \sqrt{2\gamma\beta^{-1}} \int_0^r \int_s^r \Bigl[ \cos\Bigl((t-s)\mfrac{|\theta|}{2}\Bigr) -\mfrac{\gamma}{|\theta|} \sin\Bigl((t-s)\mfrac{|\theta|}{2}\Bigr) \Bigr] e^{-\frac{\gamma}{2}t}  dt \ e^{\frac{\gamma}{2}s} dW^1_s \\
    &= \sqrt{2\gamma\beta^{-1}} \int_0^r   \mfrac{2}{\gamma ^2+|\theta| ^2}
    \Bigl\{-e^{-\frac{\gamma}{2}(r-s) } \Bigl[\gamma  \cos \Bigl((s-r)\mfrac{|\theta|}{2}\Bigr)+|\theta|  \sin \Bigl((s-r)\mfrac{|\theta|}{2}\Bigr)\Bigr] + \gamma   \Bigr\}  \\ 
    & \qquad\qquad\qquad -\mfrac{\gamma}{|\theta|} \mfrac{2}{\gamma ^2+|\theta|^2} \Bigl\{e^{-\frac{\gamma}{2}(r-s) } \Bigl[\gamma  \sin \Bigl((s-r)\mfrac{|\theta|}{2}\Bigr)-|\theta|  \cos \Bigl((s-r)\mfrac{|\theta|}{2}\Bigr)\Bigr] + |\theta|\Bigr)\Bigr\} \ dW^1_s  \\
    &= \sqrt{2\gamma\beta^{-1}} \int_0^r   \mfrac{2}{\gamma ^2+|\theta| ^2}
    e^{-\frac{\gamma}{2}r }  \sin \Bigl((s-r)\mfrac{|\theta|}{2}\Bigr) (- |\theta| - \frac{\gamma^2}{|\theta|}) dW^1_s \\
    & = - \sqrt{2\gamma\beta^{-1}} \int_0^r  \mfrac{2}{|\theta|} e^{-\frac{\gamma}{2}r}  \sin \Bigl((s-r)\mfrac{|\theta|}{2}\Bigr) dW^1_s
\end{align*}
and using dominated convergence theorem for stochastic integrals~\cite[Corollary 18.13]{Kallenberg21} it follows that $|I_4^\eps| \to 0$ in probability.

\end{proof}
    
\begin{proof}[Proof of Proposition~\ref{prop:Behav-Q2P2}] \label{proof:spatslow}
This result makes use of Theorem \ref{cor:gronwall}.
For the case $\alpha = 0$ in $V(q)$ (recall definition~\eqref{eq:Valpha}), we use \eqref{eq:gronwall-L2} with $\hat q = 0$, which in turn requires the following bound which uses~\eqref{eq:spat-point-bound} 
\begin{align*}
        \int_0^T \E\bigl[\bigl| Q^{1}_s\bigr|^2 \bigr] ds   
        &\leq C\int_0^T \Bigl[ e^{-\gamma s}\Bigl(\E\bigl[|Q_0^{1}|^2\bigr] + \eps \E\bigl[|P_0^{1}|^2\bigr] \Bigr) + \eps \Bigr]ds
        \leq C\Bigl[\gamma^{-1}\Bigl( \E\bigl[|Q_0^{1}|^2\bigr] + \eps \E\bigl[|P_0^{1}|^2\bigr] \Bigr)+ \eps T \Bigr] ,
\end{align*}
which yields the required result~\eqref{eq:Spat-fast}.
For the case $\alpha > 0$ we use \eqref{eq:grownall-alphapos}. First note that by Jensen's inequality  $ \E\bigl[|Q_t^{1}|\bigr]^2  \leq \E\bigl[|Q_t^{1}|^2\bigr]$ and hence \eqref{eq:spat-point-bound} implies 
\begin{align*}
    \E\bigl[|Q_t^{1}|\bigr] \leq \sqrt{ C_1 \eps +  C_2 e^{- \gamma t} \left(|Q^1_0|^2 + \eps|P^1_0|^2\right) } \leq  \sqrt{C_1} \sqrt{ \eps} +  \sqrt{C_2} \left(|Q^1_0| + \sqrt{\eps} |P^1_0|\right),
\end{align*}
which leads to (note $\hat q=0$)
\begin{align*}
    \int_0^t e^{-\frac{\eta}{2}(t-s)}  \E\left[|Q^1_s|\right]  ds &\leq C\left(  \sqrt{ \eps} +  |Q^1_0| + \sqrt{\eps} |P^1_0|\right) \int_0^t e^{-\frac{\eta}{2} (t-s)} ds\leq  \frac{2C}{\eta} \bigl( \sqrt{\eps}  + |Q^1_0| + \sqrt{\eps}|P^1_0|\bigr).
\end{align*}
Inserting the last estimate into \eqref{eq:grownall-alphapos} the claim \eqref{eq:Spat-fast-2} follows. 
    The proof of \eqref{eq:Spat-fast} follows again with Theorem \ref{cor:gronwall}, specifically equation~\eqref{eq:gronwall-L2} together with the time-$t$-bound as given in~\eqref{eq:spat-point-bound}.
\end{proof}

\subsection{Phase-space confined Langevin dynamics}\label{App:PS-cons-proof}

We now study the phase-space confined Langevin dynamics. 

\begin{proof}[Proof of Proposition~\ref{prop:PS-Cons}] \label{proof:phase-space}
Recall that the phase-space confined Langevin dynamics~\eqref{eq:abc-slow},\eqref{eq:intro-Phase-Cons-Fast} with 
$Q^{1}_t,P^{1}_t\in\R^k$ and $Q^{2}_t,P^{2}_t \in \R^{d-k}$. 
Using the notation $Q^{1}_t = \bigl( (Q^{1}_t)^1,\ldots,(Q^{1}_t)^k\bigr)$, $P^{1}_t = \bigl( (P^{1}_t)^1,\ldots,(P^{1}_t)^k\bigr)$ for the coordinates of $Q^{1}_t \in\R^k$, $P^{1}_t\in\R^k$, it is checked that each pair $((Q^{1}_t)^i,(P^{1}_t)^i)$ in $\R^2$ for $i\in\{1,\ldots,k\}$ satisfies the auxiliary Langevin form~\eqref{eq:lemsollangevin} where $\tilde q_t \in \R^{d-1}$ are the remaining spatial variables and the constants are given by 
\begin{equation}\label{eq:PSC-lin}
    a= 1+\frac{1}{\eps}, \ \ b = \alpha+ \frac{1}{\eps}, \ \ c=\gamma\Bigl(1+\frac{1}{\eps}\Bigr), \ \ \tilde c= \sqrt{2\gamma\beta^{-1}}.
\end{equation}
We present the proof for the case that $V(q)$ does not have a quadratic term in $q^1$, i.e.\ $\alpha=0$ in~\eqref{eq:Valpha}. If $\alpha>0$ the constants $b$ above changes and $\theta$ changes accordingly. However, the scaling behaviour of $a,b,\theta$ remains the same independent of $\alpha \geq 0$ and is given in  \eqref{eq:phase-scaling}, \eqref{eq:phase-scaling2} below.

Recall from Appendix~\ref{app:VarConst} that $\theta=\sqrt{c^2-4ab}$ (defined in~\eqref{def:theta}) determines the form of the solution. Using the values of $a,b,c$ given above we have
\begin{equation*}
    \theta = \Bigl(1+\frac{1}{\eps} \Bigr)^{\frac12} \Bigl( \gamma^2\Bigl(1+\frac{1}{\eps}\Bigr)-\frac4\eps\Bigr)^{\frac12}.
\end{equation*}
Note that for $\eps>0$ we have $\theta\in\R$ if $\gamma=2$. Furthermore for $\gamma\neq 2$  
\begin{equation*}
    \theta \in \R \Longleftrightarrow \eps\geq \dfrac{4-\gamma^2}{\gamma^2} \ \ \text{and} \ \ \theta\in\mathbb C\backslash\R \Longleftrightarrow \eps<  \dfrac{4-\gamma^2}{\gamma^2}.
\end{equation*}    
In the case when $\gamma>2$, $4-\gamma^2<0$ and therefore $\theta\in \mathbb \R$ for any $\eps>0$. For $0< \gamma< 2$, $\theta\in \mathbb C\backslash\R$ if $\eps\in(0,\frac{1-\gamma^2}{\gamma^2})$. In conclusion, for small enough $\eps>0$
\begin{equation}\label{eq:PS-Cons-ThetaChoices}
    \theta\in\R \Longleftrightarrow \gamma\geq 2  \ \ \text{ and } \ \ \theta\in \mathbb C\backslash\R \Longleftrightarrow \gamma\in(0,2).
\end{equation}
Using Proposition~\ref{prop:sollangevin}, which is allowed since this solution applies for any $\theta\neq 0$ and $\eps$ small enough (specifically any $\eps>0$ when $\gamma^2\geq1$ or $\eps<\gamma^{-2}-1$ when $\gamma^2<1$), the solutions for $((Q^{1}_t)^i,(P^{1}_t)^i)$ in a vectorial form is
\begin{subequations}
\begin{align}
\begin{split}\label{eq:q-sol-PS}
\begin{aligned}
    Q^{1}_t &=  \Bigl[ \mfrac{1}{2}\Bigl(1-\mfrac{c}{\theta}\Bigr)e^{-\frac{1}{2}(c+\theta) t} + \mfrac{1}{2}\Bigl(1+\mfrac{c}{\theta}\Bigl)e^{-\frac{1}{2}(c-\theta) t} \Bigr] Q^{1}_0 - \mfrac{a}{\theta} \Bigl(e^{-\frac{1}{2}(c+\theta) t} - e^{-\frac{1}{2}(c-\theta) t}\Bigr)P^{1}_0 \\ 
    & + \mfrac{a}{\theta} \int_0^t\bigl( e^{-\frac{1}{2}(c+ \theta)(t-s)} -e^{-\frac{1}{2}(c - \theta)(t-s)}\bigr) \nabla_{q^1} V(Q_s) ds + \mfrac{a \tilde c }{\theta} \int_0^t \bigl(e^{-\frac{1}{2}(c - \theta)(t-s)} - e^{-\frac{1}{2}(c + \theta)(t-s)}\bigr) dW^1_s  \,,
\end{aligned}
\end{split}\\
\begin{split}\label{eq:p-sol-PS}
    \begin{aligned}
    P^{1}_t &= \mfrac{b}{\theta} \Bigl(e^{-\frac{1}{2}(c+\theta) t} - e^{-\frac{1}{2}(c-\theta) t}\Bigr)Q^{1}_0  + \mfrac{a}{\theta} \Bigl[\mfrac{2b}{c-\theta}e^{-\frac{1}{2}(c+\theta) t} - \mfrac{2b}{c+\theta}e^{-\frac{1}{2}(c-\theta) t}\Bigr] P^{1}_0  \\
    & \quad - \mfrac{2ab}{\theta} \int_0^t \Bigl( \mfrac{1}{c- \theta}e^{-\frac{1}{2}(c+\theta) (t-s)} - \mfrac{1}{c + \theta} e^{-\frac{1}{2}(c-\theta) (t-s)}\Bigr) \nabla_{q^1} V(Q_s) ds  \\ 
    & \qquad \quad  +\mfrac{2a b\tilde c}{\theta} \int_0^t \Bigl( \mfrac{1}{c- \theta}e^{-\frac{1}{2}(c+\theta) (t-s)} - \mfrac{1}{c + \theta} e^{-\frac{1}{2}(c-\theta) (t-s)}\Bigr) dW^1_s.
    \end{aligned}
\end{split}
\end{align}
\end{subequations}
We now prove part~\ref{item:phase-Q1}. Following the procedure outlined at the start of this section (recall~\eqref{eq:genPathBound}), which involves using Young's inequality, It\^o isometry, and the bound $|\nabla_{q^1} V|=|\nabla_{q^1}U|\leq C_U$ (recall~\eqref{eq:Valpha} and note that $\alpha=0$), we find for any $t\geq 0$
\begin{align}
    \E\Bigl[ |Q^{1}_t|^2\Bigr]    &\leq C \biggl\{  \E \biggl[\Bigl|\Bigl[ \mfrac{1}{2}\Bigl(1-\mfrac{c}{\theta}\Bigr)e^{-\frac{1}{2}(c+\theta) t} + \mfrac{1}{2}\Bigl(1+\mfrac{c}{\theta}\Bigl)e^{-\frac{1}{2}(c-\theta) t} \Bigr] Q^{1}_0 - \mfrac{a}{\theta} \Bigl(e^{-\frac{1}{2}(c+\theta) t} - e^{-\frac{1}{2}(c-\theta) t}\Bigr)P^{1}_0\Bigr|^2 \biggr] \notag\\ 
    & \ + \Bigl|\mfrac{a}{\theta}\Bigr|^2 \biggl( \int_0^t\bigl| e^{-\frac{1}{2}(c+ \theta)(t-s)} -e^{-\frac{1}{2}(c - \theta)(t-s)}\bigr| ds\biggr)^2  + \Bigl|\mfrac{a \tilde c }{\theta}\Bigr|^2 \int_0^t\bigl|e^{-\frac{1}{2}(c - \theta)(t-s)} - e^{-\frac{1}{2}(c + \theta)(t-s)}\bigr|^2 ds \biggr\} \notag\\
    & \eqqcolon C\left\{I_1^\eps+I_2^\eps+I_3^\eps\right\} \label{eq:phase-Q1}
\end{align}
where $I^\eps_i$ refers to the $i$-the term on the right-hand side of~\eqref{eq:phase-Q1} and $C=C(C_U)$ is a constant. As $\eps\to 0$, the asymptotic behaviour of the constants involved in~\eqref{eq:phase-Q1} is given by 
\begin{equation}\label{eq:phase-scaling}
    |\theta| = \bigO\left(\mfrac{1}{\eps}\right), \   \mfrac{a}{|\theta|} =\bigO(1), \ \mfrac{c}{|\theta|} =\bigO(1), \  c+|\theta| = \bigO\left(\mfrac{1}{\eps}\right), \  c-|\theta| = \bigO\left(\mfrac{1}{\eps}\right).
\end{equation}
Note that we write $|\theta|$ since $\theta $ is either purely imaginary (if $c^2 < 4ab$, which are defined in \eqref{eq:PSC-lin}) or real. Moreover, note that if $\theta $ is purely imaginary then $c + \Re(\theta) = c = \bigO(\eps^{-1})$ and if $\theta $ is real then $ c + \Re(\theta)= c + |\theta| .$\\
For the $I_2^\eps$ term in~\eqref{eq:phase-Q1} we find
\begin{align*}
    I_2^\eps &\leq \Bigl|\mfrac{a}{\theta}\Bigr|^2  \biggl( \int_0^t \bigl| e^{-\frac12 (c+\theta)(t-s)} \bigr|ds + \int_0^t \bigl| e^{-\frac12 (c-\theta)(t-s)} \bigr|ds\biggr)^2 \\
    & = \Bigl|\mfrac{a}{\theta}\Bigr|^2  \biggl( \int_0^t  e^{-\frac12 (c+\Re(\theta))(t-s)} ds + \int_0^t  e^{-\frac12 (c-\Re(\theta))(t-s)} ds\biggr)^2 \\
    & = \Bigl|\mfrac{a}{\theta}\Bigr|^2  \biggl( 
    \mfrac{2}{c+\Re(\theta)} \bigl[ 1-e^{-\frac12 (c+\Re(\theta))t}\bigr] + \mfrac{2}{c-\Re(\theta)} \bigl[ 1-e^{-\frac12 (c-\Re(\theta))t}\bigr] \biggl)^2 \\
    & = \Bigl|\mfrac{a}{\theta}\Bigr|^2  \biggl( 
    \mfrac{2}{c+\Re(\theta)} \bigl[ 1-e^{-\frac12 (c+\Re(\theta))t}\bigr] + \mfrac{2}{c-\Re(\theta))} \bigl[ 1-e^{-\frac12 (c-\Re(\theta))t}\bigr] \biggl)^2 \leq C\eps^2
\end{align*}
where $C>0$ is independent of $\eps$ and $t$. Here the final equality follows since $c+\Re(\theta),c-\Re(\theta)>0$ and the inequality follows by using~\eqref{eq:phase-scaling}. Using Young's inequality and then repeating the same arguments as above leads to 
\begin{align*}
    I_3^\eps \leq 2 \Bigl|\mfrac{a\tilde c}{\theta}\Bigr|^2  \biggl( 
    \mfrac{1}{c+\Re(\theta)} \bigl[ 1-e^{-(c+\Re(\theta))t}\bigr] + \mfrac{1}{c-\Re(\theta))} \bigl[ 1-e^{- (c-\Re(\theta))t}\bigr] \biggl) \leq C\eps.
\end{align*}
Finally, applying Young's inequality twice we arrive at 
\begin{align*}
    I_1^\eps 
    &\leq 4 \biggl\{ \E\bigl[|Q_0^{1}|^2\bigr]  \biggl ( \Bigl| \mfrac{1}{2}\Bigl(1-\mfrac{c}{\theta}\Bigr)\Bigr|^2 e^{-(c+\Re(\theta)) t} + \Bigl|\mfrac{1}{2}\Bigl(1+\mfrac{c}{\theta}\Bigl)\Bigr|^2e^{-(c-\Re(\theta)) t}  \biggr) \\
    &\qquad\qquad\qquad\qquad\qquad\qquad
    + \Bigl|\mfrac{a}{\theta}\Bigr|^2\E\bigl[|P_0^{1}|^2\bigr]  \Bigl[e^{-(c+\Re(\theta)) t} + e^{-(c-\Re(\theta)) t}\Bigr]^2  \biggr\}\\
    &\leq Ce^{-\frac{D}{\eps}t} \Bigl(\E\bigl[|Q_0^{1}|^2\bigr]+\E\bigl[|P_0^{1}|^2\bigr] \Bigr),
\end{align*}
where $C,D>0$ are independent of $\eps$, $t$, and the second inequality follows from~\eqref{eq:phase-scaling}. Substituting these bounds back into~\eqref{eq:phase-Q1} we arrive at the required estimate.

To estimate $P^{1}_t$, we need the following scaling behaviour of additional constants in~\eqref{eq:p-sol-PS} as $\eps\to 0$  
\begin{equation}\label{eq:phase-scaling2}
    \frac{b}{|\theta|}=\bigO(1), \  \frac{ab}{|\theta|} =\bigO\Bigl(\frac{1}{\eps}\Bigr), \ \frac{2b}{|c+\theta|}=\frac{2b}{|c-\theta|}=\bigO(1)
\end{equation}
as $\eps\to 0$. 
Using~\eqref{eq:p-sol-PS} and repeating the same arguments as above we find
\begin{align*}
    \E\Bigl[|P_t^{1}|^2\Bigr] &\leq C\biggl\{ \E\biggl[\biggl| \mfrac{b}{\theta} \Bigl(e^{-\frac{1}{2}(c+\theta) t} - e^{-\frac{1}{2}(c-\theta) t}\Bigr)Q^{1}_0  + \mfrac{a}{\theta} \Bigl[\mfrac{2b}{c-\theta}e^{-\frac{1}{2}(c+\theta) t} - \mfrac{2b}{c+\theta}e^{-\frac{1}{2}(c-\theta) t}\Bigr] P^{1}_0\biggr|^2\biggr]  \\
    & \quad +\biggl(\Bigl|\mfrac{2ab}{\theta}\Bigr| \int_0^t \Bigl| \mfrac{1}{c- \theta}e^{-\frac{1}{2}(c+\theta) (t-s)} - \mfrac{1}{c + \theta} e^{-\frac{1}{2}(c-\theta) (t-s)}\Bigr|  ds\biggr)^2   \\ 
    & \qquad \quad  +\Bigl|\mfrac{2a b\tilde c}{\theta}\Bigr|^2 \int_0^t \Bigl| \mfrac{1}{c- \theta}e^{-\frac{1}{2}(c+\theta) (t-s)} - \mfrac{1}{c + \theta} e^{-\frac{1}{2}(c-\theta) (t-s)}\Bigr|^2 ds \biggr\} \\
    &\leq C\Bigl\{ e^{-\frac{D}{\eps}t} \Bigl( \E \bigl[ |Q^{1}_0|^2\bigr]+\E \bigl[ |P^{1}_0|^2\bigr] \Bigr) + \eps^2  + \eps  \Bigr\}
\end{align*}
where the final inequality uses~\eqref{eq:phase-scaling},~\eqref{eq:phase-scaling2} which imply that $|\frac{ab}{\theta}||\frac{1}{(c+\theta)(c-\theta)}|=\bigO(\eps)$. Here $C,D>0$ are independent of $\eps$ and $t$.
This completes the proof of~\ref{item:phase-Q1}.

The proof of~\ref{item:phase-Q2P2} follows by repeating the proof of Proposition~\ref{prop:Behav-Q2P2}. Specifically, we use the estimates of \ref{item:phase-Q1} together with Theorem~\ref{cor:gronwall}. For the uniform in time $L^1$ bound we additionally note that \eqref{eq:PhasePathBound} by Jensen's inequality implies that
\begin{align*}
    \E \left[|Q^1_s|\right] 
    \leq C_1\Bigl( \eps + e^{-\frac{s}{\eps}C_2}\bigl(|Q^1_0| + |P^1_0|\bigr) \Bigr).
\end{align*}
 Assuming $\eps$ small enough, specifically we require $\eps<2C_2/\eta$ below, this leads to
\begin{align*}
    \int_0^t e^{-\frac{\eta}{2}(t-s)} \E\left[|Q^1_s|\right]  ds &\leq C  \int_0^t e^{-\frac{\eta}{2} (t-s)} \Bigl(  \sqrt{ \eps} +  e^{-\frac{s}{\eps} C_2} \left( |Q^1_0| +|P^1_0|\right)\Bigr) ds  \\
     &=  C e^{-\frac{\eta}{2}t} \int_0^t  \Bigl( e^{\frac{\eta}{2}s} \sqrt{ \eps} +  e^{s\left(-\frac{C_2}{\eps}+\frac{\eta}{2}\right) } \left( |Q^1_0| +|P^1_0|\right)\Bigr) ds\\ 
     &=  C e^{-\frac{\eta}{2}t} \int_0^t  \Bigl( e^{\frac{\eta}{2}s} \sqrt{ \eps} +  e^{s\left(\frac{\eta\eps-2C_2}{2\eps}\right) } \left( |Q^1_0| +|P^1_0|\right)\Bigr) ds\\
     &=  C e^{-\frac{\eta}{2}t}  \Bigl( \frac{2}{\eta} \sqrt{ \eps} (e^{\frac{\eta}{2}t}-1) + \frac{2\eps}{\eta\eps-2C_2} \bigl(e^{t\left(\frac{\eta\eps-2C_2}{2\eps}\right)} -1\bigr)   \left( |Q^1_0| +|P^1_0|\right)\Bigr) \\
    &\leq   C\left( \sqrt{\eps}  + e^{-\frac{\eta}{2} t} \frac{\eps}{2C_2-\eta\eps}\left( |Q^1_0| + \sqrt{\eps}|P^1_0|\right) \right) \\
    &\leq  C\left( \sqrt{\eps}  + e^{-\frac{\eta}{2} t} \eps\left( |Q^1_0| + \sqrt{\eps}|P^1_0|\right) \right),
\end{align*}
where we used that by Taylor's Theorem for some $h \in (0,\eps), \eps<C$ we have $\frac{\eps}{C-\eps} = \left(\frac{1}{C-h} + \frac{h}{(C- h)^2}\right)\eps \leq \left(\frac{1}{C} + \frac{\eps}{C^2}\right) \eps$.
\end{proof}

\section{Partial constraint limits}\label{app:PartCons-Limit-Proof}    

\subsection{Zero mass}
    
We now prove the zero mass limit discussed in Proposition~\ref{prop:Light}. 

\begin{proof}[Proof of Proposition~\ref{prop:Light}] \label{proof:zeromass}
Recall the (momentum-confined) Langevin dynamics~\eqref{eq:abc-slow},\eqref{eq:intro-0Mass-Fast}, 
where $Q^{1}_t,P^{1}_t\in\R^k$ and $Q^{2}_t,P^{2}_t \in \R^{d-k}$. 
Using the notation $Q^{1}_t = \bigl( (Q^{1}_t)^1,\ldots,(Q^{1}_t)^k\bigr)$, $P^{1}_t = \bigl( (P^{1}_t)^1,\ldots,(P^{1}_t)^k\bigr)$ for the coordinates of $Q^{1}_t \in\R^k$, $P^{1}_t\in\R^k$, it follows that each pair $((Q^{1}_t)^i,(P^{1}_t)^i)$ in $\R^2$ for $i\in\{1,\ldots,k\}$ satisfies the auxiliary Langevin form~\eqref{eq:lemsollangevin} where $\tilde q_t \in \R^{d-1}$ are the remaining spatial variables and the constants are given by 
\begin{equation*}
    a= 1+\frac1\eps, \ \ b = \alpha, \ \ c=\gamma\biggl(1+\frac1\eps\biggr), \ \ \tilde c= \sqrt{2\gamma\beta^{-1}}.
\end{equation*}
where $\alpha\geq 0$ is the parameter that characterises the quadratic part of potential $V$ (recall~\eqref{eq:Valpha}). In the following we deal with the case $\alpha=0$ and $\alpha>0$ separately, since the solution given by the variation of constants formula in Appendix~\ref{app:VarConst} differs.

\noindent\textbf{Case: $\alpha=0$.}
Recall that the results for the auxiliary Langevin dynamics~\eqref{eq:lemsollangevin} in Section~\ref{app:VarConst} only apply when $b\neq 0$ and to use Proposition~\ref{prop:varofconst} we need to calculate the matrix exponential for $A$ which is 
\begin{equation}\label{eq:LightMatExp}
    A=\begin{pmatrix}
        0 & a \\ 0 & -c
    \end{pmatrix}, \ \ 
    e^{At} = \begin{pmatrix}
        1 & \frac{a}{c} \bigl( 1-e^{-ct}\bigr) \\ 0 & e^{-ct}
    \end{pmatrix}.
\end{equation}
Repeating the same procedure as in Section~\ref{app:VarConst}, i.e.\ using the matrix exponential above we explicitly calculate the solution to~\eqref{eq:intro-0Mass-Fast} 
\begin{align}
    Q^{1}_t &=  Q^{1}_0 + \mfrac{1}{\gamma}\bigl(1-e^{-a\gamma t}\bigr) P^{1}_0 - \mfrac{1}{\gamma}\int_0^t \bigl(1-e^{-a\gamma (t-s)}\bigr) \nabla_{q^1}V(Q_s)ds + \sqrt{\mfrac{2\beta^{-1}}{\gamma}} \int_0^t \bigl( 1 - e^{-a\gamma (t-s)} \bigr) dW^1_s,\label{eq:Light-Q1-explicit} \\
    P^{1}_t &=  e^{-a\gamma t} P^{1}_0 - \int_0^t e^{-a\gamma (t-s)} \nabla_{q^1}V(Q_s)ds + \sqrt{2\gamma\beta^{-1}} \int_0^t e^{-a\gamma (t-s)} dW^1_s. \notag
\end{align}

We now prove part~\ref{item:Light-P1} of Proposition~\ref{prop:Light}. Using the strategy outlined in Section~\ref{sec:Strategy-constrained} we find
\begin{align*}
    \E\Bigl[|P_t^{1}|^2\Bigr] &\leq 3\biggl\{ 
    \E\biggl[|e^{-a\gamma t} P_0^{1}|^2\biggr]
    + C_U^2\biggl(\int_0^t e^{-a\gamma(t-s)}ds\biggr)^2
    +2\gamma\beta^{-1} \int_0^t e^{-2a\gamma(t-s)}ds
    \biggr\}\\
    &\leq C\Bigl( e^{-\frac{D}{\eps}t} \E\bigl[|P_0^{1}|^2\bigr]+ \eps^2+\eps\Bigr) 
\end{align*}
where the second inequality follows by explicitly calculating the integrals and using $a= \frac{\eps+1}{\eps}$. Here, the constants $C,D>0$ are independent of $\eps$ and $t$. This completes the proof of~\ref{item:Light-P1} for $\alpha=0$.

Next, we prove part~\ref{item:LightSlow} by providing bounds on the following terms 
\begin{align*}
    \bigl|(Q^{1},Q^{2},P^{2}_t)^\top - (\hat q_t,q_t,p_t)^\top\bigr|^2 = \bigl|\hat q_t-Q^{1}_t \bigr|^2 + \bigl| q_t-Q^{2}_t\bigr|^2 + \bigl|p_t-P^{2}_t \bigr|^2. 
\end{align*}
Using (note that we use $q_t$ for the limit of $Q^2_t$ as throughout the paper, and in this zero mass limit we use $\hat q_t$ for the limit of $Q^1_t$)
\begin{equation}\label{eq:q1-0mass}
    \hat q_t = \hat q_0 -\mfrac{1}{\gamma}\int_0^t \nabla_{q^1} V(\hat q_s,q_s)ds + \sqrt{\mfrac{2\beta^{-1}}{\gamma}}\int_0^t dW_s^1
\end{equation}
and~\eqref{eq:Light-Q1-explicit} we find
\begin{align*}
    \hat q_t-Q^{1}_t  &\leq \bigl(\hat q_0-Q^{1}_0 \bigr) +\mfrac{1}{\gamma}\bigl(1-e^{-a\gamma t}\bigr)P^{1}_0-\mfrac{1}{\gamma}\int_0^t \bigl( \nabla_{q^1} V(\hat q_s,q_s)-\nabla_{q^1} V(Q_s)\bigr) ds \\
    &\qquad\qquad+ \mfrac{1}{\gamma}\int_0^t e^{-a\gamma (t-s)}\nabla_{q^1} V(Q_s) ds -\sqrt{\mfrac{2\beta^{-1}}{\gamma}} \int_0^t e^{-a\gamma (t-s)}dW_s^1.
\end{align*}
Following the strategy outlined in \ref{sec:Strategy-constrained}, using the Young's inequality, $e^{-a\gamma t}\leq 1$ for any $t\geq 0$ for the second term, the Lipschitz bound on $V=U$ (since $\alpha =0$)  together with the Cauchy Schwartz inequality for the third term, and explicitly calculating the integrals above we arrive at 
\begin{align*}
    \bigl|\hat q_t-Q^{1}_t\bigr|^2 = C\biggl\{ \bigl|\hat q_0-Q^{1}_0\bigr|^2 + \bigl|P_0^{1}\bigr|^2 +t \int_0^t \bigl|(\hat q_s,q_s)^\top-Q_s\bigr|^2 ds + \eps^2 + \biggl|\int_0^t e^{-a\gamma(t-s)}dW_s^1\biggr|^2 \biggr\}
\end{align*}
where the $\eps$ terms arise since $a=1+\frac1\eps$ and $C$ is independent of $\eps$ and $t$. Note that by It\^o isometry the final term above satisfies 
\begin{equation*}
    \E\biggl[ \biggl|\int_0^t e^{-a\gamma(t-s)}dW_s^1\biggr|^2 \biggr] \leq C \eps
\end{equation*}
where $C>0$ is independent of $\eps$, $t$.  
Proceeding similarly, we also have the bounds 
\begin{align}
    \bigl|q_t-Q^{2}_t\bigr|^2 &\leq C\biggl\{ \bigl|q_0-Q^{2}_0\bigr|^2 + t\int_0^t \bigl|p_s-P^{2}_s\bigr|^2ds \biggr\} \label{eq:zero mass-q2-Q^2}\\ 
    \bigl|p_t-P^{2}_t\bigr|^2 &\leq C \biggl\{ \bigl|p_0-P^{2}_0\bigr|^2 + t \int_0^t \bigl|(\hat q_s,q_s)^\top-Q_s\bigr|^2 ds + t\int_0^t \bigl|p_s-P^{2}_s\bigr|^2 ds \biggr\}, \label{eq:zero mass-p2-P^2}
\end{align}
where $C$ is independent of $\eps$ and $t$. Altogether letting \[u(t)\coloneqq  \E\Big[ \bigl|(Q^{1}_t,Q^{2}_t,P^{2}_t)^\top - (\hat q_t,q_t,p_t)^\top\bigr|^2 \Big]\] 
we have by the above estimates and using Fubini's theorem 
\begin{align*}
 u(t) \leq    C\left(u(0)+ \E\big[ |P^1_0|^2\big] + t \int_0^t u(s) ds + \eps + \eps^2 \right).
\end{align*}
Using these bounds and applying Gronwall's lemma with $\delta(t) = C\left(u(0)+ \E[|P^1_0|^2]+  \eps + \eps^2 \right) $ and $\beta(s) \equiv Ct$ we arrive at the required bound
\begin{align*}
    u(t) \leq \delta(t) e^{\int_0^t \beta(s) ds} = C\left(u(0) + \E\left[|P^1_0|^2\right] + \eps + \eps^2\right) e^{Ct^2}.
\end{align*}
\paragraph{Pathwise bound.} Next we brifly discuss a pathwise bound which would make use of Doob's inequality for the stochastic integral term. More precisely, let \[\tilde u(T) \coloneqq  \E\bigg[ \sup\limits_{t \in [0,T]} \bigl|(Q^{1}_t,Q^{2}_t,P^{2}_t)^\top - (\hat q_t,q_t,p_t)^\top\bigr|^2 \bigg]. \]  
Then similarly as above, first taking the supremum over all bounds and then the expectation, we find
\begin{align*}
    \tilde u(T) \leq C \bigg(u(0) + \E(|P^1_0|^2) + T \int_0^T u(s) ds + \eps^2 +  \E\biggl[\sup_{t\in [0,T]} \biggl|\int_0^t e^{-a\gamma(t-s)}dW_s^1\biggr|^2 \biggr]\bigg)
\end{align*}
Next, using Doob's inequality we compute 
\begin{align*}
    \E\biggl[\sup_{t\in [0,T]} \biggl|\int_0^t e^{-a\gamma(t-s)}dW_s^1\biggr|^2 \biggr] 
    &= \E\biggl[\sup_{t\in [0,T]} e^{-a\gamma t} \biggl|\int_0^t e^{a\gamma s}dW_s^1\biggr|^2 \biggr] 
    \leq \E\biggl[\sup_{t\in [0,T]} \biggl|\int_0^t e^{a\gamma s}dW_s^1\biggr|^2 \biggr] \\
     &\leq 4 \int_0^T e^{2a\gamma s}ds  
    \leq \frac{2}{a\gamma} \exp^{2a\gamma T} = \bigO(\eps) e^{ C \frac{T}{\eps} }
\end{align*} 
and therefore
\begin{align*}
    \tilde u(T) \leq C_1 \bigg[u(0) + \E(|P^1_0|^2) + \eps^2 + \eps  e^{ C_2 \frac{T}{\eps} } + T \int_0^T \tilde u(s) ds  \bigg].
\end{align*}
Using Gronwall's lemma we arrive at 
\begin{align*}
    \tilde u(T) \leq C_1 \bigg(u(0) + \E\big[|P^1_0|^2\big] + \eps^2 + \eps  e^{ C_2 \frac{T}{\eps} }  \bigg) e^{C_1 T^2},
\end{align*}
which now is exploding as $\eps \to 0$.

\noindent\textbf{Case: $\alpha>0$.} We now use the general solution given in Proposition~\ref{prop:sollangevin} with parameters $a=1 +\eps^{-1}$, $b = \alpha >0$, $c=(1+\frac{1}{\eps})\gamma$ and $\theta= \frac{\gamma(\eps+1)}{\eps}\sqrt{1-\frac{4\alpha}{\gamma^2(\eps+1)}\eps}.$ In the following, if $\kappa\coloneqq \frac{4\alpha}{\gamma^2}>1 $ let $\eps < \frac{1}{\kappa-1},$ which guarantees that $\theta > 0$ and use that by Taylor's Theorem we can write 
\begin{align}\label{eq:scaling-quadpotzeromass}
    \frac{c}{\theta} = \frac{1}{\sqrt{1 - 4\alpha\gamma^{-2}\eps(\eps+1)^{-1}}} = 1 + \frac{2\alpha}{\gamma^2} \eps + \bigO(\eps^2), \quad \frac{a}{\theta} = \frac{c}{\gamma \theta}= \frac{1}{\gamma} + \bigO(\eps) \,, \quad c - \theta = \frac{2\alpha}{\gamma} + \bigO(\eps)\,.
\end{align}

Then with $\hat q_0=Q^1_0$ we have (where $Q^1_t$ has been calculated using~\eqref{eq:q-sol} and $\hat q_t$ is given in~\eqref{eq:q1-0mass})
\begin{align*}
    Q^1_t- \hat q_t &= \left[ \frac{1}{2}\left(1-\mfrac{c}{\theta}\right)e^{-\frac{1}{2}(c+\theta)t} + \frac{1}{2}\left(1+ \mfrac{c}{\theta}\right)e^{-\frac{1}{2}(c-\theta)t} - e^{-\frac{\alpha}{\gamma}t} \right]\hat q_0 - \mfrac{a}{\theta} \biggl( e^{-\frac{1}{2}(c+\theta)t} -e^{-\frac{1}{2}(c-\theta)t} \biggr)P^1_0 \\
    &+ \frac{a}{\theta}\int_0^t \left(e^{-\frac{1}{2}(c+\theta)(t-s)} - e^{-\frac{1}{2}(c-\theta)(t-s)}\right) \nabla_{q^1} U(Q_s) ds + \mfrac{1}{\gamma}\int_0^t e^{-\frac{\alpha}{\gamma}(t-s)} \nabla_{q^1} U(\hat q_s, q_s)ds \\
    &+\frac{a}{\theta}\sqrt{\mfrac{2\gamma}{\beta }} \int_0^t \Bigl( e^{-\frac{1}{2}(c-\theta)(t-s)} - e^{-\frac{1}{2}(c+\theta)(t-s)} \Bigr)  dW^1_s - \sqrt{\mfrac{2\beta^{-1}}{\gamma}}\int_0^t e^{-\frac{\alpha}{\gamma}(t-s)} dW_s^1 \,.
\end{align*}
Now, using \eqref{eq:scaling-quadpotzeromass} and adding and subtracting $\gamma^{-1} e^{-\frac{\alpha}{\gamma}(t-s)} \nabla_{q^1}U(Q_s)$ in the second line above we find
\begin{align*}
    Q^1_t- \hat q_t &= \left[ \bigO(\eps)e^{-\frac{1}{2} (c+\theta)t} + \bigl(1+ \bigO(\eps)\bigr)e^{-\left(\frac{\alpha}{\gamma} + \bigO(\eps)\right)t} - e^{-\frac{\alpha}{\gamma}t} \right]q^1_0 - \mfrac{1}{\gamma}\left(1 + \bigO(\eps) \right) \left( e^{-\frac{1}{2}(c+\theta)t} -e^{-\frac{\alpha}{\gamma}(1+ \bigO(\eps))t} \right) P^1_0  \\ 
   &+ \int_0^t \left( \frac{a}{\theta}e^{-\frac{1}{2}(c+\theta)(t-s)} + \frac{1}{\gamma} e^{-\frac{\alpha}{\gamma}t} - \frac{a}{\theta}e^{-\frac{1}{2}(c-\theta)(t-s)} \right) \nabla_{q^1} U(Q_s) ds \\
   & +\mfrac{1}{\gamma}\int_0^t e^{-\frac{\alpha}{\gamma}(t-s)} \left(\nabla_{q^1} U(\hat q_s,q_s) - \nabla_{q^1} U(Q_s) \right)ds \\
    &+\frac{a}{\theta}\sqrt{\mfrac{2\gamma}{\beta }} \int_0^t \Bigl(e^{-\frac{\alpha}{\gamma}(c-\theta)(t-s)} - e^{-\frac{1}{2}(c+\theta)(t-s)}\Bigr)  dW^1_s - \sqrt{\mfrac{2}{\beta\gamma}}\int_0^t e^{-\frac{\alpha}{\gamma}(t-s)} dW_s^1\,.
\end{align*}
Now, use that \eqref{eq:scaling-quadpotzeromass} implies 
\begin{align*}
 \left| e^{-\frac{\alpha}{\gamma}\tau} - \mfrac{\gamma a}{\theta} e^{-\frac{1}{2}(c-\theta)\tau} \right|
    &= \left| e^{-\frac{\alpha}{\gamma}\tau}\left( 1- (1+ \gamma\bigO(\eps)) e^{-\bigO(\eps)\tau}\right)\right| 
    \leq e^{-\frac{\alpha}{\gamma}\tau}  \left( \left| 1-e^{-\bigO(\eps)\tau} \right| + \gamma \bigO(\eps) e^{-\bigO(\eps)\tau} \right) = \bigO(\eps\tau).
\end{align*}
and similarly
\begin{align*}
 \left| \Big(1+ \frac{c}{\theta}\Big) e^{-\frac{1}{2}(c-\theta)\tau} - e^{-\frac{\alpha}{\gamma}\tau}  \right|
    &= \bigO(\eps\tau).
    \end{align*}
Squaring the difference $Q^1_t - \hat q_t$ as given above and taking the expectation, we arrive at
\begin{align*}
    \E \Bigl[|Q^1_t - \hat q_t|^2 \Bigr] \leq C \biggl(\eps^2 t^2 \E\left[|Q^1_0|^2 \right] + \E\left[|P^1_0|^2 \right] + \eps t^2+\eps^2 t^4 +  \int_0^t \E \biggl[|Q_s - (\hat q_s, q_s)^\top |^2 \biggr] ds \biggr).
\end{align*}
The $\eps t^2$ term arises from the stochastic integral terms (via It\^o isometry) and the $\eps^2 t^4$ arises due to the term in the second line.
The bounds for $|q^2-Q^2|^2$ and $|p^2-P^2|^2$ remain unchanged and are given in \eqref{eq:zero mass-q2-Q^2}, \eqref{eq:zero mass-p2-P^2}. Hence, the overall result only changes by the term $\eps^2 |Q^1_0|^2$ which was absent in the case $\alpha=0$.

For $P^1$ we now use the solution as given in \eqref{eq:p-sol} together with the scaling behaviour as given in \eqref{eq:scaling-quadpotzeromass}. This yields following the usual steps that
\begin{align*}
   \E\Big[ \bigl|P^1_t\bigr|^2 \Big]  &\leq C \bigg(\eps \E\left[|Q^1_0|^2 \right]  + (e^{-\frac{D}{\eps}t} + \eps)  \E\left[|P^1_0|^2 \right]   + \bigg(C_U\int_0^t  \mfrac{1}{c- \theta}e^{-\frac{1}{2}(c+\theta) (t-s)} + \mfrac{1}{c + \theta} e^{-\frac{1}{2}(c-\theta) (t-s)} ds \bigg)^2  \\ 
    & \qquad \quad  + \left. \int_0^t \Bigl( \mfrac{1}{c- \theta}e^{-(c+\theta) (t-s)} - \mfrac{1}{c + \theta} e^{-(c-\theta) (t-s)}\Bigr) ds \right) \\
    &\leq C \left(\eps \E\left[|Q^1_0|^2 \right]  + (e^{-\frac{D}{\eps}t} + \eps)  \E\left[|P^1_0|^2 \right]  + \eps^2 + \eps \right), 
\end{align*}
where we used that $(c-\theta)(c+\theta) = c^2 - \theta^2 = 4ab =4\alpha \eps^{-1}.$
\end{proof}

\subsection{Infinite mass}
    
We now prove the infinite mass limit discussed in Proposition~\ref{prop:heavy}. 
\begin{proof}[Proof of Proposition~\ref{prop:heavy}] \label{proof:impetus}
As in the previous proof, we discuss the cases $\alpha=0$ and $\alpha>0$ in $V(Q) = \alpha |Q|^2 + U(Q)$ separately, since the solutions differ (compare initial data for the limit). This proof follows the same arguments as above in the zero mass case, with the coefficients $a,c$ replaced by $a=\eps$ and $c=\eps \gamma$. We only present the difference in the calculations and intermediate results and refer to the proof above for details.

\noindent\textbf{Case: $\alpha=0.$ }
The choice $a=\eps, \ b=\alpha = 0$ and $c=\eps \gamma$ leads to 
the explicit solutions  
\begin{align}
    Q^{1}_t &=  Q^{1}_0 + \mfrac{1}{\gamma}\bigl(1-e^{-\eps \gamma t}\bigr) P^{1}_0 - \mfrac{1}{\gamma}\int_0^t \bigl(1-e^{-\eps \gamma (t-s)}\bigr) \nabla_{q^1}V(Q_s)ds + \sqrt{\mfrac{2\beta^{-1}}{\gamma}} \int_0^t \bigl( 1 - e^{-\eps \gamma (t-s)} \bigr) dW^1_s,\label{eq:Imp-Q1-explicit} \\
    P^{1}_t &=  e^{-\eps \gamma t} P^{1}_0 - \int_0^t e^{-\eps \gamma (t-s)} \nabla_{q^1}V(Q_s)ds + \sqrt{2\gamma\beta^{-1}} \int_0^t e^{-\eps \gamma (t-s)} dW^1_s. \notag
\end{align}

We now prove part~\ref{item:heavy-Q1}. Using the strategy outlined in Section~\ref{sec:Strategy-constrained} we arrive at
\begin{align*}
    \E\Bigl[\bigl|Q_t^{1}-Q^{1}_0\bigr|^2\Bigr] &\leq 3\biggl\{ 
    \E\biggl[\Bigl| \mfrac{1}{\gamma}\bigl(1-e^{-\eps \gamma t}\bigr) P^{1}_0\Bigr|^2\biggr]
    + \mfrac{C_U^2}{\gamma^2}\biggl(\int_0^t (1-e^{-\eps \gamma (t-s)}) ds\biggr)^2 +\mfrac{2\beta^{-1}}{\gamma} \int_0^t \bigl( 1 - e^{-\eps \gamma (t-s)} \bigr)^2 ds
    \biggr\}.
\end{align*}
By the mean value theorem we have $1-e^{-\eps \gamma \tau} = \eps \tau \gamma e^{- \eps \gamma x},$ for some $ x \in [0,\gamma\tau].$ This implies $1-e^{-\eps \gamma \tau} \leq \eps \tau \gamma$ which we use to bound the integrands as well as the first term. Computing the integrals we arrive at the overall bound
\begin{align*}
    \E\Bigl[|Q_t^{1}-Q^{1}_0|^2\Bigr] 
    &\leq C \eps^2 t^2 \Bigl( \E\bigl[|P_0^{1}|^2\bigr]+ (t + t^2)\Bigr), 
\end{align*}
where  $C$ is independent of $\eps$ and $t$.

For~\ref{item:heavySlow} which states the result for the convergence of the unconstrained variables we note that the above derived bound on $Q^1_t - Q^1_0$ gives
\begin{align*}
    \int_0^T \E\Bigl[\bigl|Q_s^{1}-Q^{1}_0\bigr|^2\Bigr]  ds
    &\leq C \int_0^T \Bigl(\eps^2 s^2 \E\bigl[|P_0^{1}|^2\bigr]+ \eps^2(s^3 + s^4)\Bigr) ds \leq C \eps^2 T^3\Bigl\{ \E\bigl[|P_0^{1}|^2\bigr] + (T+T^2 ) \Bigr\}
\end{align*}
and applying Theorem \ref{cor:gronwall} yields the result.

Next we prove part~\ref{item:heavyP1} which compares the momentum $P^{1}_t$ to the solution $p^1_t$ given by 
\begin{align}\label{eq:impetusp1}
    p^1_t =  p^1_0  - \int_0^t  \nabla_{q^1} V(Q^1_0,q_s) ds + \sqrt{2\gamma \beta^{-1}} \int_0^t  dW_s^1,
\end{align}
Adding a zero in the $\nabla V$ term, we find using Young's inequality and the Cauchy Schwarz inequality that 
\begin{align*}
    \E\Bigl[|P_t^{1} - p^1_t|^2\Bigr] &\leq 3\biggl\{ 
    \E\biggl[|e^{-\eps \gamma t} P_0^{1} - p^1_0|^2\biggr]
    + C_U^2\biggl(\int_0^t 1- e^{-\eps \gamma(t-s)}ds\biggr)^2  \\ & \qquad + L_U^2 t \int_0^t \E\left[ |Q_s - (Q^1_0,q_s)^\top|^2 \right] ds +\mfrac{2\gamma}{\beta} \int_0^t \left(1 - e^{-\eps \gamma(t-s)}\right)^2ds.
    \biggr\}
\end{align*}
Next, observe that by successive integration by parts we have for any $k \in \mathbb{N}$
\begin{align}\label{eq:intexptimespoly}
    \int_0^t e^{cs}s^k ds \leq C (t^k e^{ct} + \bar C).
\end{align}
Hence, using the results of \ref{item:heavy-Q1}-\ref{item:heavySlow} and \eqref{eq:intexptimespoly} we have 
\begin{align*}
    \int_0^t\E\left[|Q_s - q_s|^2  \right] ds &\leq C \eps^2 \int_0^t e^{\tilde C s} s^4 \left( \E\bigl[|P^1_0|^2\bigr] + s + s^2 \right) ds \leq C \eps^2 e^{\tilde C t} t^4 \left( \E\bigl[|P^1_0|^2\bigr] + t + t^2 \right).
\end{align*} 
Noting that
\begin{align*}
    \bigl|e^{- \eps \gamma t} P^{1}_0 - p^1_0 \bigr|^2 
    \leq e^{- 2\eps \gamma t} |P^{1}_0 - p^1_0 |^2  + |p^1_0|^2 (1-e^{- \eps \gamma t})^2 \leq e^{- 2\eps \gamma t} |P^{1}_0 - p^1_0 |^2  + \gamma|p^1_0|^2 \eps^2 t^2 
\end{align*}
we arrive at the final result 
\begin{align*}
     \E\Bigl[|P_t^{1} - p^1_t|^2\Bigr] &\leq C\biggl\{ e^{-2\eps \gamma t}
    \E\Bigl[| P_0^{1} - p^1_0|^2  \Bigr]
    +  \E\bigl[|p^1_0|^2\bigr]\eps^2 t^2  + \eps^2 (t^3+t^4)
     + \eps^2 e^{\tilde C t} t^4 \left( \E\bigl[|P^1_0|^2\bigr] + t + t^2 \right)\biggr\} \\
    &\leq C\biggl\{ e^{-2\eps \gamma t}
    \E\Bigl[| P_0^{1} - p^1_0|^2  \Bigr]
    +  \E\bigl[|p^1_0|^2\bigr]\eps t^2  +  \eps^2 t^4 e^{\tilde C t}  \E\left[ |P^{1}_0|^2\right]   + \eps^2 (t^3+t^4) +  \eps^2 (t^6 + t^5) e^{\tilde C t}
    \biggr\}.
\end{align*}

\paragraph{Case: $\alpha >0.$ } First note that $\theta = \sqrt{\eps^2 \gamma^2 - 4\eps \alpha} \in \mathbb{C} \backslash \R $ for small enough $\eps$, which is why we use the general solutions for $Q^1,P^1$  given in Corollary \ref{cor:sol_underdamped}.
 By l'Hospital it follows that $|\theta| \to 0$ as $\eps \to 0$ and observe that $|\theta|^2 = \bigO(\eps).$ Further observe that 
\begin{align*}
    \frac{c}{|\theta|} = \frac{\eps \gamma}{\eps \gamma \sqrt{\frac{\eps \gamma^2 - 4 \alpha }{\eps \gamma^2}}} = \frac{\sqrt{\eps \gamma^2}}{\sqrt{4\alpha - \eps \gamma^2}} = \bigO(\sqrt{\eps}) \ \text{ and } \ \frac{a}{|\theta|} = \frac{c}{\gamma|\theta|} = \bigO(\sqrt{\eps}).
\end{align*}
Using the explicit solution for $Q^1_t$ given in \eqref{eq:q-sol-complextheta} we find that similar to the case $\alpha=0$, we have
\begin{align*}
    \E \Bigl[ \left|Q^1_t -Q^1_0 \right|^2 \Bigr] & \leq  \left(\left|1- \cos\left(\frac{t |\theta|}{2} \right) \right| + \left|\frac{c}{|\theta|} \sin \left( \frac{t |\theta|}{2} \right) \right|^2 \right)\left|e^{-\frac{\eps \gamma}{2}t}\right|^2 \E\bigl[ |Q^1_0|^2 \bigr]    \\
    \qquad & + \frac{a^2}{|\theta|^2} \left|\sin \left( \frac{t |\theta|}{2} \right) \right|^2 \left|e^{-\frac{\eps \gamma}{2}t}\right|^2 \E\bigl[ |P^1_0|^2 \bigr]+ C \left(\frac{a^2C_U^2}{|\theta|^2} t + \frac{a^2}{|\theta|^2}t\right).
\end{align*}
By Taylor's Theorem we have $ f(x)\coloneqq 1- \cos(x) = \frac{1}{2}f''(y)x^2$ for some $y \in [0,x]$ and hence $|\cos(x)-1| \leq \frac{1}{2}|x|^2.$ Similarly $|\sin(x)| \leq |x|$ and we use both estimates for the first term.
Estimating the other $|\sin|$ term and 
the exponentials by one, we arrive at the final bound
\begin{align*}
    \E \Bigl[ \left|Q^1_t -Q^1_0 \right|^2 \Bigr] & \leq C\left( \eps^2 \left( t^2 + t^4 \right)   \E\bigl[|Q^1_0|^2\bigr] + \eps^2  \E\bigl[|P^1_0|^2\bigr] + \eps t \right)\,,
    \end{align*}
    which implies - in the same manner as in the case $\alpha=0$ - using Theorem \ref{cor:gronwall} now for $\alpha > 0$    that
    \begin{align*}
        \E\biggl[\left|\begin{pmatrix}
            q_t - Q^2_t \\ p_t - P^2_t
        \end{pmatrix} \right|^2\biggr] \leq \E\biggl[\sup\limits_{t \in [0,T]}\left|\begin{pmatrix}
            q_2 - Q^2_t \\ p_t - P^2_t
        \end{pmatrix} \right|^2 \biggr] \leq C   \left( \eps^2 (T^3 + T^5) \E\bigl[|Q^1_0|^2\bigr] + \eps^2 T \E\bigl[|P^1_0|^2\bigr] + \eps T^2 \right).
    \end{align*}
    The difference $P^1_t- p^1_t$, where $p^1_t$ is the solution to \eqref{eq:impetusp1} and the solution of $P^1_t$ is given in \eqref{eq:p-sol-complextheta} reads 
     \begin{align*}
    &\left|P^1_t - p^1_t  \right|   \leq
          \left| \frac{2\alpha}{|\theta|}  \sin\Bigl(t\mfrac{|\theta|}{2}\Bigr)Q^1_0\right| + \left|\cos\Bigl(t\mfrac{|\theta|}{2}\Bigr) - \frac{c}{|\theta|}\sin\Bigl(t \mfrac{|\theta|}{2}\Bigr) -1 \right| |P^1_0| \\
    & +  \int_0^t \left|\Bigl[ \cos\Bigl((t-s)\frac{|\theta|}{2}\Bigr) - 1 - \frac{c}{|\theta|} \sin\Bigl((t-s)\frac{|\theta|}{2}\Bigr) \Bigr] e^{-\frac{c}{2}(t-s)} - 1 + e^{-\frac{c}{2}(t-s)} \right| \left| \nabla U(Q_s) \right| ds  \\ &  - \int_0^t \left|\nabla U(Q^1_0,q_s) - \nabla U(Q_s) \right|ds+ \  \left| \tilde c \left( \int_0^t  \left(\cos\Bigl((t-s)\mfrac{|\theta|}{2}\Bigr) - \frac{c}{|\theta|} \sin\Bigl((t-s)\mfrac{|\theta|}{2}\Bigr) \right) e^{-\frac{c}{2}(t-s)} -1\right)   dW^1_s \right|
    \\ &\leq C \left\{ \alpha t  |Q^1_0| + \left(t+t^2\right)\eps|P^1_0| + \int_0^t \eps (s^2+ s) ds + \int_0^t |(Q^1_0,q_s)^\top - Q_s| ds \right\} \\
    &\quad + \left| \tilde c \int_0^t  \left(\left(\cos\Bigl((t-s)\mfrac{|\theta|}{2}\Bigr) -1 - \frac{c}{|\theta|} \sin\Bigl((t-s)\mfrac{|\theta|}{2}\Bigr) \right) e^{-\frac{c}{2}(t-s)} - 1 + e^{-\frac{c}{2}(t-s)} \right)   dW^1_s \right|,
    \end{align*}
    where we have added a zero $\nabla U(Q_s) - \nabla U(Q_s)$ in the first step and used the assumption on $U$ in the second step together with Taylor's Theorem to estimate the $\sin(x) $, $1- \cos(x)$ and $e^{x}$ terms.  Squaring the above inequality and taking the expectation, we can apply our general strategy (see Section \ref{sec:Strategy-constrained}) and compute for the stochastic integral term
    \begin{align*}
      \E \biggl[ & \left| \tilde c \int_0^t  \left(\cos\Bigl((t-s)\mfrac{|\theta|}{2}\Bigr) -1 - \frac{c}{|\theta|} \sin\Bigl((t-s)\mfrac{|\theta|}{2}\Bigr) \right) e^{-\frac{c}{2}(t-s)} -1 + e^{-\frac{c}{2}(t-s)}  dW^1_s \right|^2 \biggr] \\
      & \leq C \int_0^t \left(\left| \cos\Bigl((t-s)\mfrac{|\theta|}{2}\Bigr) - 1 \right|^2+  \left|\frac{c}{|\theta|} \sin\Bigl((t-s)\mfrac{|\theta|}{2}\Bigr)\right|^2 \right)\left|e^{-\frac{c}{2}(t-s)} \right|^2 + \left| e^{-\frac{c}{2}(t-s)} -1  \right|^2 ds \\
      & \leq C \int_0^t \eps^2 (s^2 +s^4) ds \leq C \eps^2 (t^3 +t^5)
    \end{align*}
    so that, altogether, we have
    \begin{align*}
       \E \Bigl[  |p^1_t - P^1_t|^2 \Bigr] & \leq C \left\{ t^2 \E\bigl[ |Q^1_0|^2\bigr] + \eps^2  \left( t^2 + t^4\right) \E\bigl[|P^1_0|^2 \bigr] + \eps^2 (t^3+t^5)  + t \int_0^t | (Q^1_0,q_s) -  Q_s|^2 ds \right\}.
       \end{align*}    
    Using the $Q^1$ and $Q^2$ estimates from above in $\int_0^t |( Q^1_0,q_s)^\top - Q_s|^2 ds = \int_0^t |Q^1_0-Q^1_s|^2 + |q_s - Q^2_s|^2 ds$ we arrive at the final result
   \begin{align*}
       \E \Bigl[ |p^1_t - P^1_t|^2 \Bigr] 
       & \leq C \left\{ t^2\left( 1+ \eps^2 (t^3+t^5)\right) \E\bigl[|Q_0^1|^2\bigr] + \eps^2 (  t^2 +  t^4) \E\bigl[|P^1_0|^2\big] + \eps^2( t^3 + t^5)+ \eps t^4 \right\}.
   \end{align*}
\end{proof}

\subsection{Infinite friction with/without fluctuation-dissipation}
We now prove the infinite-friction limit discussed in Proposition~\ref{prop:PartialMom}.

\begin{proof}[Proof of Proposition~\ref{prop:PartialMom}]\label{proof:inf-fric}
This proof closely mirrors the proof above of Proposition~\ref{prop:Light} and therefore we only point out the essential differences. 

Recall the pre-limit Langevin dynamics~\eqref{eq:abc-slow},\eqref{eq:intro-InfFriction-withFD-Fast} where $Q^{1}_t,P^{1}_t\in\R^k$ and $Q^{2}_t,P^{2}_t \in \R^{d-k}$. Using the notation $Q^{1}_t = \bigl( (Q^{1}_t)^1,\ldots,(Q^{1}_t)^k\bigr)$, $P^{1}_t = \bigl( (P^{1}_t)^1,\ldots,(P^{1}_t)^k\bigr)$ for the coordinates of $Q^{1}_t \in\R^k$, $P^{1}_t\in\R^k$. Each pair $((Q^{1}_t)^i,(P^{1}_t)^i)$ in $\R^2$ for $i\in\{1,\ldots,k\}$ satisfies the auxiliary Langevin form~\eqref{eq:lemsollangevin} where $\tilde q_t \in \R^{d-1}$ are the remaining spatial variables and the constants are given by 
\begin{equation*}
    a= 1, \ \ b = \alpha, \ \ c=\frac{\gamma}{\eps}, \ \ \tilde c= \sqrt{\frac{2\gamma\beta^{-1}}{\eps}},
\end{equation*}
where $\alpha\geq 0$ is the parameter that characterises the quadratic part of potential $V$ (recall~\eqref{eq:Valpha}). We first consider the case $\alpha=0$ and then discuss $\alpha>0$.

\textbf{Case: $\alpha=0$.}
Using the matrix exponential~\eqref{eq:LightMatExp}, but now with the values for $a,c$ given above we arrive at the corresponding solutions 
\begin{align*}
    Q^{1}_t &=  Q^{1}_0 + \mfrac{\eps}{\gamma}\bigl(1-e^{-\frac{\gamma}{\eps} t}\bigr) P^{1}_0 - \mfrac{\eps}{\gamma}\int_0^t \bigl(1-e^{-\frac{\gamma}{\eps} (t-s)}\bigr) \nabla_{q^1}V(Q_s)ds + \sqrt{\mfrac{2\beta^{-1}\eps}{\gamma}} \int_0^t \bigl( 1 - e^{-\frac{\gamma}{\eps} (t-s)} \bigr) dW^1_s,\\
    P^{1}_t &=  e^{-\frac{\gamma}{\eps} t} P^{1}_0 - \int_0^t e^{-\frac{\gamma}{\eps} (t-s)} \nabla_{q^1}V(Q_s)ds + \sqrt{\mfrac{2\gamma\beta^{-1}}{\eps}} \int_0^t e^{-\frac{\gamma}{\eps} (t-s)} dW^1_s.
\end{align*}
Using the standard procedure used in the earlier proofs we find that
\begin{align}
    \E\Bigl[|Q_t^{1} - Q_0^{1}|^2 \Bigr] &\leq
    C\biggl\{  \eps^2 \E\bigl[|P_0^{1}|^2\bigr] + \eps^2\biggl(\int_0^t \bigl( 1 - e^{-\frac{\gamma}{\eps} (t-s)} \bigr)ds  \biggr)^2 + \eps \int_0^t \bigl( 1 - e^{-\frac{\gamma}{\eps} (t-s)} \bigr)^2 ds \biggr\} \notag\\
    & \leq C \Bigl\{  \eps^2 \E\bigl[|P_0^{1}|^2\bigr] + \eps^2 t^2 + \eps t \Bigr\}, \label{eq:Q1-Q0-inffric}
\end{align}
where $C>0$ is independent of $\eps$ and $t$.  The proof of the bound on $P^{1}$ follows exactly as in the proof of Proposition~\ref{prop:Light}, which gives
\begin{align*}
\E\Big[\big|P^{1}_t\big|^2\Big]&\leq C\bigg(e^{-\frac{D}{\eps}t} |P^1_0|^2 +\eps^2 + \frac{2\gamma\beta^{-1}}{\eps} \biggl[\int_0^t e^{-\frac{2\gamma}{\eps}(t-s)}ds\biggr]\bigg)\leq C\bigg(e^{-\frac{D}{\eps}t} |P^1_0|^2 +\eps^2 + \beta^{-1}\bigg),
\end{align*}
for $C,D>0$ independent of $\eps$ and $t$.

The proof of~\eqref{eq:PartialMom-Slow} follows using Theorem \ref{cor:gronwall}  together with~\eqref{eq:Q1-Q0-inffric} 
\begin{align*}
    \E\biggl[\sup_{t\in [0,T]}\biggr|
        \begin{pmatrix}
           q_t - Q^{2}_t \\ p_t - P^{2}_t
        \end{pmatrix} \biggr|^2 \biggr] 
        &\leq C_1 e^{C_2 T}\biggl\{
        \E\biggl[\biggl|
        \begin{pmatrix}
           q_0 - Q^{2}_0 \\ p_0 - P^{2}_0
        \end{pmatrix} \biggr|^2\biggl] + L_U^2 T  \, \E\biggl[\int_0^T \bigl| Q^{1}_s -\hat q\bigr|^2ds \biggr]
        \biggr\} \\
        &\leq C_1 e^{C_2 T}\biggl\{
        \E\biggl[\biggl|
        \begin{pmatrix}
           q_0 - Q^{2}_0 \\ p_0 - P^{2}_0
        \end{pmatrix} \biggr|^2\biggl] + L_U^2 T^2 \biggl[\eps \Bigl(\eps\E\bigl[|P_0^{1}|^2\bigr] + \eps T^2+T)\Bigr) +  \E\bigl[|Q^{1}_0-\hat q|^2\bigr] \biggr]
        \biggr\} 
\end{align*}
where the constants $C_1,C_2>0$ are independent of $\eps$ and $T$. Here the second inequality follows by using the triangle inequality to $|Q_s^{1}-Q^{1}_0+Q^{1}_0-\hat q|$ and then applying  Fubini's theorem. This leads to the required estimate~\eqref{eq:PartialMom-Slow}. 

\textbf{Case: $\alpha>0$.} The proof of the results above changes slightly, in that we resort to the general solution  \eqref{eq:q-sol} and \eqref{eq:p-sol} with $a=1, b=\alpha, c= \gamma \eps^{-1}, \tilde c=\sqrt{2\gamma\beta^{-1}\eps^{-1}}$. To compute the scaling limits as $\eps \to 0$ for the constants involved in the solution we use $\sqrt{1-x} = 1 - \frac{x}{2} - \frac{x^2}{8} -\bigO(x^3)$  and  $(1-x)^{-\frac{1}{2}} = 1 + \frac{x} {2} + \frac{3 x^2}{8} + \bigO(x^3) $ around $x=0$ which yields 
\begin{align*}
    \theta &= \sqrt{c^2 - 4ab} = \sqrt{\gamma^2 \eps^{-2} - 4\alpha} 
    = \bigO(\eps^{-1}), \ \  \frac{c}{\theta} = \frac{1}{\sqrt{1-4\alpha\gamma^{-2} \eps^2}} = 1+ \bigO(\eps^2)\, \\
    c-\theta &= \mfrac{\gamma}{\eps}\biggl(1- \sqrt{1- \mfrac{4\alpha\eps^2}{\gamma^2}} \biggr) = \frac{\gamma}{\eps} \left(1 - (1- 2\alpha \gamma^{-1}\eps^2 - \bigO(\eps^4)) \right)= \bigO(\eps).  
    \end{align*}
Then following our general procedure we arrive at the same bound
\begin{align*}
    \E \Big[ \left| Q^1_t - Q^1_0 \right|^2 \Big] & \leq 
    C \bigg(\bigg|1 - \frac{1}{2}\bigg(1 + \frac{c}{\theta}\bigg)e^{-\frac{1}{2}(c-\theta)t} \bigg|^2 \E\big[ \big|Q^1_0 \big|^2\big] + \bigg|\frac{1}{2}\bigg(1 - \frac{c}{\theta}\bigg)e^{-\frac{1}{2}(c+\theta)t} \bigg|^2 \E\bigl[ \big|Q^1_0 \big|^2\bigr]   \\
     & \qquad  + \bigg|\frac{a}{\theta} \bigg|^2 \bigg(\E\big[ \big| P^1_0\big|^2 \big] + t^2 C_U^2 + \tilde c^2 t  \bigg)\bigg) \\
     & \leq C  \eps \Big( \eps (1 +t^2) \E \big[ \big|Q^1_0\big|^2 \big] + \eps\E \big[ \big|P^1_0\big|^2 \big] + \eps t^2  + t \Big), 
\end{align*}
where we have estimated the exponentials in the  $P^1_0$ term and the two integrals in the solution~\eqref{eq:q-sol} by one  and used $\frac{a\tilde c}{\theta}=\bigO(\sqrt{\eps})$.

To estimate the first term above we have used the following argument. Let $\eps \leq (\kappa + \delta)^{-\frac{1}{2}} $, where  $\kappa=4\alpha\gamma^{-2}>0$ and $ \delta >0$ is fixed. Using Taylor expansion of $(1+\frac{c}{\theta})$ and the explicit formula for $c-\theta$ we find 
\begin{align}
    \bigg|1 - \frac{1}{2}\bigg(1 + \frac{c}{\theta}\bigg)e^{-\frac{1}{2}(c-\theta)t} \bigg| 
    &= \bigg|1 - \frac{1}{2}\bigg(2+ \frac{\kappa\xi}{(1-\kappa\xi^2)^{\frac{3}{2}}}\eps \bigg)e^{-\frac{\gamma}{\eps}(1-\sqrt{\kappa\eps^2})t} \bigg|
    \notag \\ &\leq \big|1-e^{-\frac{\gamma}{\eps}(1-\sqrt{\kappa\eps^2})t}\big| + \bigg| \frac{\kappa\xi\eps}{2(1-\kappa\xi^2)^{\frac{3}{2}}} e^{-\frac{\gamma}{\eps}(1-\sqrt{\kappa\eps^2})t}\bigg|\label{eqInfMass-Q0bound}
\end{align}
where $\xi\in (0,\eps)$. We now estimate each of these terms separately. 

By Taylor expansion around $\eps=0$  we find
\begin{equation*}
    1-\sqrt{\kappa\eps^2} =  \bigg(\frac{\kappa}{\sqrt{1-\kappa\zeta^2}} + \kappa^2\zeta^2(1-\kappa\zeta^2)^{-\frac32}\bigg)\frac{\eps^2}{2},
\end{equation*}
for some $\zeta\in(0,\eps).$
Introducing 
\begin{equation*}
    g(\eps)\coloneqq \exp\bigg(-\frac{\gamma}{\eps}(1-\sqrt{\kappa\eps^2})t\bigg) = \exp\bigg(-\frac{\gamma\eps t}{2}\bigg(\frac{\kappa}{\sqrt{1-\kappa\zeta^2}} + \kappa^2\zeta^2(1-\kappa\zeta^2)^{-\frac32}\bigg)\bigg)
\end{equation*}
and using mean-value theorem along with $g(0)=1$ (as $\zeta=0$ as well) we find
\begin{align*}
    1-e^{-\frac{\gamma}{\eps}(1-\sqrt{\kappa\eps^2})t} = g'(\nu)\eps,
\end{align*}
for some $\nu\in (0,\eps)$. By definition
\begin{equation*}
    g'(\eps)=-\frac{\gamma t}{2}g(\eps) \bigg(\frac{\kappa}{\sqrt{1-\kappa\zeta^2}} + \kappa^2\zeta^2(1-\kappa\zeta^2)^{-\frac32}\bigg).
\end{equation*}
Clearly $|g(\eps)|\leq 1$ for small enough $\eps$. Furthermore, observe that $\zeta  \mapsto \frac{\kappa}{\sqrt{1-\kappa\zeta^2}} + \kappa^2\zeta^2(1-\kappa\zeta^2)^{-\frac32}, $ is increasing in $\zeta.$ Since $\zeta \in (0,\eps)$ and $\eps< (\kappa + \delta)^{-\frac{1}{2}}$ it follows that  
\begin{equation*}
    |g'(\eps)|\leq \frac{\gamma t}{2} \bigg(\sqrt{\frac{\kappa}{\delta}}+\sqrt{\frac{\kappa}{\delta^3}}\bigg) 
\end{equation*}
where the right-hand side is bounded uniformly in $\eps$, and so we can conclude that 
\begin{equation*}
    |1-e^{-\frac{\gamma}{\eps}(1-\sqrt{\kappa\eps^2})t}| \leq \bar C t \eps
\end{equation*}
for some $\bar C$ independent of $\eps$ and $t$. Using a similar argument as above where we choose $\eps^2<\frac1\kappa-\delta$ for some $\delta>0$ small enough and since $|e^{-\frac{\gamma}{\eps}(1-\sqrt{\kappa\eps^2})t}|\leq 1$ it also follows that
\begin{equation*}
    \bigg| \frac{\kappa\xi\eps}{2(1-\kappa\xi^2)^{\frac{3}{2}}} e^{-\frac{\gamma}{\eps}(1-\sqrt{\kappa\eps^2})t}\bigg| \leq  \hat C\eps, 
\end{equation*}
for some $\hat C>0$ independent of $\eps$ and $t$. Combining these bounds and substituting back into~\eqref{eqInfMass-Q0bound} we arrive at the claimed bound for the $\E[|Q^1_0|^2]$ term.

By Theorem \ref{cor:gronwall} with $\alpha > 0$, i.e. \eqref{eq:gronwall-L2}, this implies that 
\begin{align*}
    \E\biggl[\sup_{t\in [0,T]}\biggr|
        \begin{pmatrix}
           q_t - Q^{2}_t \\ p_t - P^{2}_t
        \end{pmatrix} \biggr|^2 \biggr] 
        &\leq C \biggl\{
       \E\biggl[\biggl|
        \begin{pmatrix}
           q_0 - Q^{2}_0 \\ p_0 - P^{2}_0
        \end{pmatrix} \biggr|^2\biggl] + T  \eps \Big(  \E \big[ \big|Q^1_0\big|^2 \big] + \eps\E \big[ \big|P^1_0\big|^2 \big] + \eps T^2  + T \Big)
        \biggr\}  \\
        & \qquad +  C\E\bigl[|Q^{1}_0-\hat q|^2\bigr]. 
\end{align*}
Similarly, for $P^1_t$ using \eqref{eq:p-sol}, the scaling behaviour of the constants involved, and repeating the same  procedure as above,  we find 
\begin{align*}
    \E \Big[ \left| P^1_t \right|^2 \Big] & \leq
    C_1 \bigg( \bigg|\frac{b}{\theta}\bigg|^2 \E\left[|Q^1_0|^2\right] + \bigg|\frac{a}{\theta}\bigg|^2 \left(\frac{4b^2}{(c-\theta)^2}e^{-(c+\theta)t} + \frac{4b^2}{(c+\theta)^2}e^{-(c-\theta)t} \right) \E \left[ |P^1_0|^2\right]  \bigg)\\
    & \qquad + C_2\bigg(\frac{2ab}{\theta} C_U \int_0^t \bigg[\frac{1}{c-\theta}e^{-\frac{1}{2}(c+\theta)(t-s)} + \frac{1}{c+\theta}e^{-\frac{1}{2}(c-\theta)(t-s)} \bigg] ds  \bigg)^2 \\
    & \qquad + C_3 \bigg(\frac{ab\tilde c}{\theta}\bigg)^2 \int_0^t \bigg(\frac{1}{(c+\theta)^{2}}e^{-(c-\theta)(t-s)} + \frac{1}{(c-\theta)^{2}}e^{-(c+\theta)(t-s)}\bigg) ds \\
     &\leq C   \left( \eps^2 \E \big[ \big|Q^1_0\big|^2 \big] +  (e^{-\frac{\gamma}{\eps}r} +\eps^4)\E \big[ \big|P^1_0\big|^2 \big] + \eps + \beta^{-1} \right), 
\end{align*}
where $C>0$ is independent of $\eps$ and $t$. The bound on the $P^1_0$ term follows by using  
\begin{equation*}
    \bigg|\frac{a}{\theta}\bigg|^2 \frac{4b^2}{(c-\theta)^2}=\bigO(1), \ \ e^{-(c+\theta)r} \leq e^{-\frac{\gamma}{\eps}r}, \ \ \frac{4b^2}{(c+\theta)^2} \leq D\eps^2
\end{equation*}
for some constant $D>0$ independent of $\eps$ and $t$. 
The bound on the first integral follow since $\frac{ab}{\theta^2} \sim \bigO(\eps^2)$ and
\begin{align*}
    \int_0^t \frac{1}{c-\theta} e^{-(c+\theta)(t-s)} ds = \frac{1}{c^2-\theta^2} (1- e^{-(c+\theta)t}) \leq \frac{1}{4ab}, \\
    \int_0^t \frac{1}{c+\theta} e^{-(c-\theta)(t-s)} ds = \frac{1}{c^2-\theta^2} (1- e^{-(c-\theta)t}) \leq \frac{1}{4ab} \,.
\end{align*}
The bound on the final (stochastic) integral also follows using the identities above along with 
\begin{align*}
    \biggl|\frac{ab\tilde c^2}{4\theta^2} \frac{1}{c-\theta} (1- e^{-(c+\theta)t})\biggr| &\leq \frac{ab\tilde c^2}{4\theta^2} = \frac{\alpha}{4} \frac{2\gamma}{\beta\eps} \frac{\eps^2}{\gamma^2-4\alpha\eps^2} \frac{\gamma}{\eps(2\alpha+\bigO(\eps^2))} \leq \frac{1}{2\beta}\frac{1}{2+\bigO(\eps)} \leq \frac{1}{4\beta}+\bigO(\eps),\\
    \biggl|\frac{ab\tilde c^2}{4\theta^2} \frac{1}{c+\theta} (1- e^{-(c+\theta)t})\biggr| &\leq \frac{ab\tilde c^2}{4\theta^2} = \frac{\alpha}{4} \frac{2\gamma\eps}{\gamma^2-4\alpha\eps^2}\frac{\eps}{\gamma(2+\bigO(\eps))} = \bigO(\eps^2).
\end{align*}
\end{proof}
\paragraph{Infinite friction without fluctuation-dissipation.}
We now comment on the setting of infinite friction without fluctuation discussed in Remark~\ref{rem:InfFric-withouFD}. The proof of the corresponding estimates follows as in the proof of infinite friction with fluctuation dissipation discussed above.  The key difference is that the constant $\tilde c = \sqrt{2\gamma\beta^{-1}}$ is independent of $\eps$. The final estimates in this setting read (note that $P^1_t$ vanishes below as opposed to the fluctuation-dissipation discussed above as the noise evolves on the slower scale) 
\begin{align*}
    \E\biggl[|Q_t^{1} - Q_0^{1}|^2 \biggr] 
    &\leq C_1 \Bigl\{  \eps^2\mathds{1}_{\{\alpha>0\}} (1+t^2) \E\bigl[|Q_0^{1}|^2\bigr] +\eps^2 \E\bigl[|P_0^{1}|^2\bigr] + \mathds{1}_{\{\alpha=0\}}\eps^2 t^2 + \mathds{1}_{\{\alpha>0\}}\eps t^2 +\eps^2 t \Bigr\},\\
    \E\biggl[|P_t^{1} |^2 \biggr] &\leq C_2   \left( \eps^2 \mathds{1}_{\{\alpha>0\}}\E \big[ \big|Q^1_0\big|^2 \big] +  (e^{-\frac{t}{\eps}} +\eps^4 \mathds{1}_{\{\alpha>0\}})\E \big[ \big|P^1_0\big|^2 \big] + \eps^2 \right), \\
    \E\biggl[\sup_{t\in [0,T]}\biggr|
        \begin{pmatrix}
           q_t - Q^{2}_t \\ p_t - P^{2}_t
        \end{pmatrix} &\biggr|^2 \biggr] 
        \leq \begin{cases}
        \eps^2 C_3 e^{C_4 T} T^2
        \Bigl(\E\bigl[|P_0^{1}|^2\bigr] + (T^2+T)\Bigr) \ \ \text{if $\alpha=0$}, \\
        \eps^2  C_5 \eta^{-1} T
        \Bigl(\eps^2 \E\bigl[|P_0^{1}|^2\bigr]+\E\bigl[|P_0^{1}|^2\bigr] + (T^2+T)\Bigr) &\text{if $\alpha>0$},
        \end{cases}
\end{align*}
where $C_i>0$ for $i\in\{1,\ldots,5\}$ is independent of $\eps$, $t$ and $T$, and $\eta\in (0,\gamma]$. We have assumed that $q_0=Q^2_0$, $p_0=P^2_0$, and $\hat q=Q^1_0$ to arrive at the final estimate.

\section{Pathwise estimates for constrained variables}\label{app:AlmostSure}
So far in this article we have provided pointwise in time $L^2$ estimates for the constrained variables, for instance see~\eqref{eq:PhasePathBound} which provides estimates on $\E[|Q^1_t|^2]$. Below we demonstrate how these results can be extended to pathwise estimates on $\sup_{t\in [0,T]}|Q^1_t|$ in the almost sure sense and on $\E[\sup_{t\in [0,T]}|Q^1_t|]$.  To simplify the discussion we focus on the setting of phase-space confined Langevin dynamics (see Proposition~\ref{prop:PS-Cons}) with $\alpha=0$,  using the solution to $Q^{1}_t$ given by~\eqref{eq:q-sol}. We choose $\gamma>2$ which ensures that $\theta\in \R$ (see~\eqref{eq:PS-Cons-ThetaChoices}) and simplifies the following discussion. Finally, we assume that the initial datum is deterministic. The computations below generalise straightforwardly when these assumptions are dropped.  

\subsection{Almost-sure estimates}
Since we are interested in estimates for any $\omega\in \Omega$ (where $\Omega$ is the underlying probability space), using~\eqref{eq:q-sol} we find (note that $\alpha=0$)
\begin{align*}
    |Q^{1}_t(\omega)| &\leq  \Bigl| \mfrac{1}{2}\Bigl(1-\mfrac{c}{\theta}\Bigr)e^{-\frac{1}{2}(c+\theta) t} + \mfrac{1}{2}\Bigl(1+\mfrac{c}{\theta}\Bigl)e^{-\frac{1}{2}(c-\theta) t} \Bigr| \bigl|Q^{1}_0\bigr| + \Bigl|\mfrac{a}{\theta}\Bigr| \Bigl|e^{-\frac{1}{2}(c+\theta) t} - e^{-\frac{1}{2}(c-\theta) t}\Bigr|\bigl|P^{1}_0\bigr| \\ 
    & \ + \Bigl|\mfrac{a}{\theta}\Bigr|C_U \int_0^t\bigl| e^{-\frac{1}{2}(c+ \theta)(t-s)} -e^{-\frac{1}{2}(c - \theta)(t-s)}\bigr|  ds + \Bigl|\mfrac{a \tilde c }{\theta}\Bigr| \biggl|\int_0^t \bigl(e^{-\frac{1}{2}(c - \theta)(t-s)} - e^{-\frac{1}{2}(c + \theta)(t-s)}\bigr) dW^1_s \biggr| \\
    & \ \eqqcolon K_1^\eps+K_2^\eps + K_3^\eps + K_4^\eps(\omega),
\end{align*}
where the first inequality follows from the uniform bound $|\nabla_{q^1} U|\leq C_U$ and we have assumed that the initial datum is deterministic for simplicity (this is straightforwardly generalised).

Using the scaling behaviour~\eqref{eq:phase-scaling} of the constants involved it is easily checked that 
\begin{equation*}
 K_1^\eps +K_2^\eps \leq Ce^{-\frac{t}{2\eps}}\Bigl\{ \bigl|Q^{1}_0\bigr| + \bigl|P^{1}_0\bigr| \Bigr\}.   
\end{equation*}
For $K_3^\eps$ we have the following estimate for any $\omega\in \Omega$
\begin{align*}
    K_3^\eps &\leq \Bigl|\mfrac{a}{\theta}\Bigr|C_U \int_0^t\bigl[ e^{-\frac{1}{2}(c+ \theta)(t-s)} +e^{-\frac{1}{2}(c - \theta)(t-s)}\bigr]  ds 
    = \Bigl|\mfrac{a}{\theta}\Bigr|C_U \biggl[ \mfrac{2}{c+\theta}\bigl(1-e^{-\frac12(c+\theta)t}\bigr) +  \mfrac{2}{c-\theta}\bigl(1-e^{-\frac12(c-\theta)t}\bigr)\biggr]\\
    & \qquad\qquad \leq C\eps (1-e^{-\frac{t}{\eps}}) \leq C\eps \xrightarrow{\eps\to 0} 0.
\end{align*}
Note that so far the bounds are reminiscent of the bounds used to prove Theorem~\ref{prop:PS-Cons}. Nevertheless, the final stochastic integral term $K_4^\eps$ requires a different treatment. Using $\lambda_{\pm}\coloneqq c\pm \theta$ and applying integration by parts, we can rewrite $K^\eps_4(\omega)$ as 
\begin{equation}\label{eq:K4}
    K^\eps_4(\omega) = \Bigl|\mfrac{a \tilde c }{\theta}\Bigr| \biggl| - \int_0^t \bigl(\lambda_{-}e^{-\lambda_{-}(t-s)} + \lambda_+ e^{-\lambda_+(t-s)}\bigr) (W^1_t-W^1_s) ds+ W^1_t \bigl(e^{-\lambda_+ t}-e^{-\lambda_- t}\bigr)\biggr|.
\end{equation}
Note that $|\frac{a \tilde c}{\theta}|=O(1)$ and $\lambda_+,\lambda_->0$ with $\lambda_+,\lambda_=O(\frac1\eps)$. Therefore, we can estimate 
\begin{equation*}
    \bigl|W^1_t \bigl(e^{-\lambda_+ t}-e^{-\lambda_- t}\bigr)\bigr| \leq D(\omega)e^{-\frac{t}{\eps}}, 
\end{equation*}
where $\Omega\ni\omega\mapsto D(\omega)$ with $\sup_{s\in [0,t]}|W^1_s|\leq D(\omega)$, where $D(\omega)$ is almost surely finite. 

So to bound $K_4^\eps$, we only need to bound the two integral terms in~\eqref{eq:K4}, both of which are of the form
\begin{align}\label{eq:AuxStocInt}
    I(t,\omega) = \bigg|\int_0^t c(\eps)e^{-c(\eps)(t-s)}(W^1_t-W^1_s)ds\bigg|,
\end{align}
with $c(\eps)>0$ and $c(\eps)=O(\frac1\eps)$. Note that $s\mapsto W_s$ is uniformly continuous on $[0,t]$, and therefore  there exists a modulus of continuity given by
\begin{equation*}
    \eta(\tau)\coloneqq \sup_{|s-s'|\leq \tau} \big|W_s(\omega)-W_{s'}(\omega)\big|, \ \text{ with } \ |\eta(\tau)|\leq K(\omega)|\tau|^{\zeta}
\end{equation*}
for $\zeta<\frac12$, where the bound follows since Brownian motion is $\zeta$-H\"older continuous with $\zeta<\frac12$ and $K(\omega)$ is almost surely finite. Now we estimate $I(t,\omega)$ in~\eqref{eq:AuxStocInt} which in turn will provide bounds on $K^\eps_4$. A change of variables via $u=c(\eps)(t-s)$ followed by introducing (yet to be specified) $R(\eps)\leq c(\eps)t$ leads to 
\begin{align*}
    I(t,\omega) \leq \int_0^{c(\eps)t} e^{-u}\eta\biggl(\frac{u}{c(\eps)}\biggr)du &= \int_0^{R(\eps)}e^{-u}\eta\biggl(\frac{u}{c(\eps)}\biggr)du  + \int_{R(\eps)}^{c(\eps)t} e^{-u}\eta\biggl(\frac{u}{c(\eps)}\biggr)du \\
    &\leq \eta\bigg(\frac{R(\eps)}{c(\eps)}\bigg) \int_0^{R(\eps)}e^{-u}du+2D(\omega)\int_{R(\eps)}^\infty e^{-u}du\\
    &\leq  \eta\bigg(\frac{R(\eps)}{c(\eps)}\bigg) + 2D(\omega)e^{-R(\eps)}
\end{align*}
where the second inequality follows since $\tau\mapsto \eta(\tau)$ is non-decreasing and $|\eta(\tau)|\leq 2D(\omega)$. 

With the choice  $R(\eps)=\log(\frac1\eps)$, which clearly satisfies $R(\eps)\leq c(\eps)t$ for $t$ fixed and $\eps>0$ small enough, we find
\begin{align*}
    I(t,\omega) \leq (K(\omega)+2D(\omega)) \Big[ \big(\eps\log(\eps^{-1})\big)^\zeta+\eps \Big].
\end{align*}

Substituting this bounds back into~\eqref{eq:K4} and combining all bounds for $K^\eps_i$ with $i\in\{1,\ldots,4\}$ we arrive at the pathwise almost sure bounds
\begin{equation*}
    |Q^1_t(\omega)| \leq C\big[ e^{-\frac{t}{2\eps}}\big\{|Q_0^1|+|P_0^1|\big\} +  \eps + (K(\omega)+2D(\omega)) \big(\eps\log(\eps^{-1})\big)^\zeta  \big] \xrightarrow{\eps\to 0} 0 \ \text{almost surely},
\end{equation*}
for any $\zeta<\frac12$. Note that this estimate also leads to a pathwise estimate, i.e.\ with $\sup_{t\in [r,T]}$ in front with the constants $D,K$ appropriately adjusted and the exponential on the right-hand side replaced by $e^{-\frac{r}{2\eps}}$.  

\subsection[Pathwise estimate in L1 sense]{Pathwise $L^1$-estimate}
Repeating computations as in the proof of Proposition~\ref{prop:PS-Cons} (see Appendix~\ref{App:PS-cons-proof}), for any $r\in [0,T]$ it follows that
\begin{align}
    &\E\biggl[\sup_{t\in [r,T]}|Q^{1}_t|\biggr]    \leq C \biggl\{  \E \biggl[\sup_{t\in [r,T]}\Bigl|\Bigl[ \mfrac{1}{2}\Bigl(1-\mfrac{c}{\theta}\Bigr)e^{-\frac{1}{2}(c+\theta) t} + \mfrac{1}{2}\Bigl(1+\mfrac{c}{\theta}\Bigl)e^{-\frac{1}{2}(c-\theta) t} \Bigr] Q^{1}_0 - \mfrac{a}{\theta} \Bigl(e^{-\frac{1}{2}(c+\theta) t} - e^{-\frac{1}{2}(c-\theta) t}\Bigr)P^{1}_0\Bigr| \biggr] \notag\\ 
    & + \Bigl|\mfrac{a}{\theta}\Bigr|^2 \biggl( \int_0^T\bigl| e^{-\frac{1}{2}(c+ \theta)(t-s)} -e^{-\frac{1}{2}(c - \theta)(t-s)}\bigr| ds\biggr)^2  + \Bigl|\mfrac{a \tilde c }{\theta}\Bigr|^2 \E\bigg[\sup_{t\in[0,T]}\biggl|\int_0^te^{-\frac{1}{2}(c - \theta)(t-s)} - e^{-\frac{1}{2}(c + \theta)(t-s)} dW^1_s \biggr|\bigg] \biggr\} \notag\\
    & \qquad\qquad\qquad\eqqcolon C\left\{I_1^\eps+I_2^\eps+I_3^\eps\right\} 
\end{align}
By repeating the calculations from the proof of Proposition~\ref{prop:PS-Cons}, the bounds for $I_1^\eps$ and $I_2^\eps$ are given by
\begin{equation*}
    I_1^\eps+I_2^\eps \leq C\Big\{e^{-\frac{D}{\eps}r}\big(|Q^1_0|+|P^1_0|\big)+\eps\Big\}.
\end{equation*}
As in the almost sure proof, the harder term to estimate is the stochastic integral, which using $\lambda_{\pm}=c\pm \theta >0$ can be rewritten as
\begin{equation*}
    I_3^\eps \leq \Bigl|\mfrac{a \tilde c }{\theta}\Bigr|^2\bigg\{
    \E\bigg[\sup_{t\in[0,T]}\biggl|\int_0^te^{-\frac{1}{2}\lambda_-(t-s)}dW^1_s\bigg] + \E\bigg[\sup_{t\in[0,T]}\biggl|\int_0^te^{-\frac{1}{2}\lambda_+(t-s)}dW^1_s\bigg]
    \bigg\},
\end{equation*}
Note that the stochastic integral $\int_0^t e^{-\frac12 \lambda_\pm (t-s)}dW_s^1$ is an Ornstein-Uhlenbeck process and therefore by~\cite{GraversenPeskir00} there exist constants $C_1,C_2$ such that 
\begin{equation*}
    C_1 \sqrt{\lambda_{\pm}^{-1}}\log\bigg(1+\frac{T\lambda_\pm}{2}\bigg)\leq\E\bigg[\sup_{t\in[0,T]}\biggl|\int_0^te^{-\frac{1}{2}\lambda_\pm(t-s)}dW^1_s\bigg] \leq C_2 \sqrt{\lambda^{-1}_{\pm}}\log\bigg(1+\frac{T\lambda_\pm}{2}\bigg).
\end{equation*}
Since $\lambda_\pm \sim \eps^{-1}$, it follows that 
\begin{equation*}
     I_3^\eps \leq 2C_2\sqrt{\eps} \log\bigg(1+\frac{T}{2\eps}\bigg).
\end{equation*}
Combining all these bounds together, we arrive at the $L^1$-pathwise estimate 
\begin{equation*}
    \E\biggl[\sup_{t\in [r,T]}|Q^{1}_t|\biggr] \leq C\biggl\{e^{-\frac{D}{\eps}r}\big(|Q^1_0|+|P^1_0|\big)+\eps + \sqrt{\eps}\log\biggl(1+\frac{T}{2\eps}\biggr)\biggr\} \xrightarrow{\eps\to 0} 0, 
\end{equation*}
for any fixed $T>0$. Here the constant $C>0$ independent of $\eps$ and $T$.

\section{Example of spatial confinement and steady states}\label{sec:spatialEx}

In this appendix we discuss a few auxiliary results: firstly, a simple linear example with spatial confinement which clearly shows how oscillations arise in the constrained momentum, and secondly, we discuss the steady states in the strong confinement setting.   

\subsection[Oscillating constrained-momentum in spatial-confinement]{Oscillating constrained momentum in spatial-confinement} \label{app:oscP1}
Observe that Proposition~\ref{prop:SpatCons} does not provide results for the limiting behaviour of $P_t^{1}$ directly. As we now discuss, no well-defined limit exists for the fast momentum variable $P^1_t$.

In the explicit solution for $P_t^{1}$ (given in \eqref{eq:P1-sol}) none of the  cosine terms vanishes as $\eps \to 0$. Instead, we can prove that the time integrated $P_t^{1}$ vanishes as $\eps \to 0$. This, together with the first observation suggests, that $P_t^{1}$ undergoes increasingly fast oscillations as $\eps \to 0$. To support this claim, we provide the calculation for the simplest case in which everything is explicitly computable. 

Consider the following spatially constrained model for $(Q_t,P_t)\in \R\times\R$: 
\begin{align*}
    dQ^1_t &= P^1_t dt \\
    dP^1_t &=  - \biggl(1+\frac1\eps\biggr) Q^{1}_t  dt - \gamma P^1_t dt + \sqrt{2\gamma\beta^{-1}} \,dW^1_t,
\end{align*}
which corresponds to $V(q)=\frac{1}{2}|q|^2$, i.e.\ $(Q^1,P^1)$ and $(Q^2,P^2)$ are decoupled.  

In what follows we compute the statistics of $P^1_t$ to better understand the results of Proposition~\ref{prop:SpatCons}. 
To this end consider the explicit solution of $P^1_t$ which we compute using Proposition~\ref{cor:sol_underdamped} 
which leads to 
\begin{align*}
    P^1_t &=  - \frac{2(1+\eps)}{\eps|\theta|}  \sin\Bigl(t\mfrac{|\theta|}{2}\Bigr)e^{-\frac{\gamma}{2}t}Q^1_0 + \Bigl[\cos\Bigl(t\mfrac{|\theta|}{2}\Bigr) - \frac{\gamma}{|\theta|}\sin\Bigl(t \mfrac{|\theta|}{2}\Bigr)\Bigr]e^{-\frac{\gamma}{2}t}P^1_0 \\
    &\qquad+ \  \sqrt{2\gamma\beta^{-1}} \int_0^t \biggl( \cos\Bigl((t-s)\mfrac{|\theta|}{2}\Bigr) - \frac{\gamma}{|\theta|} \sin\Bigl((t-s)\mfrac{|\theta|}{2}\Bigr) \biggr) e^{-\frac{\gamma}{2}(t-s)} dW^1_s.
\end{align*}
The mean solves 
\begin{align*}
    m_t^\eps = \E[P^1_t] =  - \frac{2(1+\eps)}{\eps|\theta|}  \sin\Bigl(t\mfrac{|\theta|}{2}\Bigr)e^{-\frac{\gamma}{2}t} \mathbb{E}[Q^1_0] + \Bigl[\cos\Bigl(t\mfrac{|\theta|}{2}\Bigr) - \frac{\gamma}{|\theta|}\sin\Bigl(t \mfrac{|\theta|}{2}\Bigr)\Bigr]e^{-\frac{\gamma}{2}t}\mathbb{E}[P^1_0],
\end{align*}
which follows by noting that the expectation of the It\^o integral term is zero. 
We observe that:
\begin{enumerate}[label=(\roman*)]
    \item for any $t>0$, $m_t^\eps\to 0$ as $\eps\to 0$ if and only if $\E[Q^1_0]=\E[P^1_0]=0$; 
    \item for any $t>0$, $m_t^\eps$ diverges and becomes unbounded as $\eps\to 0$ if $\E[Q^1_0]\neq 0$, and $m_t^\eps$ diverges, but stays bounded if $\E[Q^1_0]=0$, $\E[P^1_0]\neq 0$;
    \item for any $\eps>0$, $m_t^\eps\to 0$ as $t\to\infty$ for any $Q^1_0,P^1_0\in\R$.
\end{enumerate}

Assuming that $\E[Q^1_0]=\E[P^1_0]=0$, which implies that $\E[P^1_t]=0$, the covariance $\Sigma_t^\eps \in \R$ is given by 
\begin{align*}
    \Sigma_t^\eps & = \E\Bigl[ \bigl( P^1_t - \E[P^1_t]\bigr)^2\Bigr] = \E\bigl[(P^1_t)^2\bigr] \\
    &= 2\gamma\beta^{-1} \int_0^t \biggl( \cos\Bigl((t-s)\mfrac{|\theta|}{2}\Bigr) - \frac{\gamma}{|\theta|} \sin\Bigl((t-s)\mfrac{|\theta|}{2}\Bigr) \biggr)^2 e^{-\gamma(t-s)} ds \\
    & = \beta^{-1}\biggl[ 1-\frac{e^{-\gamma t} \bigl[|\theta|^2+\gamma ^2-\gamma ^2 \cos (|\theta| t)-\gamma  |\theta| \sin (|\theta| t)\bigr]}{|\theta|^2}\biggr],
\end{align*}
where the fourth equality follows by It\^o isometry, and the final equality follows by explicit integration. We now observe that: 
\begin{enumerate}[label=(\roman*)]
    \item for any $t>0$, $\Sigma_t^\eps \to \beta^{-1}(1-e^{-\gamma t})$ as $\eps\to 0$;
    \item for any $\eps>0$, $\Sigma_t^\eps\to \beta^{-1}$ as $t\to \infty$.
\end{enumerate}

In conclusion, if $\E[Q^1_0]=\E[P^1_0]=0$
then
\begin{align*}
    \mathrm{law}(P^1_t) \to \begin{cases}
    \mathcal N\bigl(0,\beta^{-1}(1-e^{-t})\bigr)  &\text{ as }  \eps\to 0 ,\\ 
    \mathcal N(0,\beta^{-1}) &\text{ as }  t\to \infty,
    \end{cases} 
\end{align*}
and else if $\E[Q^1_0],\E[P^1_0]\neq 0$ then 
\begin{align*}
    \mathrm{law}(P^1_t)  
    \begin{cases}
    \text{diverges }  &\text{ as } \eps\to 0, \\
    \to \mathcal N(0,\beta^{-1}) &\text{ as } t\to\infty.
    \end{cases}
\end{align*}

The oscillatory behaviour of the momentum variable under spatial confinement is well-known from the classical literature on soft-constrained Hamiltonian systems~\cite{RubinUngar57,bornemann1997homogenization}. A key difference is that in the Hamiltoninan case, the fluctuation in the momentum $P^1_t$ is only seen if $\E[Q^1_0],\E[P^1_0]\neq 0$; in our case the momentum fluctuates (captured by the diverging sine and cosine terms) due to the additional noise term (which injects the required energy to constantly keep the system moving). 

\subsection{Steady state for spatially-confined Langevin dynamics}\label{sec:Steady}

In this section we discuss the $\eps\to 0$ limit of the steady states for the spatially confined Langevin dynamics, recall discussion at the end of Section~\ref{sec:spatial}. 
Throughout this discussion, we assume that $\alpha>0$ in the definition of the potential $V$ (recall~\eqref{eq:Valpha}), which along with the Lipschitz bounds on $V$ (recall~\eqref{ass:VLip}) ensures that the Boltzmann distribution $Z^{-1}e^{-\beta H}$, with normalisation constant $Z$ and corresponding Hamiltonian $H$, is the unique steady state of the (pre-limit) Langevin dynamics~\eqref{eq:intro-genLang}. 

Again, we make use of the coordinate-projection $\xi:\R^d\to\R^k$ defined as $\xi(q)=q^1$ with $\xi^{-1}(0)=\{q\in\R^d:\xi(q)=q^1=0\}$.  
The following proposition discusses the $\eps\to 0$ limit of $\mu^\eps$, defined in~\eqref{eq:SpatPreLimSte}, which is the steady state for the spatially confined Langevin dynamics~\eqref{eq:abc-slow},\eqref{eq:intro-Spat-Cons-Fast}.
\begin{prop} \label{prop:invmeas-spat}
The probability measure $\mu^\eps\in \mathcal P(\R^{2d})$ defined in~\eqref{eq:SpatPreLimSte} converges weakly to $\mu\in \mathcal P(\xi^{-1}(0)\times\R^d)$ given by 
\begin{align*}
    d\mu(q^2,p)\coloneqq \frac{1}{Z} \exp\biggl(-V(0,q^2) -\frac{|p|^2}{2}\biggr) dq^2 dp
\end{align*}
where $Z$ is the normalisation constant. 
In particular, for $f\in C_b(\R^d)$ with $f=f(q)$ we have
\begin{align*}
    \lim_{\eps\to 0} \int_{\R^{2d}}f(q)d\mu^\eps(q,p) = \int_{\R^{d-k}} f(0,q^2) \frac{1}{\hat Z} e^{-V(0,q^2)}dq^2,
\end{align*}
with $\hat Z\coloneqq \int_{\R^{d-k}}e^{-V(0,q^2)}dq^2$. 
Furthermore, the limit~\eqref{eq:limit-gen} of $(Q^2,P^2)$ with $\hat q=0$  admits the $(q^2,p^2)$-marginal of $\mu$, i.e.\ $\displaystyle\int_{\R^k}\mu\,dp^1$, as a steady state (cf.Proposition~\ref{prop:Behav-Q2P2}).
\end{prop}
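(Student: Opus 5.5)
The plan is to prove weak convergence by testing $\mu^\eps$ against $g\in C_b(\R^{2d})$ and factorising the density. Since the Hamiltonian is separable in $q$ and $p$, $\mu^\eps$ is a product of a $p$-marginal $Z_p^{-1}e^{-|p|^2/2}\,dp$ (independent of $\eps$) and a $q$-marginal $\nu^\eps(dq)\propto e^{-V(q)-\frac{1}{2\eps}|q^1|^2}dq$. It therefore suffices to show that $\nu^\eps$ converges weakly to $\delta_0(dq^1)\otimes \hat Z^{-1}e^{-V(0,q^2)}dq^2$. Weak convergence of the product then follows, e.g.\ by Fubini and dominated convergence in the $p$-variable, or by tightness together with convergence on product test functions.

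\textbf{Step 1: rescaling the $q$-marginal.} For the $q$-marginal I substitute $q^1=\sqrt{\eps}\,y$. This gives
\[
\int f(q)\,e^{-V(q)-\frac{1}{2\eps}|q^1|^2}dq=\eps^{k/2}\int_{\R^k}\int_{\R^{d-k}} f(\sqrt\eps y,q^2)\,e^{-V(\sqrt\eps y,q^2)}e^{-|y|^2/2}\,dq^2dy .
\]
The prefactor $\eps^{k/2}$ cancels against the same factor in $Z^\eps$, which is the above expression with $f\equiv1$. For fixed $(y,q^2)$ the integrand converges to $f(0,q^2)e^{-V(0,q^2)}e^{-|y|^2/2}$ by continuity of $f$ and $V$. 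Integrating out $y$ then yields the Gaussian constant $(2\pi)^{k/2}$ both in the numerator and in the denominator, which leaves the claimed limit $\hat Z^{-1}\int f(0,q^2)e^{-V(0,q^2)}dq^2$.

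\textbf{Step 2: the integrable majorant (main obstacle).} To justify dominated convergence I need a bound on $e^{-V(\sqrt\eps y,q^2)}$ uniform in $\eps\le1$. Here I use \eqref{eq:Valpha} with $\alpha>0$. Assumption \ref{item:assVLip} gives $|\nabla U(q)|\le |\nabla U(0)|+L_U|q|$, hence $U(q)\ge U(0)-|\nabla U(0)||q|-\frac{L_U}{2}|q|^2$. This lower bound alone does not beat $\frac{\alpha}{2}|q|^2$ unless $L_U<\alpha$. I therefore use instead $V\ge0$ from \eqref{eq:Valpha} together with Assumption~\ref{item:assVbound} in the $q^1$-direction:
\[
V(\sqrt\eps y,q^2)\ge V(0,q^2)-\sqrt\eps\,(\alpha|y|\cdot 0+C_U|y|)-\text{(quadratic in }\sqrt\eps y),
\]
since $\frac{\alpha}{2}|\sqrt\eps y|^2\ge0$ and $|U(\sqrt\eps y,q^2)-U(0,q^2)|\le C_U\sqrt\eps|y|$. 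This yields the majorant $e^{-V(0,q^2)}e^{C_U|y|}e^{-|y|^2/2}$, which is integrable provided $e^{-V(0,\cdot)}\in L^1(\R^{d-k})$. That integrability is exactly the normalisability of the limit, and it is what the paper asserts follows from $\alpha>0$ and the assumptions via \eqref{eq:genVAss}. I will invoke that remark; if necessary I will instead argue directly that $V(0,q^2)$ grows superlinearly. Dominated convergence applies to both numerator and denominator, and $Z^\eps\varepsilon^{-k/2}$ converges to a positive limit, so the ratio converges.

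\textbf{Step 3: invariance of the marginal.} For the final claim I note that the limit dynamics \eqref{eq:limit-gen} with $\hat q=0$ is an underdamped Langevin equation with potential $\tilde V(q^2)=V(0,q^2)$. Its gradient $\nabla_{q^2}\tilde V$ is Lipschitz and $e^{-\tilde V}$ is integrable. The standard computation then applies: the Liouville (Hamiltonian) part annihilates $e^{-\tilde H}$, and the Ornstein--Uhlenbeck part in $p^2$ has $e^{-|p^2|^2/2}$ as an invariant density. Hence $L^*\bigl(e^{-\tilde V(q^2)-|p^2|^2/2}\bigr)=0$, and the normalised density is stationary. By the product structure of $\mu$, this is exactly the $(q^2,p^2)$-marginal of $\mu$.
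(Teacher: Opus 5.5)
Your argument is correct and follows the paper's route: factor off the Gaussian in $p$, rescale $q^1=\sqrt{\eps}\,y$, cancel $\eps^{k/2}$ against $Z^\eps$, and pass to the limit by dominated convergence. The one substantive difference is the majorant, and yours is the better-justified one.

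\textbf{The majorant.} The paper dominates the integrand by $Ce^{-\frac{\alpha}{2}|q^2|^2}$. That bound tacitly requires $U$ to be bounded below, which is not among the stated assumptions. As you note yourself, the Lipschitz lower bound on $U$ does not beat $\frac{\alpha}{2}|q|^2$ unless $L_U<\alpha$. Your majorant $e^{-V(0,q^2)}e^{C_U|y|}e^{-|y|^2/2}$ comes from Assumption~\ref{item:assVbound} via
$V(\sqrt\eps y,q^2)=V(0,q^2)+\frac{\alpha\eps}{2}|y|^2+U(\sqrt\eps y,q^2)-U(0,q^2)\ge V(0,q^2)-C_U\sqrt\eps\,|y|$.
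It is uniform in $\eps\le 1$ and needs only $\hat Z<\infty$. Please clean up your displayed inequality to match this: the quadratic term enters with a plus sign and is simply dropped, and $V\ge 0$ is never used.

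\textbf{Stationarity.} You also verify that the $(q^2,p^2)$-marginal is stationary for \eqref{eq:limit-gen}. The paper's proof does not address this clause at all.

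\textbf{Caveat on $\hat Z<\infty$.} Do not take this from the remark in Section~\ref{sec:Ass-Pot}. Under \ref{item:assVLip}--\ref{item:assVbound}, $\alpha>0$ and $V\ge 0$ alone it is false: $U(q)=-\frac{\alpha}{2}|q^2|^2$ satisfies everything and gives $V(0,\cdot)\equiv 0$. For the same reason your fallback (``show $V(0,q^2)$ grows superlinearly'') is not available. Derive it instead from the hypothesis that $\mu^\eps$ is a probability measure. The same estimate in the other direction gives $V(q^1,q^2)\le V(0,q^2)+\frac{\alpha}{2}|q^1|^2+C_U|q^1|$, hence
$Z^\eps\ge (2\pi)^{d/2}\,\hat Z\int_{\R^k}e^{-(\frac{\alpha}{2}+\frac{1}{2\eps})|q^1|^2-C_U|q^1|}\,dq^1$.
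So $Z^\eps<\infty$ forces $\hat Z<\infty$, and the limit $\eps^{-k/2}Z^\eps\to(2\pi)^{d/2}\hat Z$ is finite and positive.

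\textbf{The factor $\beta$.} Like the paper's proof, you have silently set $\beta=1$. This is needed for your claim that $e^{-|p^2|^2/2}$ is invariant for the Ornstein--Uhlenbeck part of \eqref{eq:limit-gen}. For general $\beta$, rescale $q^1=\sqrt{\eps/\beta}\,y$ and carry $\beta$ through the limit density and the invariance computation; nothing else changes.
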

\begin{proof}
We first note that 
\begin{align*}
    Z^\eps &= \int_{\R^{2d}} \exp\biggl(- V(q) - \frac{1}{2\eps}|q^1|^2 - \frac{1}{2}|p|^2\biggr) dqdp = (2\pi)^\frac{d}{2} \int_{\R^d}\exp\biggl(- V(q) - \frac{1}{2\eps}|q^1|^2 \biggr) dq \\
    & = (2\pi)^\frac{d}{2} \int_{\R^k} \biggl( \int_{\xi^{-1}(0)}e^{-V(q^1,q^2)}dq^2\biggr) e^{-\frac{1}{2\eps}|q_1|^2}dq^1
\end{align*}
where the final equality follows by Fubini's theorem since $e^{-\frac{1}{2\eps}|q^1|^2}\leq 1$ and $e^{-V}\in L^1(\R^{d})$. Therefore, using the variable transformation $\sqrt{\eps}u=q^1$, we have 
\begin{align*}
    \frac{Z^\eps_{\Xi_1}}{(2\pi\eps)^\frac{k}{2}} = \frac{(2\pi)^{\frac{d}{2}}}{(2\pi)^{\frac{k}{2}}} \int_{\R^k} \biggl( \int_{\xi^{-1}(\sqrt{\eps}u)} e^{-V(\sqrt{\eps u},q^2)}dq^2 \biggr) e^{-\frac12 |u|^2}du = \frac{(2\pi)^{\frac{d}{2}}}{(2\pi)^{\frac{k}{2}}} \int_{\R^k} h^{\eps}(u) e^{-\frac12 |u|^2}du,
\end{align*}
where $h^\eps$ is inner integral
\begin{align}\label{eq:SteSta-limit-Spat-DCT}
    h^\eps(u) = \mathds{1}_{\{\eta = \sqrt{\eps u}\}} 
    \int_{\xi^{-1}(\eta)} e^{-V(\eta, q^2)}  dq^2 \xrightarrow{\eps\to 0} \int_{\xi^{-1}(0)} e^{-V(0,q^2)}dq^2.
\end{align}
for any $\eta\in \R^{k}$
The limit above follows by dominated convergence theorem since $e^{-V(\sqrt{\eps} u,q^2)}\to e^{-V(0,q^2)}$ for any $u\in \R^k$ and $q^2\in\R^{d-k}$, and $|\mathds{1}_{\{\eta = \sqrt{\eps u}\}}e^{-V(\eta,q^2)}| \leq C e^{-\frac{\alpha}{2}|q^2|^2}\in L^1(\R^{d-k})$ for a constant $C>0$. The dominating function follows since $\alpha>0$; recall the definition of the potential $V$ is \eqref{eq:Valpha}.

Consequently, we arrive at
\begin{align*}
    \frac{Z^\eps_{1}}{(2\pi\eps)^\frac{k}{2}} \xrightarrow{\eps \to 0} \frac{(2\pi)^{\frac{d}{2}}}{(2\pi)^{\frac{k}{2}}} \int_{\R^k} \biggl( \int_{\xi^{-1}(0)} e^{-V(0,q^2)}dq^2 \biggr) e^{-\frac12 |u|^2}du = (2\pi)^{\frac{d}{2}} \int_{\xi^{-1}(0)} e^{-V(0,q^2)}dq^2.
\end{align*}
By using a similar argument, for any $f\in C_b(\R^{2d})$ we find
\begin{align*}
    \frac{1}{(2\pi\eps)^\frac{k}{2}} \int_{\R^{2d}} f(q,p) d\mu_{1}(q,p) \xrightarrow{\eps\to 0} \int_{\xi^{-1}(0)\times \R^{d}} f(0,q^2,p) e^{-V(0,q^2)-\frac{1}{2}|p|^2} dq^2dp,
\end{align*}
which leads to the required result. 
\end{proof}

\end{appendices}

{\small
\bibliographystyle{alphainitials.bst}
\bibliography{sample}
}
\end{document}